\numberwithin{equation}{section}
\theoremstyle{plain}
\newtheorem{theorem}[equation]{Theorem}
\newtheorem{lemma}[equation]{Lemma}
\newtheorem{proposition}[equation]{Proposition}
\newtheorem{corollary}[equation]{Corollary}
\theoremstyle{definition}
\newtheorem{definition}[equation]{Definition}
\newtheorem{example}[equation]{Example}
\newtheorem{examples}[equation]{Examples}
\newtheorem{remark}[equation]{Remark}
\setlist[enumerate]{label=(\arabic*), leftmargin=*}
\setlist[itemize]{label=$\vcenter{\hbox{\footnotesize$\bullet$}}$, leftmargin=*}
\newcommand{\mb}[1]{\mathbf{#1}}
\newcommand{\mm}[1]{\mathrm{#1}}
\newcommand{\ul}[1]{\underline{#1}}
\newcommand{\id}{ \mathrm{id}}
\newcommand{\cat}[1]{
\StrLen{#1}[\mystrlen]
\ifnum\mystrlen=1 \mathscr{#1}
\else \mathrm{#1}
\fi}
\newcommand{\scat}[1]{\mb{#1}}
\newcommand{\Hom}[0]{\mm{Hom}}
\newcommand{\op}[0]{\mm{op}}
\newcommand{\DR}{\mathrm{DR}}
\newcommand{\cA}{\mathcal{A}}
\newcommand{\cC}{\mathcal{C}}
\newcommand{\mytag}[2]{%
  \text{#1}%
  \@bsphack
  \begingroup
    \@onelevel@sanitize\@currentlabelname
    \edef\@currentlabelname{%
      \expandafter\strip@period\@currentlabelname\relax.\relax\@@@%
    }%
    \protected@write\@auxout{}{%
      \string\newlabel{#2}{%
        {#1}%
        {\thepage}%
        {\@currentlabelname}%
        {\@currentHref}{}%
      }%
    }%
  \endgroup
  \@esphack
}
\title{Calabi--Yau structures on (quasi-)bisymplectic algebras}
\author[]{
	Tristan Bozec\thanks{IMAG, Univ. Montpellier, CNRS, Montpellier, France \\
											\href{mailto:tristan.bozec@umontpellier.fr}{tristan.bozec@umontpellier.fr}}, 
	Damien Calaque\thanks{IMAG, Univ. Montpellier, CNRS, Montpellier, France \\
											\href{mailto:damien.calaque@umontpellier.fr}{damien.calaque@umontpellier.fr}}, 
	Sarah Scherotzke\thanks{Mathematical Institute, University of Luxembourg, Luxembourg \\
											\href{mailto:sarah.scherotzke@uni.lu}{sarah.scherotzke@uni.lu}}}
\date{}
\begin{document}

\maketitle

\begin{abstract}
We show that relative Calabi--Yau structures on noncommutative moment maps give rise to (quasi-)bisymplectic structures, as introduced by 
Crawley-Boevey--Etingof--Ginzburg (in the additive case) and Van den Bergh (in the multiplicative case). 
We prove along the way that the fusion process (a) corresponds to the composition of Calabi--Yau cospans with ``pair-of-pants'' ones, 
and (b) preserves the duality between non-degenerate double quasi-Poisson structures and quasi-bisymplectic structures. 

As an application we obtain that Van den Bergh's Poisson structures on the moduli spaces of
representations of deformed multiplicative preprojective algebras coincide with 
the ones induced by the $2$-Calabi--Yau structures on (dg-versions of) these algebras. 
\end{abstract}

\setcounter{tocdepth}{2}
\tableofcontents


\section{Introduction}

Throughout this paper $k$ is a field of characteristic zero. 

\subsubsection*{Noncommutative algebraic geometry}

The Kontsevich--Rosenberg principle of noncommutative algebraic geometry says that a structure on an associative algebra $A$ has a (noncommutative) geometric 
meaning whenever it induces a genuine corresponding geometric structure on representation spaces. This principle led to the discovery of bisymplectic 
structures~\cite{CBEG}, double Poisson and double quasi-Poisson structures~\cite{VdB}, and quasi-bisympletic structures~\cite{VdB2} on smooth algebras such that the 
associated representation spaces are respectively hamiltonian $GL_n$-varieties, Poisson and quasi-Poisson $GL_n$-varieties, and quasi-hamiltonian 
$GL_n$-varieties. 

It turns out that the fusion procedure for (quasi-)hamiltonian spaces from~\cite{AKSM,AMM} has a noncommutative counterpart~\cite{VdB,VdB2} (also called fusion). 
This in particular allows to construct quasi-bisymplectic structures on (localisations of) path algebras of quivers by starting from several copies of $A_2$ and repeatedly 
applying the fusion procedure. Ultimately, this provides a construction of symplectic structures~\cite{Yama} on multiplicative quiver varieties~\cite{CBS}. 
\[
\begin{array}{|c|c|}
\hline
\mathbf{Noncommutative~algebra} & \mathbf{Algebraic~geometry}\\
\hline \hline
\textrm{Smooth~algebra~}A & \textrm{Representation~variety}~\mathbf{Rep}(A)\\ 
\hline
\textrm{Bisymplectic~algebras} & \textrm{Hamiltonian~}GL\textrm{-spaces}\\ 
\hline
\textrm{Quasi-bisymplectic~algebras} & \textrm{Quasi-hamiltonian~}GL\textrm{-spaces}\\ 
\hline
\textrm{Fusion} & \textrm{Fusion}\\ 
\hline
\end{array}
\]

\subsubsection*{Derived symplectic geometry}

Hamiltonian and quasi-hamiltonian spaces actually find a nice interpretation (see~\cite{Cal,Saf})
in the realm of shifted symplectic and lagrangian structures from~\cite{PTVV}: moment maps as well as their multiplicative analogs 
naturally lead to lagrangian morphisms, and both the reduction and the fusion procedures can be understood in terms of derived intersections of these. 
\[
\begin{array}{|c|c|}
\hline
\mathbf{Algebraic~geometry} & \mathbf{Derived~geometry} \\
\hline \hline
G\ \rotatebox[origin=c]{-90}{$\circlearrowright$}\ X & \textrm{Quotient stack}~[X/G]\\ 
\hline
\textrm{Hamiltonian~}G\textrm{-space~}X & \textrm{Lagrangian~morphism}~[X/G]\to [\mathfrak{g}^*/G]\\ 
\hline
\textrm{Quasi-hamiltonian~}G\textrm{-space~}X & \textrm{Lagrangian~morphism}~[X/G]\to [G/G]\\ 
\hline
\textrm{Reduction} & \textrm{Lagrangian~intersection}\\ 
\hline
\textrm{Fusion} & \textrm{Composing~Lagrangian~correspondences}\\ 
\hline
\end{array}
\]

\subsubsection*{Calabi--Yau structures}

More recently, absolute and relative Calabi--Yau structures~\cite{BD1} have turned out to be accurate noncommutative analogs of shifted symplectic and lagrangian 
structures~\cite{BD2,ToCY}, \textit{via} the moduli of object functor $\mathbf{Perf}$ from~\cite{ToVa}. 
\[
\begin{array}{|c|c|}
\hline
\textbf{Higher~algebra} & \mathbf{Derived~geometry} \\
\hline \hline
\textrm{Finite~type~dg-category~}\mathcal C & \textrm{Derived~Artin~stack~}\mathbf{Perf}(\mathcal C)\\ 
\hline
\textrm{Shifted~Calabi--Yau~structure} & \textrm{Shifted~symplectic structure}\\ 
\hline
\textrm{Relative~Calabi--Yau~structure} & \textrm{Lagrangian~structure}\\ 
\hline
\textrm{Calabi--Yau~pushout} & \textrm{Lagrangian~intersection}\\ 
\hline
\textrm{Composing~Calabi--Yau~cospans} & \textrm{Composing~Lagrangian~correspondences}\\ 
\hline
\end{array}
\]
It is therefore natural to wonder whether Calabi--Yau structures are hidden behind the (quasi-)bisymplectic ones aforementioned. 
More specifically, in our previous work~\cite{BCS,BCS2}, we constructed relative Calabi--Yau structures on (multiplicative) noncommutative moment maps 
$k[x^{(\pm1)}]\to A$ for (multiplicative) preprojective algebras associated with quivers, leading in particular to an alternative construction 
of symplectic structures on multiplicative quiver varities. Exhibiting a direct connection between Calabi--Yau and (quasi-)bisymplectic structures will then help 
identifying the induced symplectic structures on multiplicative quiver varities from both approaches. 

\subsubsection*{Results}

In a very satisfactory manner, relative Calabi--Yau structures on noncommutative moment maps do induce (quasi-)bisymplectic ones: the additive version is proved 
by our first main result~\cref{rcybisym}, the multiplicative one is given by~\cref{thmcyqbs}. 
The rough idea in each case is that the Calabi--Yau structure on $k[x^{(\pm1)}]\to A$ is given by a family of noncommutative forms 
$\omega_n\in\Omega^{2n}A$, $n\ge1$, satisfying conditions implying the required ones for the $2$-form $\omega_1$ to define a (quasi-)bisymplectic structure 
on $A$. In particular, non-degeneracy on the Calabi--Yau side implies non-degeneracy on the (quasi-)bisymplectic side.

We moreover prove that we retrieve for quivers the very same structures exhibited in~\cite{CBEG,VdB}, in~\cref{exqcbeg} in the additive case, and in a way more involved way in~\cref{subsecexqq} in the multiplicative case. This requires to work on the elementary $A_2$ quiver as well as on the correct realization of fusion in the framework of Calabi--Yau cospans. We need for the latter to prove in~\cref{section: fusion} (along with~\cref{fusthmadd} and~\cref{1cyqbs}) that fusion actually corresponds to composition of relative Calabi--Yau structures with a particular Calabi--Yau cospan studied in~\cite{BCS2}, the ``pair-of-pants'' one, that is\[
k[x^{(\pm1)}] \amalg k[y^{(\pm1)}]  \longrightarrow  k\langle x^{(\pm1)} ,y^{{(\pm1)}} \rangle\longleftarrow k[z^{(\pm1)}]\]
where $z$ is mapped to $x+y$ in the additive version, $xy$ in the multiplicative one.
\[
\begin{array}{|c|c|}
\hline
\textbf{Higher~algebra} & \mathbf{Noncommutative~algebra} \\
\hline \hline
\textrm{Finite~linear~category~}\mathcal C & \textrm{Path~algebra~}A_{\mathcal C}\\ 
\hline
\textrm{Object~}i & \textrm{Primitive~idempotent~}e_i \\
\hline
\textrm{Calabi--Yau~functor~}\coprod_{i}k[x_i]\to\mathcal C & 
\begin{matrix}\textrm{Bisymplectic~structure,} \\ \textrm{with~moment~map~}k[x]\to A_{\mathcal C}\end{matrix}\\ 
\hline
\textrm{Calabi--Yau~functor~}\coprod_{i}k[x_i^{\pm1}]\to\mathcal C& 
\begin{matrix}\textrm{Quasi-bisymplectic~structure,} \\ \textrm{with~moment~map~}k[x^{\pm1}]\to A_{\mathcal C}\end{matrix}\\ 
\hline
\textrm{Pushing-out~along~the~``pair-of-pants''} & \textrm{Fusion}\\ 
\hline
\end{array}
\]

We want to emphasize that~\cref{secvyqb} contains what can be understood as the quasi-bisymplectic side of the fusion calculus for double 
quasi-Poisson algebra~\cite[\S5.3]{VdB}. 
Indeed, we know thanks to~\cite{VdB2} that quasi-bisymplectic structures correspond to non-degenerate double quasi-Poisson ones, 
and we produce in~\cref{fusionqH} the formula for fusion of quasi-bisymplectic structures, a noncommutative analog of~\cite[Proposition 10.7]{AKSM}. 
Because of this compatibility we do not use in this paper double quasi-Poisson structures, but prove that in the quiver case the structures we get give back Van den 
Bergh's double quasi-Poisson structures from~\cite{VdB}.

The last essential step for completeness is to check that when considering representation spaces, all these constructions yield the same symplectic structures, which 
is proved by our last main result,~\cref{complagthm}. We prove specifically that the lagrangian structures induced by quasi-Hamiltonian ones thanks 
to~\cite{VdB} on the one hand, and by relative Calabi--Yau ones~\cite{BD2} on the other hand are indeed the same. This achieves to prove the conjectural 
program established in the open questions concluding~\cite{BCS2} - except the last part which is rather independent.

\subsection*{Outline of the paper}

In \cref{section2}, we recall the mixed structure on the graded vector space of noncommutative differential forms on an associative $k$-algebra, which 
yields a convenient construction of Hochschild and negative cyclic homology as shown by Ginzburg--Schedler~\cite{GiSc}. We consider the example of 
$A=k[x^\pm]$ and identify the noncommutative differential form that yields the $1$-Calabi--Yau structure from~\cite{BCS2}. 

In \cref{section: fusion}, we compare the fusion process introduced by Van den Bergh~\cite{VdB} with certain pushouts of categories involving the pair-of-pants 
cospan studied in~\cite{BCS2}. Fusion has been introduced in order to glue idempotents in double (quasi-)Poisson algebras but in this section we only focus on the 
algebra structure and not on double brackets. Along the way, we show that the fusion of a $1$-smooth (or formally smooth, see~\cref{1sm}) algebra is $1$-smooth. 

The fourth section can be considered as an additive warm up for the next one. We show that relative Calabi--Yau structures on additive noncommutative moment 
maps induce bisymplectic structures. Bisymplectic structures where first defined in~\cite{CBEG} and are dual to non-degenerate double Poisson structures 
from~\cite{VdB}. We introduce, in analogy with Van den Bergh's fusion of double Poisson structures, the fusion of bisymplectic structures and show that 
it corresponds to composition with the additive pair-of-pants cospan from~\cite{BCS2}. Furthermore, we show that the fusion process respects the duality between 
bisymplectic and double Poisson structure in the sense that a compatible pair of bisymplectic and double Poisson structures is sent by fusion to another compatible pair 

In \cref{secvyqb} we prove that relative Calabi--Yau structures on multiplicative noncommutative moment maps induce quasi-bisymplectic structures in 
the sense of~\cite{VdB2}. Then we prove that the fusion of quasi-bisymplectic structures is induced by the composition of Calabi--Yau cospans with the multiplicative 
pair-of-pants, and that it is compatible with the duality between quasi-bisymplectic and double quasi-Poisson structures. 
We also show that in the case of multiplicative quiver varieties, the Calabi--Yau structure exhibited in~\cite{BCS2} is compatible with the non-degenerate 
double quasi-Poisson structure defined in~\cite{VdB2}.

Finally in the last section, we study the geometries induced by the aforementioned structures on representation spaces $X_V=\mathrm{Rep}(A,V)$ of 
algebras $A$ in vector spaces $V$. Namely, assuming that we have a Calabi--Yau structure on $\coprod_{i\in I}k[x^{\pm1}] \to \cC$, with $A_\cC=A$, 
we know thanks to~\cite{BD2} that it induces a lagrangian structure on $[X_V/\mathrm{GL}_V]\to[\mathrm{GL}_V/\mathrm{GL}_V]$. We also know 
that the double quasi-Poisson structure induced by 
our previous section yields a quasi-Hamiltonian structure on $X_V$ (in the sense of~\cite{AMM}), and therefore a lagrangian structure on the very same morphism. 
We prove that these two lagrangian structures match.

\subsubsection*{Related works}

A systematic comparison of noncommutative differential forms with Hochschild and cyclic complexes have been achieved by Yeung in~\cite{Yeung}.
There the author uses~\cite{GiSc0}, while we rely on~\cite{GiSc}.
We should also mention Pridham's~\cite{Pridham}, where a systematic way of producing shifted bisymplectic (resp.~bilagrangian) structures out of
absolute (resp.~relative) Calabi--Yau structures (see Proposition 1.24 and Theorem 1.56 in~\cite{Pridham}). One may be able to recover some of the
results of the present paper using Pridham's general theory (but it would probably require as much work as here to derive these results from~\cite{Pridham}).

\subsection*{Acknowledgements}

We thank Maxime Fairon for discussions about double brackets. We also learned a lot about those during the Villaroger 2021 
workshop on double Poisson structures, of which we thank all the participants. 
The first and second author have received funding from the European Research Council (ERC) under the
European Union's Horizon 2020 research and innovation programme (Grant Agreement No.\ 768679).


\section{Cyclic and noncommutative de Rham mixed complex}\label{section2}

In this section we first briefly recall some facts about Hochschild and negative cyclic homology, and then some constructions and results from~\cite{GiSc}.  
In particular, in~\cite{GiSc} Ginzburg and Schedler directly relate the negative cyclic homology of a unital algebra with the cohomology of a 
complex that is obtained from the mixed complex of noncommutative differential forms~\cite{Kar} on this algebra. We finally exhibit a closed 
noncommutative form representing the class in negative cyclic homology which defines the $1$-Calabi-Yau structure on $k[x^{\pm1}] $ in \cite{BCS2}.  

\subsection{Hochschild and negative cyclic homology}

We denote by $\cat{Mod}_k$ the category of \emph{chain} complexes over $k$. 
We warn the reader that we use the homological grading instead of the cohomological grading used in our previous papers~\cite{BCS,BCS2}. 
In particular, differentials have degree $-1$ while mixed differential have degree $+1$. 
Apart from this change, all along this paper we borrow the convention and notation from \textit{op. cit.}, to which we refer for more details. 
For instance, whenever $\cat{M}$ is a model category we write $\scat{M}$ 
for the corresponding $\infty$-category obtained by localizing along weak equivalences. 

A \textit{dg-category} is a $\cat{Mod}_k$-enriched category and the category of dg-categories with dg-functors is denoted by $\cat{Cat}_k$. 
We refer to \cite{KellerDG,ToDG2} for a detailed introduction to dg-categories and their homotopy theory.  
The \textit{Hochschild chains} $\infty$-functor is then defined as 
\[
\cat{HH}\,:\,\scat{Cat}_k\longrightarrow \scat{Mod}_k\,;\,
\cC\longmapsto \cC\underset{\cC^e}{\overset{\mathbb{L}}{\otimes}}\cC^{\op}\,,
\]
where $\cC^e:=\cC\otimes\cC^{\op}$. We write $\cat{HH}_{i}(\cC)$ for the $i$-th homology of $\cat{HH}(\cC)$. 

There is an explicit description of the derived tensor product $\cC\underset{\cC^e}{\overset{\mathbb{L}}{\otimes}}\cC^{\op}$, which uses the 
normalized bar resolution of $\cC$ as a $\cC$-bimodule, and that leads to standard normalized Hochschild chains, that we denote $\big(C_{*}(\cC),b\big)$: 
\[
C_{*}(\cC)=\bigoplus_{\substack{n\geq0 \\ a_0,\dots, a_n\in\mathrm{Ob}(\cC)}}
\cC(a_n,a_0)\otimes\bar\cC(a_{n-1},a_n)\otimes\cdots\otimes \bar\cC(a_1,a_2)\otimes \bar\cC(a_0,a_1)[-n], 
\]
with $\bar\cC(a,a')=\cC(a,a')$ if $a\neq a'$ and $\bar\cC(a,a)=\cC(a,a)/k\cdot\mathrm{id}_a$. 

\medskip

Hochschild chains carry a mixed structure, that is given on the standard normalized model by Connes's $B$-operator. 
We refer to \cite{BCS,BCS2} and references therein, for the homotopy theory of mixed complexes, and explicit formulas\footnote{Beware of the change of 
(co)homological grading convention, though. }. 
The negative cyclic complex of $\cat{C}$, denoted by $\cat{HC}^-(\cat{C})$, is defined as the homotopy fixed points of $\cat{HH}(\cC)$ with respect 
to the mixed structure; it comes with a natural transformation $(-)^\natural: \cat{HC}^- \Rightarrow \cat{HH}$. 
In concrete terms, $\cat{HC}^-(\cat{C})$ is given by $\big(C_{*}(\cC)[\![u]\!],b-uB\big)$, where $u$ is a degree $-2$ variable. 

\medskip

We can view every dg-algebra with a finite set $(e_i)_{i\in I}$ of orthogonal nonzero idempotents such that $1=\sum_{i\in I}e_i$ as a dg-category 
with object set $I$. Conversely, we can associate to every dg-category $\cC$ with finitely many objects its \textit{path algebra} given by the complex 
\[
A_{\cC}:=\bigoplus_{(a, b) \in Ob(\cC)\times Ob(\cC)} \cC(a, b)
\]
with product given by composition of morphisms. The dg-algebra $A_{\cC}$ is an $R$-algebra, where $R=\oplus_{c\in Obj(c)}ke_c$
Note that the construction is in general not functorial, meaning that a functor does not necessary give a morphism between the corresponding dg-algebras 
(unless the functor is injective on objects). This can be seen very easily on the following example, which will play an important role in the next section. 
\begin{example}
The dg-category coproduct $k\coprod k$ is the dg-category given by two objects $1$ and $2$ and endomorphism ring $k=\mathrm{End}(1)$ 
respectively $k=\mathrm{End}(2)$ at each object but zero Hom-spaces between the two objects. Hence its path algebra $A_{ k \coprod k}$ is isomorphic to 
$k \oplus k$. There is a dg-functor 
\[
k \coprod k \to k 
\]
sending $1$ and $2$ to $pt$, which denotes the only object of $k$, but there is no map of $k$-linear dg-algebras $ k \oplus k \to  k$. 
\end{example} 
Nevertheless, $\cC$ and $A_\cC$ are Morita equivalent, so that their Hochschild (resp.~negative cyclic) homology are isomorphic. 
More precisely, we have an inclusion of mixed complexes $\big(C_{*}(\cC),b,B\big)\hookrightarrow \big(C_{*}(A_\cC),b,B\big)$, which is a weak equivalence 
(here we view $A_\cC$ as a dg-category with one object). 

\subsection{Noncommutative forms}

Consider a unital associative $k$-algebra $A$, along with a subalgebra $R$.
We fix a complementary subspace $\bar A\simeq A/R$ of $R$. Denote by $d:A\to\bar A$ the associated quotient map. 
We will systematically use the $\bar{~}$ notation for the quotient by $R$.
The graded algebra $\Omega_R^* A$ of noncommutative differential forms is defined as the quotient of $T_R(A\oplus \bar A[-1])$ by the relations
\[
a\otimes b=ab\qquad\text{and}\qquad d(ab)=a\otimes d(b)+d(a)\otimes b
\]
for every $a,b\in A$. It comes equipped with a mixed differential, that is the derivation induced by $d$, and that we denote by the same symbol. 
The mixed differential $d$, descends to the \emph{Karoubi--de Rham} graded vector space $\mathrm{DR}^*_R A:=\Omega^*_R A/[\Omega^*_R A,\Omega^*_R A]$, 
first introduced in~\cite{Kar}. 

\medskip

In order to define a differential on $\Omega^*_R A$, turning it into a mixed complex, we consider the distinguished double derivation 
$E:a\mapsto a\otimes 1-1\otimes a$, denoted by $\Delta$ in~\cite{CBEG}. 
Recall that the $A$-bimodule of ($R$-linear) double derivations is defined as  
\[
D_{A/R}:=\mathrm{Der}_R(A,A\otimes A)\simeq \Omega_R^1 A^\vee, 
\]
where the derivations are taken with respect to the outer $A$-bimodule structure on $A\otimes A$, and the remaining 
$A$-bimodule structure on $D_{A/R}$ comes from the inner one on $A\otimes A$. 
Here $\Omega_R^1A$ is the kernel of the multiplication $A\otimes_R A\to A$, and inherits its $A$-bimodule structure from 
the outer one on $A\otimes A$; it is isomorphic to $A\otimes_R\bar A$ as a left $A$-module ($1\otimes da\in A\otimes_R\bar A$ 
being identified with $E(a)\in \Omega_R^1A$. As a matter of notation, we will often write $\Omega_{A/R}:=\Omega_R^1 A$. 

There is an obvious graded algebra isomorphism $\Omega^*_R A\simeq T_A(\Omega_R^1A[-1])$, as well as a left $A$-module isomorphisms 
$\Omega_R^n A\simeq A\otimes_R\bar A^{\otimes_R n}$ (see~\cite{CQ}). 
For later purposes, we also introduce the graded algebra of polyvector fields $D^*_RA=T_A(D_{A/R}[-1])$ from~\cite{VdB}. 

Following~\cite{CBEG} we define, for any $R$-linear double derivation $\delta\in D_{A/R}$ of $A$, a graded double derivation
\[
i_\delta:\Omega^*_R A\rightarrow \Omega^*_R A \otimes \Omega^*_R A
\]
of $\Omega^*_R A$ by setting
\[
i_\delta(a):=0\qquad \mathrm{and}\qquad i_\delta(da):=\delta(a)
\]
for any $a\in A$. On $\Omega_R^2A$ we thus have for instance
\[
i_{\delta}(pdqdr)=p\delta(q)'\otimes\delta(q)''dr-pdq\delta(r)'\otimes\delta(r)''\in A\otimes\Omega_R^1A+\Omega_R^1A\otimes A,
\]
where we use Sweedler's sumless notation $\delta(a)=\delta(a)'\otimes\delta(a)''$. 
The graded double derivation $i_\delta$ induces a linear contraction operator 
\[
\iota_\delta:={}^\circ i_\delta:\Omega_R^* A\rightarrow\Omega_R ^{*-1}A,
\]
where ${}^\circ(\alpha\otimes\beta)=(-1)^{kl}\beta\otimes\alpha$ for $\alpha\otimes\beta\in\Omega_R^kA\otimes\Omega_R^lA$. 
Our differential will be given by the contraction operator $\iota_E:\Omega_R^* A\to\Omega_R ^{*-1}A$, which has the following properties thanks 
to ~\cite[Lemma 3.1.1]{CBEG}: it is explicitely given by the formula
\[
\iota_E(a_0da_1\dots da_n)=\sum_{l=1}^n(-1)^{(l-1)(n-1)+1}[a_l,da_{l+1}\dots da_na_0da_1\dots da_{l-1}],
\]
it vanishes on $[\Omega^*_RA,\Omega^*_RA]$ (and thus factors though $\mathrm{DR}_R^* A$) it takes vales in 
$[\Omega^*_RA,\Omega^*_RA]^R$ (in particular, $\iota_E^2=0$), and $[\iota_E,d]=0$. As a consequence, we obtain that 
$\big(\Omega^*_RA,\iota_E,d)$ is a mixed complex. 

\subsection{Hochschild chains versus noncommutative forms}\label{subsechhncform}

Below we rephrase some constructions and results of~\cite{GiSc} in terms of mixed complexes. 
Beware that the notation used here is not exactly the same than in \textit{op. cit.}. 
For the moment we only assume that $A$ is a $k$-algebra. 

Through the identification $C_*(A)\simeq \Omega^*_kA$, the Hochschild differential $b$ reads as
\[
b(\alpha da)=(-1)^{|\alpha|}[\alpha,a]. 
\]
The Karoubi operator on $\Omega_k^* A$, 
given by
\[
\kappa(\alpha da)=(-1)^{|\alpha|}da\alpha,
\]
allows one to define a harmonic decomposition $\bar\Omega_k^* A=P\bar\Omega_k^* A\oplus P^\perp\bar\Omega_k^*A$ where
\[
P\bar\Omega_k^* A=\ker(1-\kappa)^2\qquad\text{and}\qquad P^\perp\bar\Omega_k^*A=\mathrm{ima}(1-\kappa)^2.\]
The following identites hold: 
\[ 
\iota_E=bN|_P\qquad\text{and}\qquad B=Nd|_P,
\] 
where $N$ is the grading operator and $B$ the Connes mixed differential. 

Hence we have the following chain of morphisms of mixed complexes
\begin{equation}\label{NCHKR}
\xymatrix{
(\bar\Omega_k^*A, \iota_E, d)\ar[r]^-P&(P\bar\Omega_k^*A,\iota_E,d)\ar[r]^-{N!}&(P\bar\Omega_k^*A,b,B)\ar@{^(->}[r]&(\bar\Omega_k^*A,b,B)
}
\end{equation}
such that, according to \cite{GiSc}: $\overline{[d\Omega_k^*A,d\Omega_k^*A]}\hookrightarrow(\ker(P)[\![u]\!],\iota_E-ud)$ is a quasi-isomorphism, 
where $u$ is a degree $-2$ formal variable, $N!$ is an isomorphism, and the rightmost inclusion is a quasi-isomorphism. 
We thus get a quasi-isomorphism
\[
\left(\dfrac{\bar\Omega_k^* A[\![u]\!]}{\overline{[d\Omega_k^*A,d\Omega_k^*A]}},\iota_E-ud\right) \longrightarrow (\bar\Omega_RA[\![u]\!],b-uB)
\]
and the homology of both complexes yields the reduced negative cyclic homology $\overline{\mathrm{HC}}^-(A)$. 

\medskip

Hence, when $A=A_\cC$, for $\cC$ a genuine $k$-linear category with a finite set $I$ of objects, and 
$R=\oplus_{i\in I}ke_i$, we have a zig-zag \[
\xymatrix{
~~\bigg(\bar C_*(\cC)[\![u]\!],b-uB\bigg)~~\ar@{^(->}[rr]^-{\sim}&& ~~\bigg(\bar C_*(A)[\![u]\!],b-uB\bigg)~~\\
\left(\dfrac{\bar\Omega_k^* A[\![u]\!]}{\overline{[d\Omega_k^*A,d\Omega_k^*A]}},\iota_E-ud\right)\ar[urr]^-[@!15]\sim\ar[rr]&&
 \left(\dfrac{\bar\Omega_R^* A[\![u]\!]}{\overline{[d\Omega_R^*A,d\Omega_R^*A]}},\iota_E-ud\right)
}\]
where only the last bottom arrow may not be a quasi-isomorphism. 
\subsection{Computations for $A=k[x^{\pm1}]$}\label{1cypm}

As a matter of convention, we always mean $(dx)y$ if no brackets appear in $dxy$.
We want to find a harmonic cyclic lift for $\alpha_1:=x^{-1}dx\in\bar\Omega^1A$ which is closed for the mixed structure $(P\bar\Omega,\iota_E,d)$. 
That means that $A$ is $1$-pre-Calabi--Yau according to the terminology of~\cite{BCS}. This was already proved in~\cite{BCS2} using the standard normalized 
Hochschild complex, but we reprove it here on the ``de Rham side'' and check consistency afterwards to illustrate~\eqref{NCHKR}.

Set $\alpha_n=(x^{-1}dx)^{2n-1},\beta_n=\kappa(\alpha_n)=(dxx^{-1})^{2n-1}\in\bar\Omega^{2n-1}A$. Then\[
\kappa(\beta_n)=\kappa(-\beta_{n-1}dxdx^{-1})=-dx^{-1}\beta_{n-1}dx=\alpha_n.\]
Hence $\alpha_n+\beta_n\in P\bar\Omega A$ and $\alpha_n-\beta_n=\frac{1}{2}(1-\kappa)^2(\alpha_{n})\in P^\perp\bar\Omega A$.
Then,\begin{align*}
\iota_E\alpha_n&=\frac{1}{2}(2n-1)b(\alpha_n+\beta_n)\\
&=\frac{1}{2}(2n-1)([\alpha_{n-1}x^{-1}dxx^{-1},x]-[\beta_{n-1}dx,x^{-1}])\\
&=\frac{1}{2}(2n-1)(x^{-1}\beta_{n-1}dx+\alpha_{n-1}x^{-1}dx-\beta_{n-1}dxx^{-1}-x\alpha_{n-1}x^{-1}dxx^{-1})\\
&=(2n-1)((x^{-1}dx)^{2n-2}-(dxx^{-1})^{2n-2}).
\end{align*}
On the other hand, $d\alpha_1=-(x^{-1}dx)^2$, and if we assume $d\alpha_{n-1}=-(x^{-1}dx)^{2n-2}$, we get\begin{align*}
d\alpha_n&=d(x^{-1}dx(x^{-1}dx)^{2n-2})\\
&=d(x^{-1}dx)(x^{-1}dx)^{2n-2}-x^{-1}dxd((x^{-1}dx)^{2n-2})\\
&=-x^{-1}dxx^{-1}dx(x^{-1}dx)^{2n-2}-x^{-1}dxd^2\alpha_{n-1}\\
&=-(x^{-1}dx)^{2n}.
\end{align*}
Similarly $d\beta_n=(dxx^{-1})^{2n}$ for all $n$. Thus, as $\iota_E\alpha_n=\iota_E\beta_n$,\[
\iota_E(\alpha_n+\beta_n)=2\iota_E\alpha_n=-2(2n-1)d(\beta_{n-1}+\alpha_{n-1}).\]
As a consequence $(\iota_E-ud)(\gamma)=0$, where $\gamma_k=\frac{1}{2}(\alpha_k+\beta_k)\in P\bar\Omega^{2k-1}k[x^{\pm1}]$ and
\[\gamma=\sum_{k\ge0}\dfrac{k!}{(2k+1)!}(-u)^k\gamma_{k+1}\]
where $u$ is a formal degree $-2$ variable.

Let us check now that it is coherent with~\cite{BCS2}.
Through~\eqref{NCHKR} and the isomorphism $\Omega^nA\simeq A\otimes\bar A^{\otimes n}$, $\gamma$ is mapped to \[
\sum_{k\ge0}{k!}u^k\dfrac{(x^{-1}\otimes x)^{\otimes(k+1)}-(x\otimes x^{-1})^{\otimes(k+1)}}{2}\]
as \begin{align*}
\alpha_{k+1}&=(x^{-1}dx)^{2k+1}=(-1)^kx^{-1}(dxdx^{-1})^kdx,\\
\beta_{k+1}&=(dxx^{-1})^{2k+1}=(-1)^{k+1}x(dx^{-1}dx)^kdx^{-1},\\
\text{and }\gamma_{k+1}&\in P\bar\Omega^{2k+1},
\end{align*}
all of which being consistent with~\cite[3.1.1]{BCS2}.


\section{Fusion}\label{section: fusion}

In this section, we compare certain pushouts of $k$-linear dg-categories with the fusion formalism introduced by Van den Bergh~\cite{VdB} for algebras. 
Fusion is a process which glues two pairwise orthogonal idempotents into one. Given an algebra with a double (quasi-)Poisson structure, the 
new algebra obtained by fusion inherits a double (quasi-)Poisson structure from the original one as shown in~\cite{VdB,Fairon}. 

This will be relevant in the next sections, where we will compare fusion of bisymplectic and quasi-bisymplectic structures 
with compositions of Calabi--Yau cospans. 
  
\subsection{Fusion as a pushout}\label{fusion3.1}
 
Recall that Van den Bergh defines in~\cite{VdB} the fusion algebra which identifies two pairwise orthogonal idempotents. 
We use the notation $(-)^+$ instead of $\overline{(-)}$ as in~\cite{VdB} since it is already used.
 
\begin{definition}\label{def: fusion}
Let $R= ke_1 \oplus \cdots \oplus k e_n$ be a semi-simple algebra with pairwise orthogonal idempotents $e_i$, and $A$ an 
$R$-algebra. Set $\mu=1-e_1-e_2$ and $\epsilon=1-e_2$. Then the fusion algebra $A^f$ is defined as $\epsilon {A}^+ \epsilon$, where 
${A}^+:= A \coprod_{ke_1\oplus ke_2\oplus k\mu} (M_2(k) \oplus k\mu)$. 
Here $M_2(k)$ denotes the $(ke_1\oplus ke_2)$-algebra of $2\times 2$ matrices, and the idempotent $e_i$ is sent to $e_{ii}$, where $e_{i j}$'s are matrix units. 
\end{definition}
One can see that ${A}^+$ is isomorphic to $A \coprod_{R} {R^+}$, and that $R^+=M_2(k)\oplus R_{\geq3}$ and $R^f=ke_1\oplus R_{\geq3}$, where 
$R_{\geq3}:=ke_3\oplus\cdots\oplus ke_n$. 

\medskip

Let now $\cC$ be a dg-category with a finite set of objects $I=\{1,\dots,n\}$, $n\ge2$. We define 
\[
\cC^f:=\cC \coprod_{k\coprod k} k,
\]
where the functor $k\coprod k\to \cC$ is given by the units of the first two objects $1$ and $2$. 
Note that the strict pushout is (categorically equivalent to) a homotopy pushout. 

\begin{examples}
(1) The category $(k[x]\amalg k[y])^f$ (when defined using the strict pushout) is isomorphic to $k\langle x,y\rangle$. 
Similarly, $(k[x^{\pm1}]\amalg k[y^{\pm1}])^f$ is isomorphic $k\langle x^{\pm},y^{\pm1} \rangle$. 
As a consequence, we get that
\[
\cC^f\simeq \cC \coprod_{k[x_1^\square] \coprod k[x_2^\square]} k\langle x_1^\square,x_2^\square \rangle,
\]
where $\square\in\{\emptyset,\pm1\}$ and $k[x_i^\square] \to\mathrm{End}_{\cC}(i)$.  

(2) If $\mathcal R= \coprod_{i\in I} k$, then $\mathcal R^f= k \amalg \mathcal R_{\geq3}$, where $\mathcal R_{\geq3}:=\coprod_{i\geq3} k$. 
As a consequence, we get that 
\[
\cC^f:=\cC \coprod_{\mathcal R} \big( k\amalg \mathcal R_{\ge3}\big),
\]
where the functor $\mathcal R  \rightarrow \cC$ is uniquely determined by mapping the object of the $i$-th copy of $k$ to $i$, and the functor 
$\mathcal R \to  k\amalg \mathcal R_{\ge3}$ maps sends the first two objects of $\mathcal R$ to the object of the first copy of $k$. 
\end{examples}

\begin{proposition}\label{prop: fusion}
Let $\cC$ be a $k$-linear dg-category with set of objects $I$. Then  $A_{\cC^f}$ is isomorphic to $ (A_{\cC})^f$. 
\end{proposition}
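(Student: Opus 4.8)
The plan is to factor Van den Bergh's fusion into two elementary operations, each with a transparent categorical counterpart, and to match them step by step. Two general facts about the path-algebra construction make this possible. First, for a fixed finite object set $I$ and $R=\bigoplus_{i\in I}ke_i$, the assignment $\cC\mapsto A_\cC$ restricts to an equivalence between dg-categories with object set $I$ together with \emph{bijective-on-objects} dg-functors and $R$-dg-algebras with $R$-algebra maps; in particular (using the functoriality noted above for functors injective on objects) it carries any pushout of a diagram of bijective-on-objects functors to the corresponding pushout of $R$-algebras. Second, for an idempotent $\epsilon=\sum_{i\in J}e_i$ attached to a subset $J\subseteq I$, the corner algebra $\epsilon A_\cC\epsilon=\bigoplus_{a,b\in J}\cC(a,b)$ is exactly the path algebra of the full dg-subcategory $\cC|_J$ of $\cC$ on the objects of $J$.

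First I would realize the matrix-unit step. Let $\mathcal{I}$ be the indiscrete $k$-linear category on two objects $1,2$ (all Hom-spaces equal to $k$, with the evident composition), so that $A_{\mathcal{I}}=M_2(k)$, and let $\mathcal{R}=\coprod_{i\in I}k$ and $\mathcal{R}^+=\mathcal{I}\amalg\mathcal{R}_{\geq3}$, whose path algebras are $R$ and $R^+=M_2(k)\oplus R_{\geq3}$. Set $\cC^+:=\cC\coprod_{\mathcal{R}}\mathcal{R}^+$, where $\mathcal{R}\to\cC$ is the identity on objects and $\mathcal{R}\to\mathcal{R}^+$ is the evident bijective-on-objects inclusion. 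All three categories have object set $I$ and all structure functors are bijective on objects, so the first fact yields $A_{\cC^+}\cong A_\cC\coprod_R R^+=(A_\cC)^+$. It then remains to factor the defining pushout of $\cC^f$ through $\cC^+$: decomposing $k\coprod k\to k$ as $k\coprod k\hookrightarrow\mathcal{I}\to k$, with $\mathcal{I}\to k$ the canonical equivalence, the pushout-pasting lemma gives a canonical isomorphism $\cC^f=\cC\coprod_{k\coprod k}k\cong\cC^+\coprod_{\mathcal{I}}k$.

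The crux is the second step, the passage to the corner. I claim that the full subcategory $\cC^+_{\widehat{2}}$ of $\cC^+$ on the objects $\{1,3,\dots,n\}$ represents the pushout $\cC^+\coprod_{\mathcal{I}}k$. Since the generator $g\in\mathcal{I}(1,2)=k$ becomes the adjoined isomorphism $1\to 2$ in $\cC^+$, conjugation by $g$ defines a retraction $P\colon\cC^+\to\cC^+_{\widehat{2}}$ of the inclusion, sending $2\mapsto1$ and $g\mapsto\id_1$. Now a cocone under $\mathcal{I}\to k$ and $\cC^+$ is precisely a dg-functor $G\colon\cC^+\to\mathcal{E}$ identifying the objects $1,2$ and sending $g\mapsto\id$; any such $G$ satisfies $G(u)=\id$ for the conjugating isomorphisms $u\in\{g^{\pm1},\id\}$, hence factors uniquely as $G=(G|_{\cC^+_{\widehat{2}}})\circ P$. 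Therefore $(\cC^+_{\widehat{2}},P)$ is the pushout and $\cC^f\cong\cC^+_{\widehat{2}}$. Combining the two steps with the corner description for $\epsilon=1-e_2=\sum_{i\neq2}e_i$ gives $A_{\cC^f}\cong A_{\cC^+_{\widehat{2}}}=\epsilon A_{\cC^+}\epsilon\cong\epsilon(A_\cC)^+\epsilon=(A_\cC)^f$, as desired.

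I expect the main obstacle to be this last matching between the \emph{object-merging} encoded by the strict pushout $\cC^+\coprod_{\mathcal{I}}k$ and the \emph{corner} construction $\epsilon(-)\epsilon$: one must check that $P$ is a well-defined dg-functor and genuinely exhibits the universal property, and that the strict pushout agrees with the homotopy pushout (as already noted for $\cC^f$), so that no higher coherence is lost and the comparison is an honest isomorphism of algebras rather than merely a quasi-isomorphism. The bijective-on-objects bookkeeping of the first step and the identification of $A_{\cC^+}$ with $(A_\cC)^+$ are routine by contrast.
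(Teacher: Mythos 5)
Your proof is correct, but it takes a genuinely different route from the paper's. The paper argues directly on the single pushout $\cC\coprod_{k\coprod k}k$: it uses the universal property to produce algebra and bimodule maps out of the four pieces $e_iA e_j$ into the endomorphism algebra $B$ of the unique object, separately builds compatible (injective) maps from these pieces into $(A_\cC)^f$, and then obtains a map $B\to (A_\cC)^f$ which is surjective because the images generate; the whole comparison is a single hands-on identification. You instead factor $k\coprod k\to k$ through the indiscrete category $\mathcal I$ and thereby split the fusion into two conceptually independent steps: (i) the bijective-on-objects pushout $\cC^+=\cC\coprod_{\mathcal R}\mathcal R^+$, which the path-algebra equivalence converts verbatim into $A^+=A\coprod_R R^+$, and (ii) the identification of the remaining pushout $\cC^+\coprod_{\mathcal I}k$ with the full subcategory on $I\setminus\{2\}$, via the conjugation retraction $P$ along the adjoined isomorphism $g\colon 1\to 2$; the corner $\epsilon(-)\epsilon$ then appears as the path algebra of that full subcategory. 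What your decomposition buys is a transparent explanation of \emph{why} Van den Bergh's two-stage recipe (adjoin matrix units, then cut the corner) computes a categorical pushout --- in particular the injectivity/surjectivity of the comparison map, which the paper treats somewhat tersely, is absorbed into two clean universal-property verifications, and the Morita-type relation between $\cC^f$ and $\cC^+$ becomes visible. What the paper's argument buys is brevity and the explicit formulas $a\mapsto a e_{21}$, $a\mapsto e_{12}a$, etc., which are reused later (e.g.\ in the proof of \cref{lemma: HH and fusion}). The two general facts you invoke at the outset, and the well-definedness of $P$, are indeed routine as you say, so there is no gap.
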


\begin{proof} 
We can assume without loss of generality that $\cC$ has only two objects $1$ and $2$. We denote $e_1$ and $e_2$ their respective identity map.  
The dg-category $\cC \coprod_{k\coprod k} k$ has exactly one object which we denote $pt$. Let us show that the endomorphism ring 
$B:=\mathrm{End}(pt)$ is isomorphic to the fusion algebra $A^f$ of $A:=A_{\cC}$. 
By the pushout property, there are algebra homomorphisms 
\begin{align*}
f:\mathrm{End}_\cC(1) &\simeq e_1 A e_1  \to B\\
g:\mathrm{End}_\cC(2) &\simeq e_2 A e_2  \to B, 
\end{align*}
and bimodule morphisms $ e_1A e_2 \simeq \cC(2, 1) \to B, e_1 a e_2 \mapsto e_1 a e_{21}$ and 
$e_2A e_1 \simeq \cC(1, 2) \to B, e_2 a e_1 \mapsto e_{12} a e_1$ such that 
\[
\xymatrix{ 
\cC(2, 1) \otimes \cC(1, 2) \ar[d] ^{m} \ar[r] & B \otimes B \ar[d]^m\\
\mathrm{End}_\cC(1) \ar[r]^g & B }
\]
commutes. The algebra homomorphism $k \to B$ is then uniquely determined. 

We have injective algebra morphisms $ \mathrm{End}_\cC(1) \simeq e_1 A e_1  \to A^f, a \mapsto a$, 
$\mathrm{End}_\cC(2) \simeq e_2 A e_2  \to A^f, a \mapsto e_{12} a e_{21}$. Similarly, we have injective morphisms of bimodules 
$\cC(2,1) \simeq e_1 A e_2 \to A^f, a \mapsto a e_{21} $ and $\cC(1,2) \simeq e_2 A e_1 \to A^f, a \mapsto e_{12} a$ compatible with the composition 
of morphisms. Hence we obtain a unique injective algebra homomorphism $B \to A^f$. As the image of the above maps generate $A^f$, this morphism is 
also surjective and hence $B=A_{\cC^f} \simeq A^f$. 
\end{proof}

\subsection{Trace maps}\label{fusion3.2}

Acccording to Van den Bergh~\cite{VdB} we consider the following situation: an $R$-algebra $A$, and an idempotent $e$ in $R$ such that $ReR=R$. 
One writes $1= \sum_i p_i e  q_i$ with $p_i, q_i\in R$, and define a \textit{trace map}\[
\mathrm{Tr}:A\rightarrow eAe~;~a\mapsto\sum_ieq_iap_ie.\]
We recall a series of standard results, for which we provide full proofs for the sake of completeness; the main point is to be able to describe the trace map 
on $\Omega_RA$ and $\mathrm{DR}_RA$. 

\begin{lemma}\label{lemmedetrace}
The trace map $\mathrm{Tr}$ descends to an isomorphism $A/[A,A]\to eAe/ [eAe,eAe]$ that does not depend on the choice of decomposition $1= \sum_i p_i e  q_i$. 
\end{lemma}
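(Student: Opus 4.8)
The plan is to prove the statement in three stages: first show the trace map descends to $A/[A,A]$, then show it is independent of the decomposition, and finally show it is an isomorphism by exhibiting an explicit inverse.

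\medskip

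\textbf{Stage 1: Descent to $A/[A,A]$.} First I would check that $\mathrm{Tr}$ sends commutators to commutators, so that it factors through $A/[A,A]$. Given $a,b\in A$, I compute $\mathrm{Tr}([a,b])=\sum_i eq_i(ab-ba)p_ie$ and try to rewrite this modulo $[eAe,eAe]$. The natural move is to insert the resolution of unity $1=\sum_j p_jeq_j$ between the factors: write $eq_iabp_ie=\sum_j (eq_ia p_je)(eq_jb p_ie)$, so that $\mathrm{Tr}(ab)=\sum_{i,j}(eq_iap_je)(eq_jbp_ie)$, and similarly $\mathrm{Tr}(ba)=\sum_{i,j}(eq_ibp_je)(eq_jap_ie)$. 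Setting $x_{ij}=eq_iap_je$ and $y_{ij}=eq_ibp_je$ in $eAe$, these two expressions are $\sum_{i,j}x_{ij}y_{ji}$ and $\sum_{i,j}y_{ij}x_{ji}$, which differ by a sum of commutators in $eAe$. Hence $\mathrm{Tr}([a,b])\in[eAe,eAe]$, giving the induced map $\bar{\mathrm{Tr}}:A/[A,A]\to eAe/[eAe,eAe]$.

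\medskip

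\textbf{Stage 2: Independence of the decomposition.} For this I would compare two decompositions $1=\sum_ip_ieq_i=\sum_jp_j'eq_j'$ and use the commutator identity again to show the two resulting traces agree modulo $[eAe,eAe]$. Concretely, for $a\in A$ I would insert one resolution into the other: $\sum_jeq_j'ap_j'e=\sum_{i,j}eq_j'(p_ieq_i)a(p_j'e)$, and then move the $eq_i(\cdots)p_i$ block around cyclically using exactly the same $\sum x y - \sum y x\in[eAe,eAe]$ trick. Because we have already passed to $eAe/[eAe,eAe]$, the cyclic reshuffling is legitimate and the two expressions coincide there.

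\medskip

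\textbf{Stage 3: Isomorphism.} Here the hypothesis $ReR=R$ is essential. I would construct the inverse from the inclusion $\iota:eAe\hookrightarrow A$, which induces $\bar\iota:eAe/[eAe,eAe]\to A/[A,A]$. I then verify $\bar{\mathrm{Tr}}\circ\bar\iota=\mathrm{id}$ and $\bar\iota\circ\bar{\mathrm{Tr}}=\mathrm{id}$ on the respective quotients. For the first, if $a\in eAe$ then $ea=ae=a$, so $\mathrm{Tr}(a)=\sum_ieq_iap_ie=\sum_ieq_ieaep_ie$; using $e=e\cdot1\cdot e$ and the resolution, together with the commutator trick, this collapses to $a$ modulo $[eAe,eAe]$. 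For the other composite, I need that every class in $A/[A,A]$ is represented by an element cyclically equivalent to one of the form $\sum eq_iap_ie$; this is exactly where $1=\sum p_ieq_i$ lets me write $a=\sum_{i,j}p_ieq_i\,a\,p_jeq_j$ and cyclically permute the $p$'s and $q$'s past $a$ to land inside $eAe$. I expect Stage 3, specifically verifying that $\bar\iota$ is a two-sided inverse rather than merely a one-sided one, to be the main obstacle, since it is precisely the place where the nondegeneracy condition $ReR=R$ must be invoked and where the cyclic-equivalence bookkeeping is most delicate; the descent and independence in Stages 1 and 2 are essentially the same commutator computation applied twice.
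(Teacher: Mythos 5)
Your proof is correct, and Stages 1 and 3 match the paper's argument: the same commutator computation $\sum_{i,j}x_{ij}y_{ji}-\sum_{i,j}y_{ij}x_{ji}\in[eAe,eAe]$ for descent, and the same verification that $\bar{\mathrm{Tr}}$ is a two-sided inverse to the map induced by the inclusion $eAe\hookrightarrow A$ (both composites reduce to inserting $1=\sum_ip_ieq_i$ and cyclically permuting, so Stage 3 is in fact a two-line check rather than the ``main obstacle'' you anticipate). The genuine difference is Stage 2: you prove independence of the decomposition directly, by inserting one resolution of unity into the other and reshuffling cyclically, whereas the paper gets it for free --- since $\bar{\mathrm{Tr}}$ is the two-sided inverse of the fixed map $eAe/[eAe,eAe]\to A/[A,A]$, which manifestly does not depend on any decomposition, uniqueness of inverses forces $\bar{\mathrm{Tr}}$ to be decomposition-independent as well. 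Your direct computation is perfectly valid and perhaps more transparent; the paper's route is shorter and explains \emph{why} independence holds. One small correction: the condition $ReR=R$ is not specifically what makes Stage 3 work --- it is what guarantees that a decomposition $1=\sum_ip_ieq_i$ exists at all, and once you have one, every step of the argument uses it in the same way.
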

\begin{proof}
First of all, the trace map $\mathrm{Tr}$ sends commutators to commutators. Indeed: 
\begin{align*}
\mathrm{Tr}(ab-ba) & = \sum_{i}(eq_iabp_ie-eq_ibap_ie)  \\
& = \sum_{i,j}eq_iap_jeq_jbp_ie-eq_ibp_jeq_jap_ie \\
& = \sum_{i,j}eq_iap_jeq_jbp_ie-eq_jbp_ieq_iap_je\in[eAe,eAe]
\end{align*}
Then, one can check that it is a $k$-linear inverse modulo commutators, to the algebra morphism $eAe\to A$. Indeed: on the one hand, 
$a=\sum_ip_ieq_i a=\mathrm{Tr}(a)~\mathrm{mod}~[A,A]$, and on the other hand, 
$eae=\sum_iep_ieq_i eae=\mathrm{Tr}(eae)~\mathrm{mod}~[eAe,eAe]$. 
Since the morphism $eAe\to A$ does not depend on the decomposition of $1$, its inverse (modulo commutators) does not either. 
\end{proof}

\begin{lemma}\label{lemma: trace}
For any two $A$-bimodule $M$ and $N$, the canonical morphism $Me \otimes_{eRe} eN \to M \otimes_R N$ of $A$-bimodules is inversible with inverse given by 
\[
\Psi_{M,N}:M \otimes_R N \to Me \otimes_{eRe} eN~;~m \otimes n \mapsto \sum_i mp_i e \otimes eq_i n.
\]
\end{lemma}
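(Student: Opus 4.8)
The plan is to show that $\Psi_{M,N}$ is a well-defined, two-sided inverse to the canonical map $\Phi_{M,N}: Me \otimes_{eRe} eN \to M \otimes_R N$ sending $me \otimes en \mapsto me \otimes en = m \otimes n$ (using the $R$-balancing). First I would verify that $\Psi_{M,N}$ is well-defined, i.e. that the formula $m \otimes n \mapsto \sum_i m p_i e \otimes e q_i n$ respects the $R$-balancing over $R$ on the source. This amounts to checking that for $r \in R$ the expressions obtained from $mr \otimes n$ and $m \otimes rn$ agree in $Me \otimes_{eRe} eN$; here one uses that $1 = \sum_i p_i e q_i$ together with the relation $\sum_i m p_i e \otimes e q_i r n = \sum_j (\sum_i m p_i e q_i) r p_j e \otimes e q_j n$ after inserting a second copy of the decomposition, exactly as in the commutator computation in the proof of \cref{lemmedetrace}. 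The $A$-bimodule compatibility is immediate since the outer $A$-actions are untouched by the formula.

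Next I would compute the two composites. For $\Phi_{M,N} \circ \Psi_{M,N}$ applied to $m \otimes n \in M \otimes_R N$ we get $\sum_i m p_i e \otimes e q_i n$, which in $M \otimes_R N$ equals $\sum_i m p_i e q_i \otimes n = m \left( \sum_i p_i e q_i \right) \otimes n = m \otimes n$ since $\sum_i p_i e q_i = 1$; this uses that $e$ and $q_i$ can be absorbed across the $R$-balanced tensor. For the other composite $\Psi_{M,N} \circ \Phi_{M,N}$ applied to $me \otimes en \in Me \otimes_{eRe} eN$, one obtains $\sum_i me p_i e \otimes e q_i en$, and the point is that this collapses back to $me \otimes en$: here one moves $e q_i e \in eRe$ across the balanced tensor over $eRe$ and sums, again invoking $\sum_i p_i e q_i = 1$ sandwiched appropriately between the $e$'s.

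The main obstacle I expect is precisely the well-definedness of $\Psi_{M,N}$ over the balanced tensor product and the verification of $\Psi \circ \Phi = \id$, since both require carefully inserting the decomposition $1 = \sum_i p_i e q_i$ twice and tracking how factors of $e$ move across $\otimes_{eRe}$ versus $\otimes_R$. This is the same bookkeeping as in \cref{lemmedetrace}, where independence of the decomposition and the commutator identities rely on the hypothesis $ReR = R$ (equivalently the existence of such a decomposition). One should also note that independence of $\Psi_{M,N}$ from the chosen decomposition, while not strictly needed to prove invertibility, follows formally once $\Psi_{M,N}$ is identified as the inverse of the canonical, decomposition-independent map $\Phi_{M,N}$. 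The remaining naturality/bimodule-morphism assertions are routine and I would not grind through them.
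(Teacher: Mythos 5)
Your proposal is correct and follows essentially the same route as the paper: check well-definedness of $\Psi_{M,N}$ over the $R$-balanced tensor by inserting the decomposition $1=\sum_i p_ieq_i$ and moving elements of $eRe$ across $\otimes_{eRe}$, then verify both composites with the canonical map collapse via $\sum_i p_ieq_i=1$. The paper simply writes out the well-definedness computation explicitly (writing $r=\sum_j h_jel_j$ and shuffling), which is the bookkeeping you correctly identify as the only real content.
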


\begin{proof}
Let us check that it is well defined. Consider $r\in R$ and write $r= \sum_j h_j e l_j$ for some $h_j, l_j \in R$. Then \allowdisplaybreaks
\begin{align*} 
\Psi_{M,N} (mr \otimes n ) & = \sum_imrp_i e \otimes eq_i n =  \sum_{i,j} m  h_j e l_j p_i e \otimes eq_i n\\
&=  \sum_{i,j} m  h_je   \otimes e l_j p_i eq_i n =  \sum_{j} m  h_je   \otimes e l_j  n\\
&= \sum_{i,j} m  p_i e q_i h_je   \otimes e l_j  n =  \sum_{i,j} m  p_i  e \otimes e q_i  h_je   l_j  n \\
&=\sum_i  m  p_i  e \otimes e q_i r n  = \Psi_{M,N}( m \otimes r n ).
\end{align*}
We finally observe that $\Psi_{M,N}$ is an inverse to the canonical morphism $Me \otimes_{eRe} eN \to M \otimes_R N$. Indeed, in $M \otimes_R N$, 
$\sum_i mp_i e \otimes eq_i n=\sum_i m\otimes p_i e q_i n=m\otimes n$, and in $Me \otimes_{eRe} eN$, 
$\sum_i mep_i e \otimes eq_i en=\sum_i me\otimes p_i e q_i e n=me\otimes en$. 
\end{proof} 

As a matter of notation, we introduce $\Psi_M:=\Psi_{M,M}$. 

\begin{lemma}\label{lemma: trace-omega} 
The isomorphism $\Psi_{\Omega_{A/ R}}$ induces an isomorphism $e(\Omega_RA)e \simeq \Omega_{eRe}(eAe)$, through which the trace map 
of $\Omega_RA$ reads as follow: 
\begin{align*}
 \mathrm{Tr}:  \Omega_RA& \rightarrow e(\Omega_RA)e \simeq \Omega_{eRe}(eAe) \\
  a_0da_1\dots da_m&\longmapsto \sum_{i_0,\dots,i_m}eq_{i_0}a_0p_{i_1}ed(eq_{i_1}a_1p_{i_2}e)\dots d(eq_{i_m}a_mp_{i_0}e). 
\end{align*}
Moreover, it induces a $k$-linear isomorphism 
\[
\mathrm{Tr}: \DR_R ( A ) \to \DR_{eRe}(  eAe ) 
\]
that does not depend on the decomposition $1=\sum_i p_ieq_i$. 
\end{lemma}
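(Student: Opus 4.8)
The plan is to lift the isomorphism $\Psi_{\Omega_{A/R}}$ from \cref{lemma: trace} (applied with $M=N=\Omega_{A/R}$) from degree-one forms to the full graded algebra $\Omega_R A$, and then to identify the resulting map with the trace map. First I would recall that $\Omega_R A \simeq T_A(\Omega_{A/R}[-1])$ and $\Omega_{eRe}(eAe)\simeq T_{eAe}(\Omega_{(eAe)/(eRe)}[-1])$ as graded algebras, so it suffices to produce a compatible isomorphism in each tensor degree. Since $\Psi_{\Omega_{A/R}}:\Omega_{A/R}\to (\Omega_{A/R})e\otimes_{eRe}e(\Omega_{A/R})$ — more precisely the inverse of the canonical map — identifies $e\,\Omega_{A/R}\,e$ with $\Omega_{(eAe)/(eRe)}$, I would iterate \cref{lemma: trace}: applying $\Psi_{M,N}$ repeatedly with $M,N$ running over tensor powers of $\Omega_{A/R}$ gives $e\big(\Omega_{A/R}^{\otimes_A m}\big)e\simeq \big(e\Omega_{A/R}e\big)^{\otimes_{eAe} m}$, which is exactly $e(\Omega_R^m A)e\simeq \Omega_{eRe}^m(eAe)$. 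One must check here that the isomorphisms assemble into a graded algebra map, which follows from the naturality/associativity of $\Psi$ already encoded in the coherence checks of \cref{lemma: trace}.

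Next I would compute the composite explicitly on a generator $a_0\,da_1\cdots da_m$ to verify the displayed formula. The key mechanism is that inserting $1=\sum_i p_i e q_i$ between consecutive factors, then sliding the idempotents across the $\otimes_R$'s using the relation $\sum_i \bullet p_i e \otimes e q_i\bullet$, converts each $a_j$ into the ``trace-sandwiched'' expression $eq_{i_j}a_jp_{i_{j+1}}e$, cyclically, with the outer indices matching because the trace closes up $a_m$ back to $a_0$ via $p_{i_0}$. Concretely I would write $a_0 da_1\cdots da_m\in \Omega_R^m A$, apply $\Psi$ on each of the $m$ tensor slots, and collect the idempotents; the Leibniz rule $d(eq_{i_j}a_jp_{i_{j+1}}e)=d(e)\cdots$ is harmless since $e\in eRe$ is $d$-closed (as $e$ lies in the subalgebra $R$, $de=0$), so $d(eq a p e)=e\,d(\cdots)\,e$ lands in $\Omega_{(eAe)/(eRe)}$ as required. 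This is bookkeeping, but it is the step most prone to sign or indexing slips.

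For the second assertion I would pass to the quotient by commutators. The map $\mathrm{Tr}$ on $\Omega_R A$ is built from the genuine algebra trace in a way entirely parallel to \cref{lemmedetrace}; so the same two observations apply. First, $\mathrm{Tr}$ sends $[\Omega_R A,\Omega_R A]$ into $[e(\Omega_R A)e, e(\Omega_R A)e]$ — I would reprove this by the identical cyclic-shuffling computation as in the proof of \cref{lemmedetrace}, now in the graded setting with Koszul signs tracked through the (graded) commutators. Second, to see it descends to a \emph{bijection} on $\DR$, I would exhibit it as inverse, modulo commutators, to the map $\DR_{eRe}(eAe)\to \DR_R A$ induced by the inclusion $eAe\hookrightarrow A$: using $a=\sum_i p_i e q_i a \equiv \mathrm{Tr}(a)$ and its one-sided analog on $e(\Omega_R A)e$ exactly as before, now extended along the tensor algebra. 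Independence from the decomposition $1=\sum_i p_i e q_i$ then follows because the inclusion-induced map is manifestly decomposition-independent, so its two-sided inverse modulo commutators must be too.

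The main obstacle I expect is \emph{not} the formula itself but verifying that the degree-wise isomorphisms $\Psi$ genuinely glue into a single graded algebra isomorphism $e(\Omega_R A)e\simeq\Omega_{eRe}(eAe)$ compatibly with the mixed differential $d$ — i.e. that $\Psi$ intertwines the two $d$'s. This requires checking that $\Psi$ commutes with $d$ (which reduces to $de=0$ and the derivation property), and that the tensor-factor-wise application of $\Psi$ respects the defining relation $d(ab)=a\,db+da\,b$ after sandwiching by idempotents. Tracking the graded signs from ${}^\circ(\alpha\otimes\beta)=(-1)^{kl}\beta\otimes\alpha$-style conventions through these compatibilities, rather than any single computation, is the delicate part.
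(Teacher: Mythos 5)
Your proposal is correct and follows essentially the same route as the paper: iterate $\Psi$ from \cref{lemma: trace} to identify $e(T_A\Omega_{A/R})e$ with $T_{eAe}(e\Omega_{A/R}e)$, identify $e\Omega_{A/R}e$ with $\Omega_{eAe/eRe}$ (the paper does this via $\Psi_A$ applied to the kernels of the multiplication maps), track the idempotents through $edae\mapsto d(eae)$ to get the displayed formula, and reduce the statement on $\DR$ to the graded analogue of \cref{lemmedetrace}. The compatibility with $d$ that you flag as the delicate point is handled in the paper exactly as you suggest, by the single check that $edae=ea\otimes e-e\otimes ae$ corresponds to $d(eae)$ under the identification.
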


\begin{proof}
Thanks to the previous lemma, the isomorphism $\Psi_{\Omega_{A/ R}}$ induces an isomorphism of tensor algebras 
$e( T_{A} \Omega_{A/R} )e\simeq T_{eAe}  ( e\Omega_{A/ R} e) $. 
Using $\Psi_A$ we also have 
\begin{align*}
\Omega_{eAe/eRe}&=\ker(eAe\otimes_{eRe}eAe\rightarrow eAe)\\
&\simeq\ker(eA\otimes_{R}Ae\rightarrow eAe)\\
&=e\ker(A\otimes_{R}A\rightarrow A)e\\
&=e\Omega_{A/R}e.
\end{align*}
Combining these we get 
\[
e (\Omega_RA) e := e( T_{A} \Omega_{A/R} )e\simeq T_{eAe}  ( e\Omega_{A/ R} e)\simeq T_{eAe}   \Omega_{eAe/ eRe} = :\Omega_{eRe}(eAe).
\]
Through this identification, an element $edae=ea\otimes e-e\otimes ae\in e\Omega_{A/R}e$ becomes, in $\Omega_{eAe/eRe}$, 
\[
\sum_ieap_ie\otimes eq_ie-ep_ie\otimes eq_iae=eae\otimes e-e\otimes eae=:d(eae)\in \Omega_{eAe/eRe}. 
\]
Thus the trace map reads
\begin{align*}
\Omega_RA\ni a_0da_1\dots da_m&\mapsto\sum_{i_0}eq_{i_0}a_0da_1\dots da_mp_{i_0}e\in e(\Omega_RA)e\\
&\mapsto\sum_{i_0,i_1,\dots,i_m}eq_{i_0}a_0p_{i_1}ed(eq_{i_1}a_1p_{i_2}e)\dots d(eq_{i_m}a_mp_{i_0}e)\in \Omega_{eRe}(eAe).
\end{align*}
The last part of the claim follows from \cref{lemmedetrace}
\end{proof} 

\subsection{Functoriality}

We now apply the constructions from the previous \cref{fusion3.2} to the idempotent $\epsilon=1-e_2$ of ${R^+}$ (see \cref{def: fusion}), 
where $1=\epsilon\epsilon\epsilon+e_{21}\epsilon e_{12}$. 
Precomposing with the algebra morphism $A\to{A^+}$ we get maps $\Omega_RA\to\Omega_{R^f}A^f$ and $ \DR_R(A)  \to \DR_{R^f}A^f$ that we denote by $(-)^f$. 
Since $\epsilon e_{12}=e_{12}$ and $e_{21}\epsilon=e_{21}$, we have $\mathrm{Tr}(a)= \epsilon a \epsilon+ e_{12} a e_{21}$ for all $a\in{A^+}$. 
Actually the trace map in this situation also has a simpler expression on forms.

\begin{lemma}\label{lemma: tromega}
On $\Omega_{{A^+}/{R^+}}$ we have\[
\mathrm{Tr}(adb )=  \epsilon  a  d b \epsilon +  e_{12} ad  b e_{21}\]
and dually we have a trace map on double derivations \[
\mathrm{Tr}:D_{{R^+}}{A^+}\to D_{R^f}A^f~,~\delta\mapsto \epsilon  \delta \epsilon +  e_{12} \delta e_{21}.\]
More generally, if $\omega\in\Omega_{{R^+}}{A^+}$, we have $\mathrm{Tr}(\omega)=\epsilon  \omega \epsilon +  e_{12} \omega e_{21}$.
\end{lemma}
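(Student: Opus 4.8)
The plan is to deduce everything from the explicit trace formula of \cref{lemma: trace-omega}, specialised to the decomposition $1=\epsilon\epsilon\epsilon+e_{21}\epsilon e_{12}$ fixed above; that is, we take $e=\epsilon$ together with the two-term decomposition $(p_1,q_1)=(\epsilon,\epsilon)$ and $(p_2,q_2)=(e_{21},e_{12})$. I would prove the general statement ``$\mathrm{Tr}(\omega)=\epsilon\omega\epsilon+e_{12}\omega e_{21}$'' first, since by $k$-linearity it suffices to treat $\omega=a_0da_1\cdots da_m$, and the displayed formula for $\mathrm{Tr}(adb)$ is then just the case $m=1$.

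For such an $\omega$, \cref{lemma: trace-omega} gives
\[
\mathrm{Tr}(a_0da_1\cdots da_m)=\sum_{i_0,\dots,i_m}\epsilon q_{i_0}a_0p_{i_1}\epsilon\,d(\epsilon q_{i_1}a_1p_{i_2}\epsilon)\cdots d(\epsilon q_{i_m}a_mp_{i_0}\epsilon).
\]
The first step is to simplify each differential: since $\epsilon q_{i_j}$ and $p_{i_{j+1}}\epsilon$ lie in $R^+$ and $d$ is $R^+$-linear (it vanishes on $R^+$ and satisfies $d(rar')=r\,(da)\,r'$ for $r,r'\in R^+$), we have $d(\epsilon q_{i_j}a_jp_{i_{j+1}}\epsilon)=\epsilon q_{i_j}\,(da_j)\,p_{i_{j+1}}\epsilon$. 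The second, and key, step is a telescoping of the internal indices. Multiplying the consecutive factors and using $\epsilon^2=\epsilon$, each junction between factor $j$ and factor $j+1$ becomes $p_{i_{j+1}}\epsilon q_{i_{j+1}}$, so that each internal index $i_1,\dots,i_m$ occurs exactly once; summing it against the fixed decomposition $\sum_i p_i\epsilon q_i=1$ removes it. Only the outer index $i_0$, appearing as $\epsilon q_{i_0}$ at the far left and $p_{i_0}\epsilon$ at the far right, survives, leaving
\[
\mathrm{Tr}(\omega)=\sum_{i_0}\epsilon q_{i_0}\,\omega\,p_{i_0}\epsilon.
\]
Finally, I would evaluate this two-term sum using $\epsilon q_1=p_1\epsilon=\epsilon$, $\epsilon q_2=\epsilon e_{12}=e_{12}$ and $p_2\epsilon=e_{21}\epsilon=e_{21}$, obtaining $\mathrm{Tr}(\omega)=\epsilon\omega\epsilon+e_{12}\omega e_{21}$, whence also the case $m=1$, namely $\mathrm{Tr}(adb)=\epsilon a\,db\,\epsilon+e_{12}a\,db\,e_{21}$.

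For the dual statement on double derivations, I would invoke the duality $D_{A^+/R^+}\simeq(\Omega_{A^+/R^+})^\vee$ recalled in the section on noncommutative forms: the trace on $1$-forms transposes to a map on double derivations, and reading off the transpose of the formula for $\mathrm{Tr}(adb)$ produces exactly $\delta\mapsto\epsilon\delta\epsilon+e_{12}\delta e_{21}$, where the expression is taken with respect to the inner $A^+$-bimodule structure carried by $D_{A^+/R^+}$. Here one must check that, under the duality, the inner bimodule structure on double derivations is paired with the structure appearing in the form formula, and that the resulting assignment is $R^f$-linear with values in $A^f\otimes A^f$, i.e.\ genuinely lands in $D_{R^f}A^f$.

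I expect the main obstacle to be the bookkeeping of the telescoping step---verifying that each internal index is contracted exactly once against $\sum_i p_i\epsilon q_i=1$ while $i_0$ remains as the outer sum---together with making the double-derivation duality precise, that is, matching the inner/outer bimodule conventions so that $\epsilon\delta\epsilon+e_{12}\delta e_{21}$ indeed defines an element of $D_{R^f}A^f$. The computation on forms itself is a clean, essentially formal, telescoping once the decomposition and the $R^+$-linearity of $d$ are in place.
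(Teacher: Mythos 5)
Your proposal is correct and starts from the same place as the paper (specialising \cref{lemma: trace-omega} to the decomposition $1=\epsilon\epsilon\epsilon+e_{21}\epsilon e_{12}$), but the mechanism differs in a useful way. The paper only writes out the $m=1$ case: it expands the four terms $\sum_{i_0,i_1}\epsilon q_{i_0}ap_{i_1}\epsilon\, d(\epsilon q_{i_1}bp_{i_0}\epsilon)$, then splits into the cases $a\in Ae_2,\ b\in e_2A$ versus $a\in Ae_i,\ b\in e_iA$ with $i\neq 2$, using $\epsilon e_2=0$ to kill the spurious terms in each case, and ends with ``it generalizes to all forms.'' Your telescoping of the internal indices against $\sum_i p_i\epsilon q_i=1$ proves the general degree-$m$ statement directly and is, in effect, just the observation that the identification of \cref{lemma: trace-omega} is inverted by concatenation, so that $\mathrm{Tr}(\omega)=\sum_{i_0}\epsilon q_{i_0}\omega p_{i_0}\epsilon$ in $\epsilon\,\Omega_{R^+}A^+\epsilon$; this actually supplies the step the paper leaves as an assertion. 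One small caution: your justification $d(\epsilon q_{i_j}a_jp_{i_{j+1}}\epsilon)=\epsilon q_{i_j}(da_j)p_{i_{j+1}}\epsilon$ via ``$R^+$-linearity of $d$'' is not quite literal, since $\epsilon e_{12}=e_{12}$ does \emph{not} lie in $R^f=\epsilon R^+\epsilon$ (indeed $e_{12}\epsilon=0$), so the universal derivation of $A^f$ over $R^f$ does not see these elements as scalars; the identity is correct, but it should be checked inside $A^+\otimes_{R^+}A^+$, where $d$ genuinely is $R^+$-linear and where the discrepancy between $\epsilon qap\otimes\epsilon$ and $\epsilon qap\epsilon\otimes\epsilon$ vanishes because $e_2\epsilon=0$ --- the same implicit step the paper makes in its second displayed line. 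Finally, the paper gives no argument at all for the dual statement on double derivations, so your transposition sketch (with the caveat about inner versus outer bimodule structures, which you rightly flag) is at least as complete as the original.
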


\begin{proof}
Thanks to~\cref{lemma: trace-omega} we have on $1$-forms\begin{align*}
\mathrm{Tr}(adb )&= \epsilon a \epsilon d (\epsilon b \epsilon)+e_{12} a \epsilon d(\epsilon b e_{21}) + e_{12} a e_{21} d (e_{12} b e_{21})+  \epsilon  a e_{21}d ( e_{12} b \epsilon) \\
&= \epsilon a \epsilon d  b \epsilon+e_{12} a \epsilon d b e_{21}+ e_{12} a e_{2} d  b e_{21}+  \epsilon  a e_{2}d b \epsilon.
\end{align*}
If $a\in Ae_2$ and $b\in e_2A$, as $\epsilon e_2=e_2\epsilon=0$, we get\[
\mathrm{Tr}(adb )= e_{12} ad  b e_{21}+  \epsilon  a  d b \epsilon.\]
If $a\in Ae_i$ and $b\in e_iA$ for some $i\neq 2$, as $\epsilon e_i=e_i\epsilon=e_i$ we again have\[
\mathrm{Tr}(adb )=  \epsilon  a  d b \epsilon +  e_{12} ad  b e_{21}.\]
It generalizes to all forms.
\end{proof}

We go back to the context of a dg-category $\cC$ with finite set of objects $I$, an set $A:=A_\cC$. 
We define idempotents $e_i=\mathrm{id_i}$ and set $R=\oplus_{i\in I}ke_i$, a subalgebra of $A$. Recall that $R^f\simeq \oplus_{i\neq2}ke_i$, 
and consider the $k$-linear map $C_{*}(\cC)\to \Omega^*_R A$ given by
\[
a_0\otimes a_1\otimes\dots\otimes a_m\mapsto a_0da_1\dots da_m.
\]
Since there is a functor $\cC\to \cC^f$, we have a natural map $\nu:C_{*}(\cC)\to C_{*}(\cC^f)$.

\begin{lemma}\label{lemma: HH and fusion}
The following diagram commutes
\[
\xymatrix{ C_{*}(\cC) \ar[r] \ar[d]_-\nu & \Omega_R^*(A)\ar[d]^{(-)^f} \\
C_{*}(\cC^f)\ar[r] &\Omega_{R^f}^*(A^f).}
\] 
\end{lemma}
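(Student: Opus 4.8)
The plan is to check commutativity on a homogeneous generator of $C_{*}(\cC)$, which suffices by $k$-linearity. I write such a generator as $a_0\otimes a_1\otimes\cdots\otimes a_m$ with $a_l\in e_{c_l}Ae_{c_{l+1}}$, indices taken cyclically so that $c_{m+1}=c_0$; then $\omega:=a_0da_1\cdots da_m$ lies in $e_{c_0}(\Omega_RA)e_{c_0}$. Going right and then down sends this generator to $\omega^f=\mathrm{Tr}(\omega)$, while going down and then across sends it to $F(a_0)\,dF(a_1)\cdots dF(a_m)$, where $F\colon\cC\to\cC^f$ is the fusion functor and each $F(a_l)$ is read inside $A^f$ through the isomorphism of \cref{prop: fusion}. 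So the whole lemma reduces to the identity
\[
F(a_0)\,dF(a_1)\cdots dF(a_m)=\mathrm{Tr}(\omega).
\]

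The key input is an explicit formula for $F$ on morphisms. Reading off the maps in the proof of \cref{prop: fusion}, I record that a morphism $a\in e_jAe_i$ is sent to $F(a)=\pi_j\,a\,\iota_i$, where $\pi_1=\iota_1=e_1$, $\pi_2=e_{12}$, $\iota_2=e_{21}$, and $\pi_i=\iota_i=e_i$ for $i\ge3$. Since all the $\pi_i,\iota_i$ lie in the base ring $R^+$, they are annihilated by the mixed differential, whence $dF(a)=\pi_j\,(da)\,\iota_i$. The crucial algebraic fact is the identity $\iota_i\pi_i=e_i$, valid for every $i$ (indeed $e_{21}e_{12}=e_{22}=e_2$, and $e_ie_i=e_i$ otherwise). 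Substituting these expressions into the product $F(a_0)\,dF(a_1)\cdots dF(a_m)$, every internal junction equals $\iota_{c_l}\pi_{c_l}=e_{c_l}$ and is absorbed by the adjacent source/target idempotent; the product therefore collapses to $\pi_{c_0}\,\omega\,\iota_{c_0}$.

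It remains to identify $\pi_{c_0}\,\omega\,\iota_{c_0}$ with $\mathrm{Tr}(\omega)=\epsilon\omega\epsilon+e_{12}\omega e_{21}$, the second equality being \cref{lemma: tromega}. This is a short case check: on $e_1(\Omega_RA)e_1$ both sides equal $\omega$ (here $e_{12}\omega e_{21}=0$); on $e_2(\Omega_RA)e_2$ both equal $e_{12}\omega e_{21}$ (here $\epsilon\omega\epsilon=0$); and on $e_i(\Omega_RA)e_i$ with $i\ge3$ both equal $\omega$. Equivalently, the operators $\pi_i(-)\iota_i$ and $\epsilon(-)\epsilon+e_{12}(-)e_{21}$ agree on $e_i(\Omega_RA)e_i$ for each $i$, which finishes the proof. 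The only real obstacle is organizational: one must pin down the source/target conventions so that $F(a)=\pi_j\,a\,\iota_i$ and $\iota_i\pi_i=e_i$ are simultaneously consistent, and keep track that the differential kills $\pi_i,\iota_i$ precisely because we work over $R^+$ rather than over $k$. As in \cref{prop: fusion}, one may reduce to two objects to lighten the notation, the objects $\ge3$ being handled identically.
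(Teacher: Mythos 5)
Your argument is correct and follows essentially the same route as the paper's proof: both sides are evaluated on a generator $a_0\otimes\cdots\otimes a_m$ using the explicit trace formula of \cref{lemma: trace-omega} on one side and the explicit description of the fusion functor on hom-spaces from the proof of \cref{prop: fusion} on the other, and one checks the resulting expressions agree. Your $\pi_j,\iota_i$ are exactly the paper's $q_{i_j},p_{i_j}$ (determined by whether the relevant object is $2$), and the observation $\iota_i\pi_i=e_i$ is a slightly cleaner way of packaging the same cancellation at the internal junctions.
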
 

\begin{proof}
Thanks to~\cref{lemma: trace-omega} the map $ \Omega_R^*(A) \to  \Omega_{R^f}^*(A^f)$ is given by
\[
(a_0d a_1\cdots a_m)^f=\sum_{i_0,\dots,i_m}q_{i_0}a_0p_{i_1}d(q_{i_1}a_1p_{i_2})\dots d(q_{i_m}a_mp_{i_0}). 
\]
Since $p_{i_j}\epsilon=p_{i_j}$ and $\epsilon q_{i_j}=q_{i_j}$ in our situation, that is either $p_{i_j}=\epsilon=q_{i_j}$ or $p_{i_j}=e_{21},q_{i_j}=e_{12}$. 
Now, if $a_0\otimes\cdots a_m$ belongs to the Hochschild complex of $\cC$, then these elements are completely 
determined by the $a_j$'s: indeed, if $a_j\in\cC(x_{j+1},x_j)$ then $q_{i_j}=\epsilon$ whenever $x_j\neq 2$ and $p_{i_{j+1}}=\epsilon$ whenever $x_{j+1}\neq 2$. 
  
From the proof of~\cref{prop: fusion} we have that $\cC(x,y)\to A^f$ is given by $a\mapsto q a p$, with 
\begin{itemize}
\item $q=\epsilon$ if $y\neq 2$, and $e_{12}$ otherwise. 
\item $p=\epsilon$ if $x\neq 2$, and $e_{21}$ otherwise. 
\end{itemize}
Hence the composed map $ C_{*}(\cC)\to  C_{*}(\cC^f)\to \Omega_{R^f}^*(A^f)$ is given by 
\[
a_0 \otimes \cdots \otimes a_m \mapsto q_{i_0}a_0p_{i_1} \otimes q_{i_1} a_2 p_{i_2} \otimes \cdots \otimes q_{i_m} a_m p_{i_0},
\]
with the same $p_{i_j}$'s and $q_{i_j}$'s as above, proving the commutativity. 
\end{proof}

\begin{lemma}\label{lemma: trace functoriality} 
Le $\omega \in \Omega^2_R(A)$. Then $\omega$ induces a map $\iota(\omega): D_{A/R}\to \Omega_{A/R}$. Under the fusion process, the following diagram commutes
 \[
 \xymatrix{ D_{A/R} \ar[r] \ar[d]^{\iota(\omega)} & D_{{A^+}/{R^+}} \ar[r] \ar[d]^{\iota({\omega^+}) } &  D_{A^f/R^f} \ar[d]^{ \iota(\mathrm{Tr}(\omega^+))=\iota(\omega^f) } \\
 \Omega_{A/R}  \ar[r]&  \Omega_{{A^+}/ {R^+}}\ar[r]&  \Omega_{A^f/ R^f}.}
 \]
\end{lemma}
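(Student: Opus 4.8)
The plan is to treat the two squares separately: the left one, whose horizontal arrows are induced by the algebra map $A\to A^+$, is essentially formal, while the right one, whose horizontal arrows are the trace maps along $\epsilon=1-e_2$, carries the genuine content. In both cases the mechanism is the same, namely the naturality of the contraction $\delta\mapsto\iota_\delta(\omega)$ with respect to the horizontal maps, once one keeps track of where the elements of $A$ appearing in $\omega$ and in the Sweedler legs $\delta(a)'\otimes\delta(a)''$ are sent. Throughout I would reduce to a generating $2$-form $\omega=a_0\,da_1\,da_2$ and use the explicit formula $i_\delta(p\,dq\,dr)=p\,\delta(q)'\otimes\delta(q)''\,dr-p\,dq\,\delta(r)'\otimes\delta(r)''$ together with $\iota_\delta={}^{\circ}i_\delta$.

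For the left square, recall that $\omega^+$ is the image of $\omega$ along $A\to A^+$ and that $\delta^+$ is the induced $R^+$-linear double derivation, whose Sweedler components on elements of $A$ are the images of those of $\delta$ under $A\otimes A\to A^+\otimes A^+$ (it annihilates all of $R^+$, in particular the adjoined matrix units). Since $\omega^+$ only involves elements coming from $A$, and $\delta^+$ agrees with $\delta$ on exactly those elements, the displayed formula for $i_\delta$ shows that $i_{\delta^+}(\omega^+)$ is simply the image of $i_\delta(\omega)$ under $A\otimes A\to A^+\otimes A^+$; applying ${}^{\circ}$ and the product then gives $\iota_{\delta^+}(\omega^+)=\bigl(\iota_\delta(\omega)\bigr)^+$. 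Hence the left square commutes by pure functoriality, and it suffices to verify this on generators.

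The right square is the crux. Here the horizontal arrows are the trace maps, which by \cref{lemma: tromega} read $\mathrm{Tr}(\delta^+)=\epsilon\delta^+\epsilon+e_{12}\delta^+e_{21}$ on double derivations and $\mathrm{Tr}(\omega^+)=\epsilon\omega^+\epsilon+e_{12}\omega^+e_{21}$ on forms (and likewise on the target $1$-forms). One must prove
\[
\iota_{\mathrm{Tr}(\delta^+)}\bigl(\mathrm{Tr}(\omega^+)\bigr)=\mathrm{Tr}\bigl(\iota_{\delta^+}(\omega^+)\bigr)=\epsilon\,\iota_{\delta^+}(\omega^+)\,\epsilon+e_{12}\,\iota_{\delta^+}(\omega^+)\,e_{21}.
\]
Expanding the left-hand side over the four combinations of the two summands of $\mathrm{Tr}(\delta^+)$ and $\mathrm{Tr}(\omega^+)$, the computation is governed by the Peirce relations $\epsilon e_{12}=e_{12}$, $e_{12}\epsilon=0$, $e_{21}\epsilon=e_{21}$, $\epsilon e_{21}=0$ and $e_{12}e_{21}=e_1$, $e_{21}e_{12}=e_2$: these force the two mixed terms $\iota_{\epsilon\delta^+\epsilon}(e_{12}\omega^+e_{21})$ and $\iota_{e_{12}\delta^+e_{21}}(\epsilon\omega^+\epsilon)$ to vanish, and identify the two remaining diagonal terms with $\epsilon\,\iota_{\delta^+}(\omega^+)\,\epsilon$ and $e_{12}\,\iota_{\delta^+}(\omega^+)\,e_{21}$ respectively. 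Equivalently, and more conceptually, one may argue through the Morita isomorphism $\Psi_{\Omega_{A/R}}$ of \cref{lemma: trace-omega}, under which $\Omega_{A^f/R^f}\simeq\epsilon\,\Omega_{R^+}(A^+)\,\epsilon$: since contraction is built entirely from the tensor-algebra structure $\Omega^*_RA=T_A(\Omega_{A/R}[-1])$ and the duality pairing $D_{A/R}=\Omega_{A/R}^{\vee}$, both of which $\Psi$ preserves, the contraction of $\omega^f=\mathrm{Tr}(\omega^+)$ corresponds to that of $\omega^+$.

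The main obstacle is precisely this right square. Unlike the maps $(-)^+$, the trace map $\mathrm{Tr}$ is not an algebra homomorphism; by \cref{lemmedetrace} it is only an isomorphism modulo commutators, so the contraction cannot be transported along it naively. The delicate point is that the non-additive conjugation by the matrix units $e_{12},e_{21}$ must be shown compatible with the bilinear contraction pairing, i.e.\ that the idempotent insertions on the double-derivation side and on the form side land in matching Peirce components so that the pairing survives while the mixed components pair to zero. This bookkeeping of the Sweedler legs $\delta(a)'\otimes\delta(a)''$ through the sandwichings is where the care lies; once the Peirce relations are in place, it reduces to a finite case check on the generators $\omega=a_0\,da_1\,da_2$.
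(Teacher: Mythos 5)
Your reduction agrees with the paper's: the left square is formal, and the right square amounts to proving $\iota_{\mathrm{Tr}(\delta)}(\mathrm{Tr}(\omega))=\mathrm{Tr}(\iota_{\delta}(\omega))$ by expanding both traces via \cref{lemma: tromega}. However, the central claim of your expansion is false: the mixed terms do \emph{not} vanish, and the diagonal terms do \emph{not} individually equal $\epsilon\,\iota_{\delta}(\omega)\,\epsilon$ and $e_{12}\,\iota_{\delta}(\omega)\,e_{21}$. Indeed, using $\iota_{a\delta b}=a\,\iota_{\delta}\,b$ together with the identity $\iota_{\delta}(a\,\omega\, b)=\iota_{\delta}(ba\,\omega)$ for $a,b\in R^+$ (which one checks directly from the formula for $i_{\delta}$, or deduces from the fact that $\iota_{\delta}={}^{\circ}i_{\delta}$ kills graded commutators and hence descends to $\mathrm{DR}$), one finds
\[
\iota_{\epsilon\delta\epsilon}(\epsilon\omega\epsilon)=\epsilon\,\iota_{\delta}(\epsilon\omega)\,\epsilon,
\qquad
\iota_{\epsilon\delta\epsilon}(e_{12}\omega e_{21})=\epsilon\,\iota_{\delta}(e_{21}e_{12}\omega)\,\epsilon=\epsilon\,\iota_{\delta}(e_{2}\omega)\,\epsilon .
\]
The second term is nonzero in general, since the Sweedler legs $\delta(q)',\delta(q)''$ are arbitrary elements of $A^+$ and survive the sandwiching by $\epsilon$; your Peirce relations act only on the idempotents, not on these legs. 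What actually happens is that the diagonal and mixed terms combine in pairs: their sum is $\epsilon\,\iota_{\delta}((\epsilon+e_{2})\omega)\,\epsilon=\epsilon\,\iota_{\delta}(\omega)\,\epsilon$ precisely because $\epsilon\epsilon+e_{21}e_{12}=\epsilon+e_{2}=1$, and likewise for the $e_{12}(-)e_{21}$ pair. So the missing idea is the cyclic invariance of the reduced contraction, equivalently the fact that $\iota_{\delta}(\mathrm{Tr}(\omega))=\iota_{\delta}(\omega)$ because $\mathrm{Tr}(\omega)\equiv\omega$ modulo commutators (\cref{lemmedetrace}); this is exactly how the paper closes the computation, first pulling the derivation sandwich outside via $\iota_{a\delta b}=a\,\iota_{\delta}\,b$ and then discarding the trace on the form.

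Your alternative ``conceptual'' route through the Morita isomorphism $\Psi$ of \cref{lemma: trace-omega} is morally sound but, as stated, presupposes that the contraction pairing is compatible with $\mathrm{Tr}$ on both the form and the double-derivation side, which is precisely the content of the lemma; it cannot substitute for the computation without an argument that $\Psi$ intertwines the two duality pairings.
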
 
\begin{proof}
The commutativity of the left hand side square follows immediately from definitions and the commutativity of the right hand side square means that 
\[
\iota_{\mathrm{Tr}(\delta)} (\mathrm{Tr}(\omega) ) = \mathrm{Tr} (\iota_{\delta} (\omega) )
\]
for all $\omega \in  \Omega^2_{{R^+}}({A^+})$ and $\delta \in D_{{A^+}/{R^+}}$. We prove this now.
Recall that the bimodule structure on $D_{A/R}$ is induced by the inner one on $A\otimes_RA$. 
We know from the proof of~\cite[Lemma 2.8.6]{CBEG} 
that $\iota_{a\delta b}=a\iota_\delta b$. We thus have, thanks to~\cref{lemma: tromega},
\begin{align*}
\iota_{\mathrm{Tr}(\delta)} (\mathrm{Tr}(\omega) )&=\iota_{\epsilon\delta\epsilon+e_{12}\delta e_{21}} (\mathrm{Tr}(\omega) )\\
&=\epsilon\iota_{\delta} (\mathrm{Tr}(\omega) )\epsilon+e_{12}\iota_{\delta } (\mathrm{Tr}(\omega) )e_{21}\\
&=\epsilon\iota_{\delta} (\epsilon\omega\epsilon+e_{12}\omega e_{21})\epsilon+e_{12}\iota_{\delta } (\epsilon\omega\epsilon+e_{12}\omega e_{21} )e_{21}\\
&=\epsilon\iota_{\delta} (\omega)\epsilon+e_{12}\iota_\delta(\omega) e_{21}\\
&=\mathrm{Tr} (\iota_{\delta} (\omega) )
\end{align*}
as wished.
\end{proof}

\subsection{Fusion and $1$-smoothness}

We start with the following notion simply called ``smoothness'' in~\cite{CBEG} or~\cite{VdB}.

\begin{definition}\label{1sm}We call  an $R$-algebra $A$ \emph{$1$-smooth} if it is finitely generated over $R$ and formally smooth in the sense of~\cite[\S19]{LecGinz}, meaning that $\Omega_{A/R}$ is a projective $A$-bimodule.\end{definition}

It implies that $A$ has projective dimension at most 1 and that we may (and will) use short resolutions.
Note that it implies smoothness of associated representation schemes, but we call it $1$-smooth in order to emphasize that it is way 
more demanding than the notion of (homological) smoothness we use in previous works~\cite{BCS,BCS2} for dg-categories (see also~\cref{cyrecap}), 
following e.g.~\cite{KellerDG}.

In the sequel, assume that $A=A_\cC$ where $\cC$ has a finite number of objects, and $R=\oplus_{e\in Ob(\cC)}ke$. 
\begin{proposition}
If $A$ is $1$-smooth over $R$, then so is $A^f$ over $R^f$.
\end{proposition}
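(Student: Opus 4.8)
The plan is to factor the fusion operation into two independent steps. Recall from \cref{def: fusion} that $A^f = \epsilon A^+ \epsilon$ with $A^+ = A \coprod_R R^+$ and $\epsilon = 1-e_2$, that $R^f = \epsilon R^+ \epsilon$, and that $\epsilon$ is full in $R^+$, i.e. $R^+ \epsilon R^+ = R^+$ (witnessed by $1 = \epsilon\epsilon\epsilon + e_{21}\epsilon e_{12}$). Thus fusion is the composite of the cobase change $A \rightsquigarrow A^+$ along $R \to R^+$ and the passage to the corner algebra $A^+ \rightsquigarrow \epsilon A^+ \epsilon$. I would first show that $A^+$ is $1$-smooth over $R^+$, and then that $1$-smoothness is inherited by the corner $A^f = \epsilon A^+ \epsilon$ over $R^f = \epsilon R^+ \epsilon$.

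For the first step, finite generation is immediate: $A^+ = A \coprod_R R^+$ is generated over $R^+$ by the finitely many generators of $A$ over $R$, and $R^+ = M_2(k) \oplus R_{\geq 3}$ is finite-dimensional over $k$. For formal smoothness I would use its stability under cobase change: since $A$ is formally smooth over $R$ it has the lifting property against square-zero $R$-extensions, and a pushout argument promotes any $R^+$-linear lifting problem for $A^+$ to an $R$-linear one for $A$. Indeed, given a square-zero extension $S \to S/I$ of $R^+$-algebras and an $R^+$-linear map $A^+ \to S/I$, one restricts to $A$, lifts $R$-linearly using smoothness of $A$, and observes that this lift and the structure map $R^+ \to S$ agree on $R$ and hence glue to a map $A^+ = A \coprod_R R^+ \to S$ by the universal property of the pushout. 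As $R$ and $R^+$ are separable $k$-algebras, this lifting property is equivalent to $\Omega_{A^+/R^+}$ being projective as an $A^+$-bimodule (Cuntz--Quillen, \cite{CQ}), so $A^+$ is $1$-smooth over $R^+$.

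For the second step, I would invoke the trace/Morita machinery of \cref{fusion3.2}. Applying the computation $e\Omega_{A/R}e \simeq \Omega_{eAe/eRe}$ from the proof of \cref{lemma: trace-omega} to $A^+$, $R^+$ and the full idempotent $\epsilon$ gives a canonical isomorphism $\Omega_{A^f/R^f} \simeq \epsilon\,\Omega_{A^+/R^+}\,\epsilon$. It therefore suffices to check that the additive functor $M \mapsto \epsilon M \epsilon$ preserves projectivity. This follows from \cref{lemma: trace}: specialising $\Psi_{A^+}$ to $M = N = A^+$ and passing to corners yields $\epsilon\,(A^+ \otimes_{R^+} A^+)\,\epsilon \simeq \epsilon A^+ \epsilon \otimes_{\epsilon R^+ \epsilon} \epsilon A^+ \epsilon = A^f \otimes_{R^f} A^f$, so the functor sends the free relative $A^+$-bimodule to the free relative $A^f$-bimodule. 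Being additive, it sends a direct summand of $\bigoplus A^+ \otimes_{R^+} A^+$ to a direct summand of $\bigoplus A^f \otimes_{R^f} A^f$; hence $\epsilon\,\Omega_{A^+/R^+}\,\epsilon \simeq \Omega_{A^f/R^f}$ is projective. Finite generation of $A^f$ over $R^f$ is again Morita-type: inserting the resolution of unity $1 = \sum_i p_i \epsilon q_i$ between consecutive factors of a word expresses $A^f$ as generated over $R^f$ by the finite set $\{\epsilon q_i x p_j \epsilon\}$, where $x$ ranges over the generators of $A^+$ over $R^+$.

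The main obstacle I anticipate is the first step: making precise that formal smoothness descends along the pushout while tracking the change of base $R \to R^+$. The cleanest route is the lifting-property characterisation rather than manipulating $\Omega_{A^+/R^+}$ directly, together with the observation that separability of $R$ and $R^+$ both removes any discrepancy between relative and absolute projectivity and guarantees that all square-zero extensions split over the base. The second step is then essentially formal, being a direct consequence of the isomorphisms already established in \cref{lemma: trace} and \cref{lemma: trace-omega}.
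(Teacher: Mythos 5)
Your proof is correct, and its overall architecture --- first pass from $A$ to $A^+$, then descend to the corner algebra $\epsilon A^+\epsilon$ --- matches the paper's; in particular your second step (identifying $\Omega_{A^f/R^f}$ with $\epsilon\,\Omega_{A^+/R^+}\,\epsilon$ via \cref{lemma: trace-omega}, and using $\epsilon\,(A^+\otimes_{R^+}A^+)\,\epsilon\simeq A^f\otimes_{R^f}A^f$ from \cref{lemma: trace} to transport direct summands of free bimodules) is essentially verbatim the paper's argument. Where you genuinely diverge is the first step. The paper proceeds by explicit computation: it identifies $A^+$ with $A\otimes_R R^+$, uses flatness of $R^+$ over $R$ to obtain $\Omega_{A^+/R^+}\simeq (A^+)^e\otimes_{A^e}\Omega_{A/R}$, and deduces projectivity by base change. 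You instead use the Cuntz--Quillen lifting-property characterisation of formal smoothness (legitimate here since $R$ and $R^+$ are separable, and consistent with \cref{1sm}) and verify stability of the lifting property under the cobase change $A\rightsquigarrow A\coprod_R R^+$ purely from the universal property of the coproduct; your gluing step is sound because the lift $A\to S$ is an $R$-algebra map and hence agrees with $R^+\to S$ on $R$, and uniqueness in the universal property guarantees it really lifts the given map. Your route has the merit of requiring no explicit description of the amalgamated free product (the identification $A^+\simeq A\otimes_R R^+$ invoked in the paper itself rests on Van den Bergh's structure results for $A^+$), whereas the paper's computation yields the concrete isomorphism $\Omega_{A^+/R^+}\simeq \Omega_{A/R}\otimes_R R^+$ that is reused later in the text. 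You also take care of finite generation of $A^f$ over $R^f$, which the paper's proof leaves tacit.
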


\begin{proof}
Recall that ${A^+}=A \otimes_R {R^+}$. 
By definition $\Omega_{{A^+}/{R^+}} $ is the kernel of the multiplication map $m^+:{A^+} \otimes_{{R^+}} {A^+} \rightarrow {A^+} $ which can be identified with 
\[
\xymatrix{
{R^+} \otimes_R A \otimes_R A\otimes_R R^+\ar[rrr]^-{\id \otimes m\otimes\id }&&& {R^+} \otimes_R A\otimes_R R^+.}
\] 
Since $R$-modules are $Ob(\cC)\times Ob(\cC)$-graded $k$-vector space, $R^+$ is flat over $R$ and
\[
		  \Omega_{{A^+}/{R^+}}\simeq  (R^+)^e\otimes_{R^e} \Omega_{A/R}  
\simeq (R^+)^e\otimes_{R^e}A^e\otimes_{A^e} \Omega_{A/R}
\simeq (A^+)^e\otimes_{A^e} \Omega_{A/R}. 
\]
Since $\Omega_{A/R}$ is a projective $A$-bimodule,  $\Omega_{{A^+}/{R^+}} $  is a projective ${A^+}$-bimodule. 

Then, we know that $\Omega_{A^f/R^f}= e \Omega_{{A^+}/{R^+} }e$ from~\cref{lemma: trace-omega}. Since $ \Omega_{{A^+}/{R^+} } $ is a 
projective ${A^+} $-bimodule, there exists $r\in\mathbb N$ such that $\Omega_{A^f/R^f}$ is a direct summand of 
$e( {A^+} \otimes_{R^+} {A^+})^re=(e {A^+} \otimes_{R^+} {A^+}e)^r\simeq (A^f\otimes_{R^f} A^f)^r$ by~\cref{lemma: trace}. 
Hence $\Omega_{A^f/R^f}$ is a projective $A^f$-bimodule. 
\end{proof}


\section{Calabi--Yau versus bisymplectic structures}\label{section4}

In this section, we recall the notion of Calabi--Yau structures for dg-categories as in~\cite{BD1,ToEMS} and bisymplectic structures on algebras as in~\cite{CBEG}. 
We then introduce the fusion process for bisymplectic structures in analogy with the fusion for double Poisson structures from~\cite{VdB}. 
We show that a relative Calabi--Yau structure on $\coprod_{c\in\mathrm{Ob}(\cC)}k[x_c] \to \cC$, $\cC$ a $k$-linear category, gives rise to a bisymplectic one on 
the path algebra $A_\cC$ associated to $\cC$. Finally, we prove that the composition with the ``additive pair-of-pants'' Calabi--Yau cospan 
induces fusion for the corresponding bisymplectic structures on $A_\cC$. 

\subsection{Calabi--Yau structures, absolute and relative}\label{cyrecap}

Our notation follows \cite{BCS,BCS2}. 
A dg-category $\cA$ is called \emph{(homologically) smooth} if $\cA$ is a perfect $\cA^e$-module. 
In this case, we have an following equivalence 
\[
(-)^\flat: \cat{HH}( \cA) \stackrel{\sim} \longrightarrow    \mathbb{R}\ul{\cat{Hom}}_{\cat{Mod}_{\cA^e}} (\cA^\vee, \cA),
\]
where $\cA^\vee$ is the dualizing bimodule. 
\begin{definition}\label{def:CY}
Let $\cA$ be a smooth dg-category. An \emph{$n$-Calabi--Yau structure} on $\cA$ is a negative cyclic class $c=c_0+uc_1+\cdots:k[n]\to \cat{HC}^-(\cA)$ 
such that the underlying Hochschild class $c^\natural=c_0:k[n]\to \cat{HH}(\cA)$ is non-degenerate, in the sense that $c_0^\flat:\cA^\vee[n]\to \cA$ is an equivalence. 
\end{definition}

Relative Calabi--Yau structures on morphisms and cospans of dg-categories where introduced by Brav--Dyckerhoff~\cite{BD1} following To\"en~\cite[\S5.3]{ToEMS}. 

\begin{definition}
An \emph{$n$-Calabi--Yau structure} on a cospan $\cA\overset{f}{\longrightarrow}\cat{C}\overset{g}{\longleftarrow}\cat{B}$ of smooth dg-categories 
is a homotopy commuting diagram 
\[
\xymatrix{
k[n] \ar[r]^{c_{\cat{B}}}\ar[d]_{c_{\cA}}  & \cat{HC}^-(\cat{B}) \ar[d] \\
\cat{HC}^-(\cA) \ar[r] & \cat{HC}^-(\cat{C})
}
\]
whose image under $(-)^{\natural}$ is non-degenerate in the following sense: $c_{\cA}^\natural$ and $c_{\cat{B}}^\natural$ are non-degenerate, and 
the homotopy commuting square 
\[
\xymatrix{
\cat{C}^\vee[n]\ar[r]^-{g^\vee}\ar[d]_-{f^\vee} 
& (\cat{B}^\vee[n])\overset{\mathbb{L}}{\underset{\cat{B}^e}{\otimes}}\cat{C}^e 
\overset{(c_{\cat{B}}^\natural)^\flat\otimes\mathrm{id}}
{\simeq} \cat{B}\overset{\mathbb{L}}{\underset{\cat{B}^e}{\otimes}}\cat{C}^e \ar[d]^-{g\otimes\mathrm{id}} \\
(\cA^\vee[n])\overset{\mathbb{L}}{\underset{\cA^e}{\otimes}}\cat{C}^e 
\overset{(c_{\cA}^\natural)^\flat\otimes\mathrm{id}}{\simeq} \cA\overset{\mathbb{L}}{\underset{\cA^e}{\otimes}}\cat{C}^e \ar[r]^-{f\otimes\mathrm{id}}
& \cat{C}
}
\]
is cartesian. 
We say that a morphism $g: \cA \longrightarrow \cC$ is \emph{relative $n$-Calabi--Yau} if the copsan 
$\cA\overset{f}{\longrightarrow}\cat{C}{\longleftarrow}\varnothing$ is $n$-Calabi--Yau. 
\end{definition}
We will also use the fact that by~\cite[Theorem 6.2]{BD1} $n$-Calabi--Yau cospans compose. 
It is immediate with the above definitions that an $n$-Calabi--Yau structure on $\varnothing\rightarrow\cat{C}\leftarrow\varnothing$ is the same 
as an $(n+1)$-Calabi--Yau structure on $\cat{C}$. 
Finally recall (see e.g.~\cite[Proposition 2.3]{BCS2}) that a non-degenerate Hochschild class on a smooth dg-category $\cA$ concentrated in degree zero 
admits a unique cyclic lift, making $\cA$ an Calabi--Yau category. 

\begin{example}\label{exrcy}
\begin{itemize}
\item The algebra $k[x]$ carries a $1$-Calabi--Yau structure. We call the Calabi--Yau structure induced by $1\otimes x \in \cat{HH}_1(k[x])$ the natural Calabi--Yau structure. 
\item Let $Q=(I,E)$ be a finite quiver where $I$ is the set of vertices and $E$ the set of arrows. Denote by $\overline{Q}$ the double quiver obtained by adding for every arrow $a\in E$ an arrow $a^*$ in the opposite direction. Consider the path algebra of the double quiver $A:=k \overline{Q}$.
There is a relative $1$-Calabi--Yau structure on the moment map $k[x]\to kA$, $x\mapsto\sum_{a\in E}[a,a^*]$, which is compatible with the natural one on $k[x]$, see~\cite[5.3.2]{BCS}.
\item The algebra $k[x^{\pm1}]$ carries a natural $1$-Calabi--Yau structure induced by $\frac12(x^{-1}\otimes x-x\otimes x^{-1}) \in \cat{HH}_1(k[x^\pm])$. This has been shown in \cite{BCS2} Section 3.1. See also \cref{1cypm} for the cyclic lift.
\end{itemize} 
\end{example}

The next example of a Calabi--Yau cospan was investigated thoroughly in Section 3.3 of \cite{BCS2} and related to the pair-of-pants. 

\begin{example}[Pair-of-pants]\label{popex} 
 The cospan 
\begin{equation}\label{equation-multiplicative pants} k[x^{\pm1}] \amalg k[y^{\pm1}]  \longrightarrow  k\langle x^{\pm1} ,y^{{\pm1}} \rangle\longleftarrow k[z^{\pm1}]
\end{equation}   where the rightmost map is $z\mapsto xy$, is a relative $1$-Calabi--Yau cospan with the Calabi--Yau structures  $\alpha_1(x)+\alpha_1(y)-\alpha_1(z) =b(\beta_1)\sim 0$ and homotopy $\beta_1:=y^{-1}\otimes x^{-1}\otimes xy-y\otimes y^{-1}x^{-1}\otimes x$. 
\end{example}

 We prove here the additive version of the previous example which we refer to as the additive pair-of-pants as opposed to the multiplicative pair-of-pants of the previous example. 
 
\begin{lemma} 
There exists a relative $1$-Calabi--Yau structure on 
\begin{equation}\label{equation-additive pants}
k[x]\coprod k[y] \longrightarrow k\langle x,y\rangle  \longleftarrow k[z]\,,
\end{equation}
where the rightmost map is $z\mapsto x+y$, such that the underlying absolute $1$-Calabi--Yau structures on $k[x]$, $k[y]$ and $k[z]$ are the natural ones.
\end{lemma}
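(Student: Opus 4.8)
The plan is to exhibit the structure explicitly, following the strategy used for the multiplicative pair-of-pants in~\cite{BCS2} (recalled in~\cref{popex}), while exploiting a simplification specific to the additive setting: here the required homotopy may be taken to be zero. \textbf{Setup and absolute structures.} First I would record that the three dg-categories $k[x]\coprod k[y]$, $k\langle x,y\rangle$ and $k[z]$ are smooth, being coproducts of polynomial and free algebras, and that each admits a length-one free bimodule resolution, $\Omega^1$ being free (on $dx,dy$ for $k\langle x,y\rangle$, and on a single generator for each of the polynomial algebras). Under the identification $C_*(A)\simeq\Omega^*_kA$ of~\cref{subsechhncform}, the natural absolute $1$-Calabi--Yau class on $k[x]$ is represented by $1\otimes x=dx$, and a direct check as in~\cref{1cypm} gives $\iota_E(dx)=0$ and $d(dx)=0$, so that $dx$ is already a cycle for $\iota_E-ud$ and hence coincides with its own negative cyclic lift; the same holds for $dy$ and $dz$. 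Nondegeneracy of these absolute classes is part of~\cref{exrcy}.

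\textbf{The homotopy-commuting square.} The two legs send $f_*(dx+dy)=dx+dy$ and $g_*(dz)=d(x+y)=dx+dy$ in $\Omega^*_k k\langle x,y\rangle$, and this common element is a negative cyclic cycle. Thus the square of negative cyclic complexes commutes \emph{strictly}, and I would take the trivial homotopy; this already pins down the candidate relative datum with the prescribed underlying absolute structures. This is exactly where the additive case is easier than~\cref{popex}: multiplicatively $z\mapsto xy$ forces $g_*(\alpha_1(z))\ne\alpha_1(x)+\alpha_1(y)$, so one must supply the nonzero homotopy $\beta_1$.

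\textbf{Nondegeneracy --- the main obstacle.} It remains to verify that the associated square of $k\langle x,y\rangle$-bimodules is homotopy cartesian. Writing $P=k\langle x,y\rangle$ and using the length-one resolutions, the bottom-left corner becomes $P\otimes_{k[x]}P\oplus P\otimes_{k[y]}P$, the top-right corner becomes $P\otimes_{k[z]}P$ with $z=x+y$, the bottom-right corner is $P$, and the top-left corner $P^\vee[1]$ is computed from the dual of the two-generator resolution; all four maps then become explicit. I would reduce cartesianness to checking that the map induced on horizontal cofibers is an equivalence, which in turn follows from two elementary facts: the linear identity $(x+y)\otimes1-1\otimes(x+y)=(x\otimes1-1\otimes x)+(y\otimes1-1\otimes y)$ in $P\otimes P$, reflecting $dz\mapsto dx+dy$; and the freeness of $P$ as a left and as a right module over each of $k[x]$, $k[y]$ and $k[z]$, where for $k[z]$ one uses the change of generators $k\langle x,y\rangle\simeq k\langle x,z\rangle$. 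This freeness guarantees that each induced map of free $P$-bimodules is injective with the expected cokernel, and the bookkeeping of generators ($dx,dy$ against the single $dz$) makes the two cofibers agree. This bimodule computation is the only nonformal step; once it is in place the data of the previous paragraph constitute a relative $1$-Calabi--Yau structure of the required kind, and one may cross-check the outcome against the composition theorem~\cite[Theorem 6.2]{BD1}.
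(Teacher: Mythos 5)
Your proposal is correct and follows essentially the same route as the paper: both take the zero homotopy (justified by the strict equality $dz\mapsto dx+dy$ and uniqueness of cyclic lifts in degree $0$), and both verify non-degeneracy by writing out the square with the explicit length-one free bimodule resolutions of $k[x]$, $k[y]$, $k[z]$ and $k\langle x,y\rangle$ and checking cartesianness directly. The only cosmetic difference is that you phrase the final check in terms of comparing horizontal cofibers, whereas the paper identifies $\mathcal{A}\otimes_{\mathcal{A}^e}\mathcal{B}^e\simeq\mathcal{B}^e[1]\oplus\mathcal{B}^e$ and reads off the cartesian property; these amount to the same computation.
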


\begin{proof} 
 The algebra ${\mathcal B}:=k\langle x,y \rangle$ has a 
small resolution as a ${\mathcal B}$-bimodule: 
\[
({\mathcal B}^e)^{\oplus2}[1] \oplus {\mathcal B}^e
\]
with differential sending $(1\otimes1,0)$ to $x\otimes 1-1\otimes x$, and $(0,1\otimes1)$ to $y\otimes 1-1\otimes y$. 
Therefore 
\[
{\mathcal B}^\vee\simeq {\mathcal B}^e\oplus ({\mathcal B}^e)^{\oplus2}[-1]
\]
with differential sending $1\otimes 1$ to $(x\otimes 1-1\otimes x,y\otimes 1-1\otimes y)$. 

The canonical Calabi--Yau structures on $\mathcal A:=k[x]$ are given by $\alpha_1(x)= 1\otimes x \in \mathrm{HH}_1(\mathcal{A})$. Note that $\alpha_1$ has a unique cyclic lift by Proposition 2.3 of \cite{BCS2} which we denote $\alpha$. 
The following diagram induced by the natural Calabi--Yau structures on $\mathcal A$ is
strictly commutative
\[
\xymatrix{ {\mathcal B}^\vee[1] \ar[d]\ar[r] &  {\mathcal A}^\vee \underset{{\mathcal A}^e}{\otimes}{\mathcal B}^e[1] \overset{\alpha_1(x+y)}{\simeq}  {\mathcal A}\underset{{\mathcal A}^e}{\otimes}{\mathcal B}^e \ar[d]  \\
( {\mathcal A}^{\oplus2})^\vee \underset{{\mathcal A}^e}{\otimes} {\mathcal B}^e [1] \overset{\alpha_1(x) +\alpha_1(y)}{\simeq} {\mathcal A}^{\oplus 2} \underset{{\mathcal A}^e}{\otimes}{\mathcal B}^e\ar[r] & {\mathcal B}
}
\]
Using the small resolution of $\mathcal{A}$ we find ${\mathcal A}\underset{{\mathcal A}^e}{\otimes}{\mathcal B}^e \simeq {\mathcal B}^e[1] \oplus {\mathcal B}^e$, with differential sending $1\otimes1$ to $x \otimes 1 -1\otimes x$. 
Hence, we get that the diagram is cartesian. The zero homotopy is the unique lift in cyclic homology between $\alpha(z)$ and $\alpha(x)+\alpha(y)$. 
Therefore the cospan~\eqref{equation-additive pants} carries a relative $1$-Calabi--Yau structure. 
\end{proof}

\subsection{Bisymplectic structures and fusion}

Let $A$ be an $R$-algebra, where $R= ke_1 \oplus \cdots \oplus ke_n$ is based on pairwise orthogonal idempotents as usual. 
We define gauge elements $E_i=(a\mapsto ae_i\otimes e_i-e_i\otimes e_ia)\in D_{A/R}$ and recall notions introduced in~\cite{CBEG}.

\begin{definition}
 We call   $\omega \in \Omega^2_R(A)$ a \emph{bisymplectic} structure on $A$, if 
  \begin{itemize}
 \item $\omega $ is {closed}, that is $d \omega=0 \in \DR_R(A)$ 
 \item $\omega$ is{ non-degenerate}, that is $\iota(\omega): D_{A/R} \to \Omega_{A/R}, \delta\mapsto\iota_\delta(\omega)$ is an isomorphism. 
 \end{itemize} 
An element $\mu=(\mu_i)\in\oplus_ie_iAe_i$ is a \emph{moment map} for a bisymplectic algebra $(A,\omega)$ if\[
d\mu_i=\iota_{E_i}(\omega)\]
for all $i\in I$.
 \end{definition} 
 
A moment map always exists, see~\cite[A.7]{VdB}.
Now we discuss fusion of bisymplectic structures and aim to prove~\cite[Proposition 2.6.6]{VdB}.
We use the notation of \cref{section: fusion}. Recall that  we have trace maps $A \to A^f, a \mapsto  a^f=\epsilon a \epsilon+ e_{12} a e_{21}$,  
$\Omega_R^* (A)\to \Omega_{R^f}^* (A^f)$ and $D_R^*(A) \to D_{R^f}^*(A^f)$.
Let $A$ be an algebra equipped with a bisymplectic structure $\omega$, with moment map $\mu$. We define $\mu_i^{f\!\!f}=\mu_i^f=\mu_i $ 
for $i\ge 3$ and \[\mu_1^{f\!\!f}= \mu_1 +e_{12} \mu_2 e_{21} = \mu_1^f+ \mu_2^f.\]

\begin{lemma}
The form $\omega^f\in\Omega_{R^f}^2 (A^f)$ is a bisymplectic structure on $A^f$, with moment map $\mu^{f\!\!f}$. 
\end{lemma}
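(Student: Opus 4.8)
The plan is to verify the two defining properties of a bisymplectic structure for $\omega^f$, namely closedness and non-degeneracy, and then to check the moment map condition separately. For closedness, I would argue that the trace map $(-)^f \colon \DR_R(A) \to \DR_{R^f}(A^f)$ commutes with the mixed differential $d$. Since $\omega$ is closed, meaning $d\omega = 0$ in $\DR_R(A)$, applying the trace map gives $d(\omega^f) = (d\omega)^f = 0$ in $\DR_{R^f}(A^f)$. The compatibility of $(-)^f$ with $d$ should follow from the explicit description of the trace map on forms in \cref{lemma: trace-omega}, since it is built out of the universal derivation $d$ applied to the relevant idempotent-conjugated elements; one checks $d$ commutes with multiplication by $\epsilon$ and $e_{12},e_{21}$ appropriately.

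For non-degeneracy, the key input is \cref{lemma: trace functoriality}, which tells us that the contraction operator is compatible with fusion: the square
\[
\xymatrix{ D_{{A^+}/{R^+}} \ar[r] \ar[d]^{\iota({\omega^+}) } &  D_{A^f/R^f} \ar[d]^{ \iota(\omega^f) } \\
  \Omega_{{A^+}/ {R^+}}\ar[r]&  \Omega_{A^f/ R^f}}
\]
commutes, and the horizontal maps are the isomorphisms $e(-)e$ coming from \cref{lemma: trace}. Since $\omega$ is non-degenerate, $\iota(\omega)\colon D_{A/R}\to\Omega_{A/R}$ is an isomorphism; I would first show $\iota(\omega^+)$ is an isomorphism on $A^+$ (using that ${A^+}=A\otimes_R R^+$ and that everything is obtained by the flat base change $R\to R^+$, as in the $1$-smoothness proposition), and then transport this through the $e(-)e$ identifications $D_{A^f/R^f}\simeq e D_{{A^+}/{R^+}} e$ and $\Omega_{A^f/R^f}\simeq e\Omega_{{A^+}/{R^+}}e$ to conclude $\iota(\omega^f)$ is an isomorphism. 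Here $e=\epsilon$ is the relevant idempotent of $R^+$.

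Finally, for the moment map condition I must verify $d\mu_i^{f\!\!f}=\iota_{E_i}(\omega^f)$ for each $i$ in the index set of $R^f$. For $i\ge 3$ the gauge element $E_i$ is essentially unchanged under fusion and $\mu_i^{f\!\!f}=\mu_i^f$, so the identity follows by applying the trace map to $d\mu_i=\iota_{E_i}(\omega)$ together with \cref{lemma: trace functoriality}. The interesting case is $i=1$, where the fused gauge element $E_1$ on $A^f$ corresponds under fusion to the sum $E_1+E_2$ on $A$, matching the definition $\mu_1^{f\!\!f}=\mu_1^f+\mu_2^f$. So I would compute $\iota_{E_1}(\omega^f)=\mathrm{Tr}\big(\iota_{E_1+E_2}(\omega)\big)=\mathrm{Tr}\big(d\mu_1+d\mu_2\big)=d(\mu_1^f+\mu_2^f)=d\mu_1^{f\!\!f}$, using again that $(-)^f$ commutes with $d$ and the compatibility with contraction.

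I expect the main obstacle to be the bookkeeping in the $i=1$ case of the moment map: precisely matching the image of the gauge element $E_1$ on $A^f$ with the trace of $E_1+E_2$ on $A^+$, keeping track of how the double derivations $E_i=(a\mapsto ae_i\otimes e_i - e_i\otimes e_i a)$ interact with the idempotent conjugations $\epsilon(-)\epsilon$ and $e_{12}(-)e_{21}$ appearing in \cref{lemma: tromega}. The closedness and non-degeneracy arguments are comparatively formal once the compatibility lemmas from \cref{section: fusion} are in hand.
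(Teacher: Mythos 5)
Your proposal is correct and follows essentially the same route as the paper: closedness from compatibility of the trace map with $d$, non-degeneracy by first establishing that $\iota(\omega^+)$ is an isomorphism via the flat base change $-\otimes_R R^+$ and then transporting through \cref{lemma: trace functoriality} (the paper phrases this last transport as a surjectivity-plus-matching-kernels argument rather than restriction to idempotent corners, but the two are equivalent given the $A^+\otimes_{R^+}A^+$-linearity of $\iota(\omega^+)$), and the moment map condition from the identity $F_1=E_1^f+E_2^f$, which the paper takes from Van den Bergh rather than reproving.
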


\begin{proof} 
By definition, $\omega^f \in \Omega^2_{R^f}A^f$ is a closed form. 
We need to show that $\iota(\omega^f): D_{A^f/R^f} \to \Omega_{A^f/R^f}$ is an isomorphism. Recall from \cref{lemma: trace functoriality}, that 
we have the following commutative diagram 
\[
 \xymatrix{ D_{A/R} \ar[r] \ar[d]^{\iota(\omega)} & D_{{A^+}/{R^+}} \ar[r] \ar[d]^{\iota({\omega^+}) } &  D_{A^f/R^f} \ar[d]^{ \iota(\omega^f) } \\
 \Omega_{A/R}  \ar[r]&  \Omega_{{A^+}/ {R^+}}\ar[r]&  \Omega_{A^f/ R^f}.}
 \]
 Now $\iota({\omega^+})$ is an isomorphism as it is obtained from $\iota(\omega)$ by an extension of rings $-\otimes_R {R^+}$, where $R$ is semi-simple. 

We observe that the map $\mathrm{Tr}: \Omega_{{A^+}/{R^+}} \to \Omega_{A^f/R^f}$ is surjective. As $\iota({\omega^+})$ is surjective,
 $\iota(\omega^f)$ is also surjective by \cref{lemma: trace functoriality}. Furthermore, the kernel of $\mathrm{Tr}: \Omega_{{A^+}/{R^+}} \to \Omega_{A^f/R^f}$ 
 is given by $ \epsilon \Omega_{A/R} e_2 + e_2 \Omega_{A/R} \epsilon$ and the kernel of $\mathrm{Tr}: D_{{A^+}/{R^+}} \to D_{A^f/R^f}$ is 
 $\epsilon D_{{A^+}/ {R^+}} e_2+ e_2D_{{A^+}/ {R^+}} \epsilon$. The morphism $\iota({\omega^+})$ maps the two kernels bijectively to each other as 
 it is an ${A^+} \otimes_{{R^+} } {A^+}$-linear isomorphism. Furthermore, $\mathrm{Tr}: D_{{A^+}/ {R^+}} \to D_{A^f/R^f}$ is surjective. As a consequence, 
 $\iota(\omega^f)$ is also an isomorphism proving that $\omega^f$ is non-degenerate. 
 This shows that $\omega^f$ is a bisymplectic structure o, $A^f$. The moment map $\mu:=( \mu_i )_i$ associated to $\omega$ is determined by the condition  $d \mu_i =\iota_{E_i} (\omega)$. Denote by $F_i$ for $i\neq 2$ the gauge elements in $A^f$. By \cref{lemma: trace functoriality}  
\[d (\mu^f_i)=(d\mu_i)^f =( \iota_{E_i} (\omega))^f= \iota_{E_i^f} (\omega^f)= \iota_{F_i} (\omega^f)\]
 for $ i \not= 1,2$. We know from~\cite[Lemma 5.3.3]{VdB} that $F_1=E_1^f+E_2^f$, so
\[d(\mu_1^{f\!\!f})=d(\mu_1^f+\mu_2^f)= \iota_{E_1^f} (\omega^f) + \iota_{E_2^f} (\omega^f) = \iota_{F_1} (\omega^f)\]
as expected.
\end{proof} 

\subsection{From Calabi--Yau structures to bisymplectic structures}

Let $\mathcal C$ be a $k$linear category with set of objects $I=\{1, \dots , n\}$ (in particular, we assume that $\cC$ is concentrated in degree $0$). 
Set $e_i=\mathrm{id}_i$, $R= \oplus_{i\in I} ke_i$, 
$\hat{\mathcal R}= \coprod_{i\in I} k[x_i]$ and $A=A_\cC$. 
Note that $\hat{R}:=A_{\hat{\mathcal R}}\simeq \bigoplus_{i\in I} k[x_i]$. 
We assume that we are given an endomorphism of each object $i$. 
This amounts to having a $k$-linear functor $\mu: \hat{\mathcal R} \to \mathcal C$ or, equivalently, an $R$-algebra morphism $\hat{\mathcal R}\to A$. 
Let us set $\mu_i:=\mu(x_i) \in e_iA e_i$.

\begin{theorem}\label{rcybisym}
Assume we have a relative $1$-Calabi--Yau structure on  $\mu: \hat{\mathcal R} \to \cC$ inducing the natural Calabi--Yau structure on each $k[x_i]$ and assume that $A_\cC$ is $1$-smooth. Then $A_{\cC}$ is bisymplectic with moment map $ \sum_{i=1}^n \mu_i$. 
\end{theorem}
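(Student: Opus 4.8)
The plan is to unwind the relative Calabi--Yau condition into concrete statements about noncommutative $2$-forms and double derivations, and then recognize the bisymplectic axioms. First I would produce the closed $2$-form. The relative Calabi--Yau structure provides a negative cyclic class $c$ on $\cC$ together with a homotopy to the natural class on $\hat{\mathcal R}$; via the zig-zag of \cref{subsechhncform} relating negative cyclic homology to the de Rham side $(\bar\Omega_R^*A[\![u]\!]/\overline{[d\Omega,d\Omega]},\iota_E-ud)$, I would represent $c$ by a family $\omega=\omega_1+u\omega_2+\cdots$ with $\omega_n\in\bar\Omega_R^{2n}A$. The cocycle condition $(\iota_E-ud)\omega=0$ gives $\iota_E\omega_1=0$ and $d\omega_1=\iota_E\omega_2$ in low degrees; in particular $d\omega_1=0$ in $\DR_R A$, which is exactly closedness of $\omega:=\omega_1$. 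Simultaneously, the compatibility with the natural structure on each $k[x_i]$ forces, after the identifications of \cref{1cypm}, that $\iota_{E_i}(\omega_1)=d\mu_i$, giving the moment map $\sum_i\mu_i$.

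Next I would extract non-degeneracy. The key is that the \emph{non-degeneracy of the underlying Hochschild class} $c^\natural$, expressed via the cartesian square in the definition of a relative Calabi--Yau cospan, is precisely what makes the contraction $\iota(\omega_1):D_{A/R}\to\Omega_{A/R}$ an isomorphism. Concretely, since $A=A_\cC$ is $1$-smooth, $\Omega_{A/R}$ is a projective $A$-bimodule and we may use the short resolution $0\to\Omega_{A/R}\to A\otimes_R A\to A\to 0$, so that $A^\vee$ and $\cC^\vee$ are computed by two-term complexes. I would translate the cartesian (pullback) condition relating $\cC^\vee[1]$, $\mathcal A^\vee\otimes_{\mathcal A^e}\cC^e$ and $\mathcal B$ into the statement that $(c_0)^\flat$, read through the identification $\mathrm{HH}_2(A)\simeq$ (cyclic classes of $\bar\Omega^2$) and through $(-)^\flat$, coincides up to the self-duality $\Omega_{A/R}^\vee\simeq D_{A/R}$ with the map $\iota(\omega_1)$. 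Thus ``$c_0^\flat$ is an equivalence'' becomes ``$\iota(\omega_1)$ is an isomorphism'', i.e.\ $\omega$ is non-degenerate.

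I expect the main obstacle to be the bookkeeping that pins down the precise $2$-form representing the relative class and identifies $c_0^\flat$ with $\iota(\omega_1)$: one must track how the Morita zig-zag $\bar C_*(\cC)\hookrightarrow\bar C_*(A)$, the Ginzburg--Schedler comparison \eqref{NCHKR}, and the $1$-smooth resolution interact, and check that the relative (cospan) cartesianness degenerates, in homological degree zero, to an isomorphism of $A$-bimodule maps rather than merely a quasi-isomorphism. This is where the hypothesis that $\cC$ is concentrated in degree $0$ and that $A_\cC$ is $1$-smooth does the real work, collapsing the derived statements to strict ones. Once the dictionary between the Hochschild $\flat$-operation and the contraction $\iota$ is established, closedness, the moment-map identity, and non-degeneracy all follow by reading off the components of the single cyclic cocycle, completing the proof.
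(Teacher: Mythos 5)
Your strategy is the same as the paper's: read the relative Calabi--Yau datum through the Ginzburg--Schedler comparison as a family $\omega_1+u\omega_2+\cdots$ of noncommutative forms, extract closedness and the moment-map identity from the low-degree components, and convert the non-degeneracy condition, via the short resolutions available by $1$-smoothness, into the statement that $\iota(\omega_1):D_{A/R}\to\Omega_{A/R}$ is an isomorphism. The second half of your argument (identifying the homotopy in the two-term-resolution diagram with $\iota(\omega_1)$ and deducing the isomorphism from the homotopy fiber sequence) matches the paper's proof essentially verbatim.

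There is, however, one slip in the first half that needs repairing, because as written two of your assertions contradict each other. For a \emph{relative} structure on $\mu:\hat{\mathcal R}\to\cC$ the datum is a null-homotopy of the image under $\mu$ of the natural negative cyclic class $\gamma=\sum_i 1\otimes x_i$ (which lifts trivially to negative cyclic homology since $B\gamma=0$); there is no independent class on $\cC$. Consequently the equation satisfied by the representing family is not the absolute cocycle condition $(\iota_E-ud)\omega=0$ but rather
\[
(\iota_E-ud)\Bigl(\sum_{k\ge0}u^k\omega_{k+1}\Bigr)=\mu(\gamma)=\sum_{i=1}^n d\mu_i .
\]
Its $u^0$-component \emph{is} the moment-map identity $\iota_E\omega_1=\sum_i d\mu_i$; this is not a separate consequence of ``compatibility with the natural structure on each $k[x_i]$'', and in particular $\iota_E\omega_1\neq 0$ in general, contradicting your claim that the cocycle condition gives $\iota_E\omega_1=0$. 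The $u^1$-component gives $d\omega_1=\iota_E\omega_2$, which vanishes in $\overline{\DR}_RA$ because $\iota_E$ takes values in commutators; that part of your argument is correct. With this single correction your outline agrees with the paper's proof.
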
 
\begin{proof} 
 The $1$-Calabi--Yau structure gives a homotopy $0 \sim \mu( \sum_{i=1}^n 1\otimes x_i) = \sum_{i=1}^n 1\otimes \mu_i$ which yields, thanks to~\cref{subsechhncform}, an element $\omega_1 \in  \Omega_R^2(A)$ satisfying $ \iota_{E} (\omega_1)= \sum_{i=1}^n d\mu_i$. Hence $ \mu$ is a moment map for $\omega_1$. 

It remains to show that $\omega_1$  is closed and non-degenerate. 
First note that $\gamma:=\sum_{i=1}^n 1\otimes x_i\in\Omega_R^1\hat{R}$ trivially lifts in negative cyclic homology as $B(\gamma)=0$. Then the Calabi--Yau structure is given by a family $\omega_k  \in  \bar{\Omega}_R^{2k} A$ satisfying\[
(\iota_E-ud )\left(\sum_{k\ge0} u^k\omega_{k+1}\right)= \mu( \gamma),\]
which implies $ d\omega_1=\iota_E (\omega_2)=0\in \overline\DR_RA$.
This proves the closedness of $\omega_1$. 

The (Calabi--Yau) non-degeneration property yields the homotopy fiber sequence
 \[
A^\vee[1]\to R^\vee[1]\otimes_{R^e}A^e\overset{\gamma_1}{\simeq} R\otimes_{R^e}A^e\to A. 
\]
Using short resolutions (thanks to the $1$-smoothness of $A$) we get the homotopy commuting diagram\[
\xymatrix{A^e\ar[r]^-{\mathrm{id}}\ar[d]_-{E}&A^e\ar[rrr]^-{\id}\ar[d]^{\mu\otimes \id - \id \otimes \mu }&&&A^e\ar[r]^-{d\mu}\ar[d]_{\mu\otimes \id - \id \otimes \mu }&\Omega_{A/R}\ar@{_(->}[d]\\
D_{A/R}\ar[r]_-{\mathrm{ev}_\mu}&A^e\ar[rrr]_-{\id }&&&A^e\ar[r]_-{\mathrm{id}}&A^e}\] 
The homotopy is given by $\iota(\omega_1): D_{A/R}\to\Omega_{A/R}$ 

Now as the Calabi--Yau structure is non-degenerate we have \[
A^\vee[1]\simeq\mathrm{hofib}\left( R^\vee[1]\otimes_{R^e}A^e\overset{\gamma_1}{\simeq} R\otimes_{R^e}A^e\to A\right).\]
In short resolutions, this yields a quasi-isomorphism between the vertical complexes
\[
\xymatrix{A^e\ar[r]^-{ \id}\ar[d]_-{E}&A^e\ar[d]^-{d\mu}\\
D_{A/R}\ar[r]_-{\iota(\omega_1)}&\Omega_{A/R}}\]
which in particular gives an isomorphism $\iota(\omega_1): D_{A/R} \to \Omega_{A/R}.$ 
\end{proof}

\begin{example}\label{exqcbeg}
Let $Q=(I,E)$ be a finite quiver where $I$ is the set of vertices and $E$ the set of arrows. Denote by $\overline{Q}$ the double quiver obtained by adding for every arrow $a\in E$ an arrow $a^*$ in the opposite direction. Consider the path algebra of the double quiver $A:=k \overline{Q}$. We have
\begin{itemize}
\item a relative $1$-Calabi--Yau structure on $ \mu: k[x] \to A$, $x\mapsto\sum_{a\in E}[a,a^*]$ from \cref{exrcy};
\item a  bisymplectic structure $\omega=\sum_{a\in E}dada^* \in \overline{\mathrm{DR}}_R^2A$ on $A$ given in~\cite[Proposition 8.1.1]{CBEG}, with moment map $\mu$.
\end{itemize} 
We claim that the first structure implies (twice) the second one under \cref{rcybisym}.
Indeed: the homotopy between $0$ and $\mu(1\otimes x)$ is given by $\sum_{a\in E}(1\otimes a\otimes a^*-1\otimes a^*\otimes a)$ which  corresponds to $2\sum_{a\in E}dada^*$.
\end{example}

We next investigate the relationship between fusion of bisymplectic structures and relate them to the compositions of Calabi--Yau cospans. Consider a dg-category $\cC$ with objects set $I$, along with a relative $1$-Calabi--Yau structure $\mu: \hat{\mathcal R}\to \cC$ that induces natural absolute Calabi--Yau structures on each $k[x_i]$.
Set $\hat{\mathcal R}_{\ge3}= \coprod_{i\ge3} k[x_i]$.
We can consider the composition of cospans
\[
\xymatrix{&& \mathcal C^f   &&\\
& k\langle x_1, x_2 \rangle \amalg\hat{\mathcal R}_{\ge3}  \ar[ur]&&~~~~~\cC~~~~~\ar[ul]&\\
k[z] \amalg\hat{\mathcal R}_{\ge3}   \ar[ur]&&  \hat{\mathcal R}   \ar[ul]\ar[ur]&&~~~~\varnothing~~~~\ar[lu]}\]
defining $\mathcal C^f$, where $z$ is mapped to $x_1+x_2$. 
This yields a relative Calabi--Yau structure on \begin{equation}\label{fusadd}
k[z] \amalg\hat{\mathcal R}_{\ge3}  \to \mathcal C^f.\end{equation}

\begin{theorem}\label{fusthmadd}
Assume that $A_\cC$ is $1$-smooth. 
Let $(A_{\cC}, \omega)$  be the bisymplectic structure induced by the relative 1-Calabi--Yau structure $\mu$ thanks to~\cref{rcybisym}. 
Then the fusion bisymplectic structure $(A_{\cC}^f, \omega^f)$  obtained from fusing the two objects $1$ and $2$ is induced by the relative $1$-Calabi--Yau structure~\eqref{fusadd}. 
\end{theorem}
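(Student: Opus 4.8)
The plan is to apply \cref{rcybisym} to the composed cospan \eqref{fusadd} and to check that the bisymplectic structure it produces on $A_{\cC^f}\simeq(A_\cC)^f$ coincides, both in its underlying $2$-form and its moment map, with the fusion structure $(A_\cC^f,\omega^f)$. Since fusion preserves $1$-smoothness, $A^f$ is $1$-smooth over $R^f$, so \cref{rcybisym} genuinely applies to \eqref{fusadd}; closedness and non-degeneracy of the resulting form then come for free, and the only thing left to verify is that the induced $2$-form is $\omega^f$. I would begin with the (easier) moment-map bookkeeping: the left leg of \eqref{fusadd} sends $z\mapsto x_1+x_2$ and is the identity on $\hat{\mathcal R}_{\ge3}$, while under the pushout identification of \cref{prop: fusion} the generator $x_1$ goes to $\mu_1$ and $x_2$ goes to $e_{12}\mu_2e_{21}$. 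Hence $z\mapsto\mu_1+e_{12}\mu_2e_{21}=\mu_1^{f\!\!f}$ and $x_i\mapsto\mu_i$ for $i\ge3$, so the moment map produced by \cref{rcybisym} is exactly $\mu^{f\!\!f}$, matching the fusion moment map.

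The heart of the argument is the identification of the $2$-form. By construction the relative Calabi--Yau structure \eqref{fusadd} is the composite, in the sense of \cite[Theorem 6.2]{BD1}, of the Calabi--Yau cospan $\mu\colon\hat{\mathcal R}\to\cC$ with the additive pair-of-pants cospan \eqref{equation-additive pants} (together with the identity cospan on $\hat{\mathcal R}_{\ge3}$). On Hochschild chains this composition glues the two homotopies along their common boundary over $\hat{\mathcal R}$; because the homotopy carried by the additive pair-of-pants \eqref{equation-additive pants} is the \emph{zero} homotopy, the composite homotopy in $C_*(\cC^f)$ producing the degree-$2$ part is simply the image $\nu(h)$ of the degree-$2$ Hochschild chain $h\in C_*(\cC)$ that yields $\omega_1=\omega$ (with $b(h)=\sum_i1\otimes\mu_i$), where $\nu\colon C_*(\cC)\to C_*(\cC^f)$ is the map induced by the fusion functor. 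Passing to noncommutative forms, \cref{lemma: HH and fusion} supplies a commuting square relating $\nu$ with the trace map $(-)^f$ on forms, so the $2$-form associated by \cref{rcybisym} to $\nu(h)$ equals $(\omega_1)^f=\omega^f$. Combined with the moment-map computation, this shows that \eqref{fusadd} induces precisely $(A_\cC^f,\omega^f)$.

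The main obstacle is the middle step: making the claim that the composite homotopy is $\nu(h)$ precise at the level of negative cyclic chains. This requires unwinding the Brav--Dyckerhoff composition \cite[Theorem 6.2]{BD1} far enough to see that (a) the additive pair-of-pants \eqref{equation-additive pants} contributes the zero homotopy, as established in its defining lemma, so that no spurious terms are created during the gluing, and (b) the $\hat{\mathcal R}$-boundary of $h$, namely $\sum_i1\otimes\mu_i$, is exactly the image under $\mu$ of the boundary $\sum_i1\otimes x_i$ carried by the pair-of-pants leg, so that the two homotopies do glue to a genuine homotopy on the composite cospan. Since only the $u^0$-component matters for the bisymplectic $2$-form and \cref{lemma: HH and fusion} lives at the Hochschild level, once this gluing is checked the remaining identification $(\omega_1)^f=\omega^f$ is immediate, and no re-verification of closedness or non-degeneracy of $\omega^f$ is needed.
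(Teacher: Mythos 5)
Your proposal is correct and follows essentially the same route as the paper's proof: the moment-map bookkeeping via the pushout identification of \cref{prop: fusion}, and the identification of the $2$-form by observing that the additive pair-of-pants contributes a trivial homotopy, so the composite homotopy is $\nu(\omega)$, which equals $\omega^f$ by \cref{lemma: HH and fusion}. Your additional remarks on verifying the gluing of homotopies at the cyclic level and on fusion preserving $1$-smoothness are sensible elaborations of steps the paper leaves implicit.
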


\begin{proof}
Set $A=A_{\cC}$. We know thanks to \cref{prop: fusion} that $A^f \simeq A_{ \mathcal C^f}$. As the bisymplectic structure is compatible with the relative 1-Calabi--Yau structure, we have that the image of $z$ under this isomorphism is $\mu(x_1)^f+\mu(x_2)^f$. Hence the moment map of the fusion bisymplectic structure is induced from the Calabi--Yau cospan. 
Let $\omega \in \Omega_R^2( A)$ denote the homotopy $ \mu(1\otimes (\sum_{i\in I} x_i)) \sim 0$ of the Calabi--Yau structure which induces by assumption  the bisymplectic structure on $A$. 
Since the homotopy between the 1-forms in the cospan \[
 k[z] \amalg\hat{\mathcal R}_{\ge3}   \longrightarrow   k\langle x_1, x_2 \rangle \amalg\hat{\mathcal R}_{\ge3} \longleftarrow \hat{\mathcal R} \]
  is trivial, the zero-homotopy of  the composition of Calabi--Yau cospans is given by the image of $\omega $ under 
 the map $\nu$ from \cref{lemma: HH and fusion}. But it is proven there that this image is $\omega^f$, which is precisely what we want.
\end{proof} 

To summarize, we have proven that the following diagram commutes, with 
$R^f\simeq \oplus_{i\in I\setminus\{2\}}ke_i$ and $\hat{\mathcal R}^f\simeq\amalg_{i\in I\setminus\{2\}}k[x_i]$.\[
\xymatrix{
{\left\{\begin{array}{c}1$-Calabi--Yau functors$ \\ 
\hat{\mathcal R}\to\cC$, under $\mathcal{R}, \\
$with $A_\cC$ $1$-smooth$\end{array}\right\}}
\ar[rrr]^-{\text{Theorem }\ref{rcybisym}}\ar[dd]_-{\substack{\text{composition} \\ 
\text{with pair-of-pants}}}&&&{\left\{\begin{array}{c}$bisymplectic structures$\\$on $1$-smooth $R$-algebras$\end{array}\right\}}\ar[dd]^-{\text{fusion}}\\&&&\\
{\left\{\begin{array}{c}1$-Calabi--Yau functors$ \\ \hat{\mathcal R}^f\to\cC^f$, under $\mathcal{R}^f,\\
$with $A_{\cC^f}$ $1$-smooth$\end{array}\right\}}\ar[rrr]^-{\text{Theorem }\ref{rcybisym}}&&&{\left\{\begin{array}{c}$bisymplectic structures$\\$on $1$-smooth $R^f$-algebras$\end{array}\right\}}
}\]


\section{Calabi--Yau versus quasi-bisymplectic structures}\label{secvyqb}

We prove in this section that  relative Calabi--Yau structures on $k[x^{\pm1}] \to \cC,$ $\cC$ a $k$-linear dg-category, induces this time quasi-bisymplectic ones on $A_\cC$, in the sense of~\cite{VdB2}. We prove again that  fusion of quasi-bisymplectic structures on  $A_\cC$ is induced by the composition of Calabi--Yau cospans with the multiplicative pair-of-pants.

\subsection{Quasi-bisymplectic structures}

Consider an $R$-algebra $A$.

\begin{definition}[\cite{VdB2}] A \emph{quasi-bisymplectic algebra} is a triple
$(A,\omega,\Phi)$ where $\omega\in \cat{DR}_R^2 A$ and $\Phi\in A^\ast$ satisfying the following conditions
\begin{enumerate}
\item[($\mathbb{B}$1)] $d\omega=\frac{1}{6} (\Phi^{-1} d\Phi)^3\quad \mod [-,-]$.
\item[($\mathbb{B}$2)] $\imath_{E}\omega=\frac{1}{2} (\Phi^{-1}d\Phi+d\Phi \Phi^{-1})$
\item[($\mathbb{B}$3)] The map
\[
D_{A/R}\oplus Ad\Phi A\rightarrow \Omega_A :(\delta,\eta)\mapsto
\imath(\omega)(\delta)+\eta
\]
is surjective.
\end{enumerate}
\end{definition}

Recall from~\cite[Theorem 7.1]{VdB2} the $A\otimes_RA$-linear map $T:\Omega_{A/R} \stackrel{e} \rightarrow A E^* A \stackrel{T^0}  \rightarrow A d\Phi A  \stackrel{c} \rightarrow \Omega_{A/R}$, where $c$ denotes the canonical embedding, $e$ the adjoint of $c$ and $T^0$ is uniquely determined by $T^0(E^*)=\Phi^{-1} d\Phi-d\Phi \Phi^{-1}$.
\begin{definition}
We say that a triple $(\omega,P, \Phi)\in\Omega_R^2(A)\times D_R^2(A)\times A^*$ is \emph{compatible}  if $\iota(\omega) \iota(P)=1-\frac{1}{4}T$.
\end{definition}

What is proved by~\cite[Theorem 7.1]{VdB2} is that each quasi-bisymplectic structure of $\cat{DR}_R^2 (A)$ corresponds to a unique non-degenerate double quasi-Poisson bracket in $(D_RA/[D_RA,D_RA])_2$. We will not recall the definition of the latter here.

\begin{lemma}
Let $(\omega, P,\Phi)$ be a compatible triple on $A$ such that $(\omega,\Phi)$ is quasi-bisymplectic. Then $({\omega^+},{\Phi^+})$ is quasi-bisymplectic on $A^+$ and  $({\omega^+},  P^+,{\Phi^+})$ is also compatible.  
\end{lemma}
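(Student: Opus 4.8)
The plan is to reduce everything about $A^+$ to the corresponding facts about $A$ via the flat base change $A^+=A\otimes_R R^+$, using that $R^+=M_2(k)\oplus R_{\ge3}$ is a finite separable extension of $R$. The key observation is that $A^+$ is obtained from $A$ by an extension of semisimple rings, so all the relevant structures ($\Omega$, $\DR$, $D$, the contraction $\iota$, the operator $T$) are compatible with $-\otimes_R R^+$, and the elements $\omega^+,P^+,\Phi^+$ are precisely the images of $\omega,P,\Phi$ under the induced maps $(-)^+$. Crucially, $\Phi^+$ is invertible in $A^+$ (with inverse $(\Phi^{-1})^+$), since $A\to A^+$ is a ring map sending $1$ to $1$.

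First I would verify the quasi-bisymplectic conditions $(\mathbb{B}1)$--$(\mathbb{B}3)$ for $(\omega^+,\Phi^+)$. For $(\mathbb{B}1)$ and $(\mathbb{B}2)$, the point is that the map $(-)^+:\DR_R A\to\DR_{R^+}A^+$ (and on $\Omega_R A$) is a morphism of mixed complexes: it commutes with the de Rham differential $d$ and with $\iota_E$. Since $\Phi^{-1}d\Phi$ and $d\Phi\,\Phi^{-1}$ are built from $\Phi$ and $d\Phi$ by the algebra structure and $d$, applying $(-)^+$ to the identities $(\mathbb{B}1)$--$(\mathbb{B}2)$ for $(\omega,\Phi)$ gives exactly the identities for $(\omega^+,\Phi^+)$, provided one checks that $(\Phi^{-1})^+=(\Phi^+)^{-1}$ and that $(-)^+$ is multiplicative and commutes with the gauge element $E\mapsto E^+$. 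The latter compatibilities follow from the fact that $A^+$ is generated over $A$ by the matrix units and that $E^+$ is the gauge element of $A^+$ over $R^+$, paralleling the computation $F_i=E_i^f+E_2^f$ used earlier for fusion.

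Next I would treat $(\mathbb{B}3)$ and compatibility together, since both concern the maps $\iota(\omega)$, $\iota(P)$, and $T$ after base change. The core fact is that $\iota(\omega^+)=\iota(\omega)\otimes_R R^+$ as an $A^+$-bimodule map $D_{A^+/R^+}\to\Omega_{A^+/R^+}$, using the isomorphisms $D_{A^+/R^+}\simeq (A^+)^e\otimes_{A^e}D_{A/R}$ and $\Omega_{A^+/R^+}\simeq (A^+)^e\otimes_{A^e}\Omega_{A/R}$ already established in the proof that fusion preserves $1$-smoothness (via \cref{lemma: trace} and the flatness of $R^+$ over $R$). Because base change along the flat, semisimple extension $R\to R^+$ is exact and preserves surjectivity, the surjectivity in $(\mathbb{B}3)$ for $A$ yields surjectivity for $A^+$ (noting $A^+d\Phi^+A^+$ is the image of $Ad\Phi A$ under base change). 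For compatibility, I would check that $T$ base-changes correctly, i.e. $T^+=T\otimes_R R^+$; this reduces to verifying $(T^0)^+(( E^+)^*)=(\Phi^+)^{-1}d\Phi^+-d\Phi^+(\Phi^+)^{-1}$, which again follows from applying $(-)^+$ to the defining equation $T^0(E^*)=\Phi^{-1}d\Phi-d\Phi\,\Phi^{-1}$. Granting $T^+=T\otimes_R R^+$, the compatibility identity $\iota(\omega)\iota(P)=1-\tfrac14 T$ base-changes to $\iota(\omega^+)\iota(P^+)=1-\tfrac14 T^+$, as desired.

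The main obstacle I anticipate is the careful bookkeeping of the operator $T$ under base change, specifically checking that the three-step factorization $\Omega_{A/R}\xrightarrow{e}AE^*A\xrightarrow{T^0}Ad\Phi A\xrightarrow{c}\Omega_{A/R}$ from~\cite[Theorem 7.1]{VdB2} is natural with respect to $-\otimes_R R^+$. The embedding $c$ and its adjoint $e$ are formal and clearly commute with flat base change, so the real content is the middle map $T^0$: one must confirm that the image of $E^*$ in $A^+$ is $(E^+)^*$ up to the identification $A^+E^*A^+\simeq A(E^+)^*A$, and that $\Phi^{-1}d\Phi-d\Phi\,\Phi^{-1}$ transforms to $(\Phi^+)^{-1}d\Phi^+-d\Phi^+(\Phi^+)^{-1}$. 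This is essentially the observation that the dual gauge element and the Maurer--Cartan-type expression in $\Phi$ are both preserved by $(-)^+$, which I would isolate as a short sublemma before assembling the final identities.
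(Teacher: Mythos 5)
Your proposal is correct and follows essentially the same route as the paper: the paper's proof also rests on the exactness of $-\otimes_R R^+$ (since $R$ is semisimple), the identifications $\Omega_{A^+/R^+}\simeq \Omega_{A/R}\otimes_R R^+$ and $D_{A^+/R^+}\simeq D_{A/R}\otimes_R R^+$, and functoriality of base change to transport $(\mathbb{B}1)$--$(\mathbb{B}3)$ and the identity $\iota(\omega)\iota(P)=1-\tfrac14 T$ to $A^+$. You simply spell out in more detail (e.g.\ the behaviour of $T^0$ and of the gauge element under $(-)^+$) what the paper declares to ``follow immediately.''
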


\begin{proof}
The compatibility condition is given by $\iota(\omega) \iota(P)=1-\frac{1}{4}T$. 
Since $R$ is semi-simple,  $-\otimes_R {R^+}$ is exact. Recall also that $\Omega_{{A^+}/{R^+}} \simeq \Omega_{A/R} \otimes_R {R^+}$ and $D_{{A^+}/{R^+}} \simeq  D_{A/R} \otimes_R {R^+}$. From this it follows immediately that $({\omega^+},  {\Phi^+})$ is a quasi-bisymplectic structure. 
Now by functoriality of the extension of scalar functor $-\otimes_R {R^+}$, we obtain that 
$\iota( \omega^+) \iota(  P^+)=1-\frac{1}{4}  T^+$. 
\end{proof} 

Assume that $R=\oplus_{i\in I}ke_i$ is based on pairwise orthogonal idempotents. Let $(\omega, P,\Phi)$ be a compatible triple on $A$ such that $(\omega,\Phi)$ is quasi-bisymplectic and assume that $\Phi=(\Phi_i)_{i\in I}\in\oplus_{i\in I}e_i A^\ast e_i$.
 Set $\Phi_1^{f\!\!f}=\Phi_1^f\Phi_2^f$ and $\Phi_i^{f\!\!f}=\Phi_i^f=\Phi_i$ if $i>2$.
The following rather computational result is the noncommutative analog of~\cite[Proposition 10.7]{AKSM}.
\begin{proposition}\label{fusionqH}
Set $\omega_\mathrm{cor}=\frac{1}{2}(\Phi_1^f)^{-1}d\Phi_1^fd\Phi_2^f(\Phi_2^f)^{-1}$. Then $\omega^{f\!\!f}:=\omega^f-\omega_\mathrm{cor}$ is compatible with $P^{f\!\!f}:=P^f+\frac{1}{2}E_1^fE_2^f$.
\end{proposition}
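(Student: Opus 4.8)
The plan is to bootstrap from the compatibility already available on $A^+$ and then account for the fact that fusion \emph{multiplies} the two moment-map components at the glued object rather than simply reducing them. By the preceding lemma we have $\iota(\omega^+)\iota(P^+)=1-\frac14 T^+$ on $\Omega_{{A^+}/{R^+}}$. Applying the trace maps and using the identifications $\Omega_{A^f/R^f}\simeq\epsilon\,\Omega_{{A^+}/{R^+}}\,\epsilon$ and $D_{A^f/R^f}\simeq\epsilon\,D_{{A^+}/{R^+}}\,\epsilon$, together with the commutation $\iota_{\mathrm{Tr}(\delta)}(\mathrm{Tr}(\omega))=\mathrm{Tr}(\iota_\delta(\omega))$ from \cref{lemma: trace functoriality}, this descends to $\iota(\omega^f)\iota(P^f)=1-\frac14\tilde T$, where $\tilde T$ denotes the operator $T$ built from the \emph{pair} of components $\Phi_1^f,\Phi_2^f$ (and the unchanged $\Phi_i$, $i>2$) sitting at the fused object. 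The genuine fused operator $T^{f\!\!f}$ is instead built from the single component $\Phi_1^{f\!\!f}=\Phi_1^f\Phi_2^f$, so the whole content of the proposition is encoded in the discrepancy $T^{f\!\!f}-\tilde T$.

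First I would expand the target bilinearly, using that $\iota(-)$ is linear in its argument:
\[
\iota(\omega^{f\!\!f})\iota(P^{f\!\!f})
=\iota(\omega^f)\iota(P^f)
+\tfrac12\,\iota(\omega^f)\iota(E_1^fE_2^f)
-\iota(\omega_{\mathrm{cor}})\iota(P^f)
-\tfrac12\,\iota(\omega_{\mathrm{cor}})\iota(E_1^fE_2^f).
\]
The first summand equals $1-\frac14\tilde T$ by the previous paragraph, so it remains to show that the last three summands assemble to exactly $-\frac14\bigl(T^{f\!\!f}-\tilde T\bigr)$.

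The core computation is to make both sides of this last identity explicit. Since the fused gauge element is $F_1=E_1^f+E_2^f$ (from \cite[Lemma 5.3.3]{VdB}) and the fused moment map is the product $\Phi_1^f\Phi_2^f$, the Maurer--Cartan form splits as
\[
(\Phi_1^{f\!\!f})^{-1}d\Phi_1^{f\!\!f}
=(\Phi_2^f)^{-1}(\Phi_1^f)^{-1}d\Phi_1^f\,\Phi_2^f+(\Phi_2^f)^{-1}d\Phi_2^f,
\]
with a symmetric expansion for $d\Phi_1^{f\!\!f}(\Phi_1^{f\!\!f})^{-1}$; feeding these into $T^0(F_1^*)=(\Phi_1^{f\!\!f})^{-1}d\Phi_1^{f\!\!f}-d\Phi_1^{f\!\!f}(\Phi_1^{f\!\!f})^{-1}$ exhibits $T^{f\!\!f}-\tilde T$ as a sum of cross terms mixing $d\Phi_1^f$ and $d\Phi_2^f$. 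In parallel, condition ($\mathbb{B}$2), applied component-wise to the reduced data, lets me evaluate $\iota_{E_i^f}(\omega^f)$ as $\tfrac12\bigl((\Phi_i^f)^{-1}d\Phi_i^f+d\Phi_i^f(\Phi_i^f)^{-1}\bigr)$, so that the term $\tfrac12\iota(\omega^f)\iota(E_1^fE_2^f)$ reproduces precisely these Maurer--Cartan cross terms; meanwhile the two terms involving $\omega_{\mathrm{cor}}=\frac12(\Phi_1^f)^{-1}d\Phi_1^f\,d\Phi_2^f(\Phi_2^f)^{-1}$ supply the remaining mixed $d\Phi_1^f\,d\Phi_2^f$ contributions. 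Matching these term by term yields the claim.

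The main obstacle is precisely this final bookkeeping: tracking the inner/outer bimodule structures against which $\iota$ contracts, the signs introduced by the flip ${}^\circ$ and the degree shifts, the $\tfrac14$ versus $\tfrac12$ coefficients, and the dualizations $E_i\mapsto E_i^*$, $F_1\mapsto F_1^*$ entering $T^0$. This is the noncommutative counterpart of the already intricate computation behind \cite[Proposition 10.7]{AKSM}, and I expect it to be entirely mechanical once the cross-term dictionary above is in place, but genuinely lengthy. A useful consistency check along the way is to verify that $(\omega^{f\!\!f},\Phi^{f\!\!f})$ satisfies ($\mathbb{B}$2) relative to $F_1$, since that is exactly the identity governing the Maurer--Cartan cross terms that must cancel.
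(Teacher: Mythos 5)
Your proposal follows essentially the same route as the paper's proof: the bilinear expansion of $\iota(\omega^{f\!\!f})\iota(P^{f\!\!f})$ into the four summands is exactly the paper's decomposition \eqref{compafusion}, and your identification of $\iota(\omega^f)\iota(P^f)$ with $1-\frac14\tilde T$ (the operator built from the separate components $\Phi_1^f,\Phi_2^f$) via \cref{lemma: trace functoriality} is precisely how the paper handles the first term, after which everything reduces, as you say, to checking that the three correction terms assemble to $-\frac14(T^{f\!\!f}-\tilde T)$. The lengthy bookkeeping you defer is indeed the bulk of the paper's argument; the one input you do not name explicitly is the formula $H_a(\Phi)=-\frac12(\Phi E+E\Phi)(a)^\circ$ for the Hamiltonian vector field $H_a=\iota(P)(da)$, without which the term $\iota(\omega_{\mathrm{cor}})\iota(P^f)$ cannot actually be evaluated.
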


\begin{proof}
We need to prove that $\iota(\omega^{f\!\!f})\iota(P^{f\!\!f})=1-\frac{1}{4}T^{f\!\!f}$ which is equivalent to\begin{equation}\label{compafusion}
\underbrace{\iota(\omega^f)\iota(P^f)}_{\mytag{(I)}{termI}}-\frac{1}{2}\underbrace{\iota(\omega_\mathrm{cor})\iota(E_1^fE_2^f)}_{\mytag{(II)}{termII}}-\underbrace{\iota(\omega_\mathrm{cor})\iota(P^f)}_{\mytag{(III)}{termIII}}+\frac{1}{2}\underbrace{\iota(\omega^f)\iota(E_1^fE_2^f)}_{\mytag{(IV)}{termIV}}=1-\frac{1}{4}\underbrace{T^{f\!\!f}}_{\mytag{(V)}{termV}}.\end{equation}
Note that  $ A^+ \to A^f$, $a \mapsto \mathrm{Tr}(a)$ is surjective. Hence it is sufficient to show compatibility on all images of  $da\in  \Omega_{{A^+}/{R^+}}$. We will systematically use the notation $(-)^f=\mathrm{Tr}(-)$ in the rest of this proof.

We have $\Phi_1^{f\!\!f}=\Phi_1^f\Phi_2^f=\Phi_1^+e_{12}\Phi_2^+e_{21}$ and $\Phi_i^{f\!\!f}=\Phi_i^f=\Phi_i$ if $i>2$. We abusively note $\Phi_i=\Phi_i^+$ as they don't involve $e_{ij}$'s, so that $\Phi_i^f=\Phi_i$ when $i\neq 2$, $\Phi_2^f=e_{12}\Phi_2e_{21}$ and we set $\Psi=\Phi_1^{f\!\!f}$.
Then for any $a\in A^+$ \begin{align*}
\text{\ref{termV}}(da^f)&=T^{f\!\!f}(da^f)\\
&=[a^f,(\Phi^{f\!\!f})^{-1}d\Phi^{f\!\!f}-d\Phi^{f\!\!f}(\Phi^{f\!\!f})^{-1}]\\
&=[a^f,\Psi^{-1}d\Phi_1\Phi_2^f+(\Phi_2^f)^{-1}d\Phi_2^f-d\Phi_1\Phi_1^{-1}-\Phi_1d\Phi_2^f\Psi^{-1}]\\
&\qquad\qquad +\sum_{i>2}[a^f,\Phi_i^{-1}d\Phi_i-d\Phi_i\Phi_i^{-1}]\\
&=[a^f,\Psi^{-1}d\Phi_1\Phi_2^f+(\Phi_2^f)^{-1}d\Phi_2^f-d\Phi_1\Phi_1^{-1}-\Phi_1d\Phi_2^f\Psi^{-1}]\\
&\qquad\qquad +\sum_{i>2}\epsilon[a,\Phi_i^{-1}d\Phi_i-d\Phi_i\Phi_i^{-1}]\epsilon
\end{align*}
whereas, thanks to \cref{lemma: trace functoriality},\allowdisplaybreaks \begin{align*}
\text{\ref{termI}}(da^f)&=\iota(\omega^f)\iota(P^f)(da^f)\\
&=\iota(\omega^f) \big(\iota(P)(da)\big)^{\!f} \\
&= \big( \iota(\omega) \iota( P) (da)\big)^{\!f}\\
&= \big(a-\frac{1}{4}T(da)\big)^{\!f} \\
&=a^f-\dfrac{1}{4}\epsilon[a,\Phi_1^{-1}d\Phi_1-d\Phi_1\Phi_1^{-1}]\epsilon-\dfrac{1}{4}e_{12}[a,\Phi_2^{-1}d\Phi_2-d\Phi_2\Phi_2^{-1}]e_{21}\\
&\qquad\qquad-\frac{1}{4}\sum_{i>2}\epsilon[a,\Phi_i^{-1}d\Phi_i-d\Phi_i\Phi_i^{-1}]\epsilon\\
&=a^f-\dfrac{1}{4}[\epsilon a\epsilon,\Phi_1^{-1}d\Phi_1-d\Phi_1\Phi_1^{-1}]\epsilon-\dfrac{1}{4}[e_{12}ae_{21},(\Phi_2^f)^{-1}d\Phi_2^f-d\Phi_2^f(\Phi_2^f)^{-1}]\\
&\qquad\qquad-\frac{1}{4}\sum_{i>2}\epsilon[a,\Phi_i^{-1}d\Phi_i-d\Phi_i\Phi_i^{-1}]\epsilon.\end{align*}
Recall that for every $\delta\in D_{A^f}$\begin{align*}
2\iota(\omega_\mathrm{cor})(\delta)&={}^\circ i_{\delta}(\Phi_1^{-1}d\Phi_1d\Phi^f_2(\Phi_2^f)^{-1})\\
&={}^\circ(\Phi_1^{-1}\delta\Phi_1d\Phi^f_2(\Phi_2^f)^{-1}-\Phi_1^{-1}d\Phi_1\delta\Phi^f_2(\Phi_2^f)^{-1})\\
&=\delta(\Phi_1)''d\Phi^f_2\Psi^{-1}\delta(\Phi_1)'-\delta(\Phi_2^f)''\Psi^{-1}d\Phi_1\delta(\Phi^f_2)'.
\end{align*}
and that for every $a\in A$ we have $\iota(P)(da)=H_a$, the Hamiltonian vector field which satisfies \[
H_a(\Phi)=-\dfrac{1}{2}(\Phi E+E\Phi)(a)^\circ.\]
It implies (recall that the bimodule structure on double derivations is induced by the inner one on $A\otimes_RA$)\begin{align*}
2H^f_a(\Phi_1^f)&=2(\epsilon H_a\epsilon+e_{12}H_ae_{21})(\Phi_1)\\
&=-(\epsilon\Phi_1E_1\epsilon+\epsilon E_1\Phi_1\epsilon+e_{12}\Phi_1E_1e_{21}+e_{12} E_1\Phi_1e_{21})(a)^\circ\\
&=-(\Phi_1E_1\epsilon+\epsilon E_1\Phi_1)(a)^\circ\\
&=-(a\epsilon\otimes \Phi_1-\epsilon\otimes \Phi_1a+a\Phi_1\otimes \epsilon-\Phi_1\otimes \epsilon a)^\circ\\
&=-\Phi_1\otimes a\epsilon+\Phi_1a\otimes \epsilon-\epsilon\otimes a\Phi_1+\epsilon a\otimes \Phi_1\\
&=-\Phi_1\otimes \epsilon a \epsilon+\Phi_1\epsilon a \epsilon\otimes e_1-e_1\otimes \epsilon a \epsilon\Phi_1+\epsilon a \epsilon\otimes \Phi_1
\end{align*}
and\begin{align*}
2H^f_a(\Phi_2^f)&=2e_{12}(\epsilon H_a\epsilon+e_{12}H_ae_{21})(\Phi_2)e_{21}\\
&=-\big(e_{12}(\epsilon\Phi_2E_2\epsilon+\epsilon E_2\Phi_2\epsilon+e_{12}\Phi_2E_2e_{21}+e_{12} E_2\Phi_2e_{21})(a)e_{21}\big)^{\!\circ}\\
&=-\big(e_{12}(e_{12}\Phi_2E_2e_{21}+e_{12} E_2\Phi_2e_{21})(a)e_{21}\big)^{\!\circ}\\
&=-\big(e_{12}ae_{21}\otimes e_{12}\Phi_2e_{21}-e_{12}e_{21}\otimes e_{12} \Phi_2ae_{21}\\
&\qquad\qquad+e_{12}a\Phi_2e_{21}\otimes e_{12}e_2e_{21}-e_{12}\Phi_2e_{21}\otimes e_{12}e_2ae_{21}\big)^{\!\circ}\\
&=-\big(e_{12}ae_{21}\otimes \Phi_2^f-e_{1}\otimes\Phi_2^fe_{12}ae_{21}+e_{12}ae_{21}\Phi_2^f\otimes e_{1}-\Phi_2^f\otimes e_{12}ae_{21}\big)^{\!\circ}\\
&=-\Phi_2^f\otimes e_{12}ae_{21}+\Phi_2^fe_{12}ae_{21}\otimes e_{1}-e_1\otimes e_{12}ae_{21}\Phi_2^f+e_{12}ae_{21}\otimes\Phi_2^f.
\end{align*}
We thus obtain \begin{align*}		
4\text{\ref{termIII}}(da^f)&=4\iota(\omega_\mathrm{cor})\iota(P^f)(da^f)\\
&=4\iota(\omega_\mathrm{cor})(H_a^f)\\
&=2H^f_a(\Phi_1)''d\Phi_2^f\Psi^{-1}H^f_a(\Phi_1)'-2H^f_a(\Phi_2^f)''\Psi^{-1}d\Phi_1H^f_a(\Phi_2^f)'\\
&=-e_1ad\Phi_2^f\Psi^{-1}\Phi_1+d\Phi_2^f\Psi^{-1}\Phi_1ae_1-e_1a\Phi_1d\Phi_2^f\Psi^{-1}+\Phi_1d\Phi_2^f\Psi^{-1}ae_1\\
&+e_{12}ae_{21}\Psi^{-1}d\Phi_1\Phi_2^f-\Psi^{-1}d\Phi_1\Phi_2^fe_{12}ae_{21}+e_{12}ae_{21}\Phi_2^f\Psi^{-1}d\Phi_1-\Phi_2^f\Psi^{-1}d\Phi_1e_{12}ae_{21}\\
&=-\epsilon a \epsilon d\Phi_2^f(\Phi_2^f)^{-1}+d\Phi_2^f(\Phi_2^f)^{-1}\epsilon a \epsilon-\epsilon a \epsilon\Phi_1d\Phi_2^f\Psi^{-1}+\Phi_1d\Phi_2^f\Psi^{-1}\epsilon a \epsilon\\
&+e_{12}ae_{21}\Psi^{-1}d\Phi_1\Phi_2^f-\Psi^{-1}d\Phi_1\Phi_2^fe_{12}ae_{21}+e_{12}ae_{21}\Phi_1^{-1}d\Phi_1-\Phi_1^{-1}d\Phi_1e_{12}ae_{21}\\
&=-[\epsilon a \epsilon,d\Phi_2^f(\Phi_2^f)^{-1}+\Phi_1d\Phi_2^f\Psi^{-1}]+[e_{12}ae_{21},\Psi^{-1}d\Phi_1\Phi_2^f+\Phi_1^{-1}d\Phi_1].
\end{align*}
Also\begin{align*}
2\iota(\omega_\mathrm{cor})\iota(E_1^fE_2^f)(da^f)&=2\iota(\omega_\mathrm{cor}){}^\circ(i_{da^f}(E_1^f)E_2^f-E_1^fi_{da^f}(E_2^f))\\
&=2\iota(\omega_\mathrm{cor}){}^\circ(E_1^f(a^f)E_2^f-E_1^fE_2^f(a^f))\\
&=2\iota(\omega_\mathrm{cor})(e_1E_2^f\epsilon ae_1-e_1a\epsilon E_2^fe_1-e_{1}E_1^fe_{12}ae_{21}+e_{12}ae_{21}E_1^fe_{1})\\
&=e_1\iota(2\omega_\mathrm{cor})(E_2^f)\epsilon ae_1-e_1a\epsilon\iota(2\omega_\mathrm{cor})(E_2^f)e_1\\
&\qquad-e_1{\iota(2\omega_\mathrm{cor})({E_1^f})}e_{12}ae_{21}+e_{12}ae_{21}\iota(2\omega_\mathrm{cor})(E_1^f)e_{1}.
\end{align*}
But\begin{align*}
E_1^f(a^f)&=\epsilon E_1^+(a)\epsilon+e_{12} E_1^+(a)e_{21}\\
&=\epsilon ae_1\otimes e_1\epsilon-\epsilon e_1\otimes e_1a\epsilon+e_{12}ae_1\otimes e_1e_{21}-e_{12}e_1\otimes e_1ae_{21}\\
&=\epsilon a\epsilon \otimes e_1- e_1\otimes \epsilon a\epsilon
\end{align*}
and\begin{align*}
E_2^f(a^f)&=\epsilon(e_{12} E_2^+e_{21})(a)\epsilon+e_{12}(e_{12} E_2^+e_{21})(a)e_{21}\\
&=\epsilon ae_{21}\otimes e_{12}\epsilon-\epsilon e_{21}\otimes e_{12}a\epsilon+e_{12}ae_{21}\otimes e_{12}e_{21}-e_{12}e_{21}\otimes e_{12}ae_{21}\\
&=e_{12}ae_{21}\otimes e_{1}-e_{1}\otimes e_{12}ae_{21}
\end{align*}
imply $E_1^f(\Phi_1)=E_1(\Phi_1)$, $E_1^f(\Phi_2^f)=0$, $E_2^f(\Phi_1)=0$, $E_2^f(\Phi_2^f)=E_1(\Phi_2^f)$ and \begin{align*}
\iota(2\omega_\mathrm{cor})(E_1^f)&=d\Phi^f_2(\Phi_2^f)^{-1}-\Phi_1d\Phi^f_2\Psi^{-1}\\
\iota(2\omega_\mathrm{cor})(E_2^f)&=-\Psi^{-1}d\Phi_1\Phi^f_2+\Phi_1^{-1}d\Phi_1.
\end{align*}
Hence,\begin{align*}
2\text{\ref{termII}}(da^f)&=2\iota(\omega_\mathrm{cor})\iota(E_1^fE_2^f)(da^f)\\
&=e_1(-\Psi^{-1}d\Phi_1\Phi^f_2+\Phi_1^{-1}d\Phi_1)\epsilon a\epsilon-\epsilon a\epsilon(-\Psi^{-1}d\Phi_1\Phi^f_2+\Phi_1^{-1}d\Phi_1)e_1\\
&-e_1(d\Phi^f_2(\Phi_2^f)^{-1}-\Phi_1d\Phi^f_2\Psi^{-1})e_{12}ae_{21}+e_{12}ae_{21}(d\Phi^f_2(\Phi_2^f)^{-1}-\Phi_1d\Phi^f_2\Psi^{-1})e_{1}\\
& =[e_{12} a e_{21}, d\Phi^f_2 (\Phi^{f}_2)^{-1}- \Phi_1d\Phi_2^f \Psi^{-1}] +[\epsilon a \epsilon, \Psi^{-1}d\Phi_1  \Phi^{f}_2-\Phi_1^{-1}d \Phi_1 ].
\end{align*}
Similarly, using $\iota(2\omega^f)(E_i^f)=(\Phi_i^{-1}d\Phi_i+d\Phi_i\Phi_i^{-1})^f$, one gets\begin{align*}
2\text{\ref{termIV}}(da^f)&=2\iota(\omega^f)\iota(E_1^fE_2^f)(da^f)\\
&=e_1\iota(2\omega^f)(E_2^f)\epsilon a\epsilon- \epsilon a\epsilon\iota(2\omega^f)(E_2^f)e_1\\
&\qquad-e_1{\iota(2\omega^f)({E_1^f})}e_{12}ae_{21}+e_{12}ae_{21}\iota(2\omega^f)(E_1^f)e_{1}\\
&=e_1(\Phi_2^{-1}d\Phi_2+d\Phi_2\Phi_2^{-1})^f\epsilon a \epsilon-\epsilon a\epsilon(\Phi_2^{-1}d\Phi_2+d\Phi_2\Phi_2^{-1})^fe_1\\
&\qquad-e_1(\Phi_1^{-1}d\Phi_1+d\Phi_1\Phi_1^{-1})^fe_{12}ae_{21}+e_{12}ae_{21}(\Phi_1^{-1}d\Phi_1+d\Phi_1\Phi_1^{-1})^fe_{1}\\
&=e_{12}(\Phi_2^{-1}d\Phi_2+d\Phi_2\Phi_2^{-1})e_{21} \epsilon a \epsilon-\epsilon a \epsilon e_{12}(\Phi_2^{-1}d\Phi_2+d\Phi_2\Phi_2^{-1})e_{21}\\
&\qquad-(\Phi_1^{-1}d\Phi_1+d\Phi_1\Phi_1^{-1})e_{12}ae_{21}+e_{12}ae_{21}(\Phi_1^{-1}d\Phi_1+d\Phi_1\Phi_1^{-1}) \\
&= [e_{12} a e_{21},  \Phi^{-1}_1 d \Phi_1+d\Phi_1 \Phi_1^{-1}] -[\epsilon a \epsilon , (\Phi_2^{f})^{-1} d\Phi_2^f+d\Phi_2^f (\Phi_2^{f})^{-1}].
\end{align*}
Putting everything together yields~\eqref{compafusion} as expected.
\end{proof}

\subsection{From Calabi--Yau structures to quasi-bisymplectic structures}

Let again $\mathcal C$ be a $k$-linear category with objects set $I=\{1,\dots,n\}$. Set $e_i=\mathrm{id_i}$, $R=\oplus_{i\in I}ke_i$ and $\mathcal T:= \coprod_{i\in I} k[x^{\pm1}_i]$.

\begin{theorem}\label{thmcyqbs}
Assume that we have a relative $1$-Calabi--Yau structure on a $k$-linear functor $\mu:\mathcal T\to \cC$ which induces the natural $1$-Calabi--Yau structure on each $k[x_i^{\pm1}]$. If $A=A_{\cC}$ is $1$-smooth, then it is quasi-bisymplectic with multiplicative moment map $\sum_{i=1}^n \mu(x_i)$. 
\end{theorem}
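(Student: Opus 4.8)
The plan is to follow closely the strategy of the additive case (\cref{rcybisym}), the essential new input being the explicit negative cyclic lift of the natural $1$-Calabi--Yau class on $k[x^{\pm1}]$ computed in \cref{1cypm}. First I would transport the relative Calabi--Yau class along $\mu$: the homotopy witnessing the compatibility of the cospan provides, via the Ginzburg--Schedler identification recalled in \cref{subsechhncform}, a family of noncommutative forms $\omega_k\in\bar\Omega_R^{2k}A$ satisfying
\[
(\iota_E-ud)\Big(\sum_{k\ge0}u^k\omega_{k+1}\Big)=\mu(\gamma_{\mathcal T}),
\]
where $\gamma_{\mathcal T}=\sum_{i\in I}\gamma^{(i)}$ and each $\gamma^{(i)}=\sum_{k\ge0}\frac{k!}{(2k+1)!}(-u)^k\gamma^{(i)}_{k+1}$ is the cyclic lift of the natural class on the $i$-th copy $k[x_i^{\pm1}]$ from \cref{1cypm}. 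I would then set $\Phi:=\sum_{i\in I}\mu(x_i)$, which is invertible with $\Phi^{-1}=\sum_i\mu(x_i)^{-1}$ by orthogonality of the idempotents, and propose $\omega:=\omega_1\bmod[-,-]\in\DR_R^2A$ as the candidate quasi-bisymplectic form. The decisive contrast with the additive setting is that here the cyclic lift is \emph{not} trivial (there $B$ killed $\gamma$), and it is precisely the higher $u$-terms that will produce the ``quasi'' correction in $(\mathbb{B}1)$.

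The conditions $(\mathbb{B}1)$ and $(\mathbb{B}2)$ should then drop out by comparing coefficients of $u$. The $u^0$-coefficient gives $\iota_E\omega_1=\tfrac12\sum_i(\mu(x_i)^{-1}d\mu(x_i)+d\mu(x_i)\mu(x_i)^{-1})$; since the idempotents are orthogonal, $\mu(x_i)^{-1}d\mu(x_j)$ vanishes for $i\ne j$, and this collapses to $\tfrac12(\Phi^{-1}d\Phi+d\Phi\,\Phi^{-1})$, which is exactly $(\mathbb{B}2)$. The $u^1$-coefficient reads $\iota_E\omega_2-d\omega_1=-\tfrac16\sum_i\mu(\gamma^{(i)}_2)$. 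Working in $\DR_R A$, the term $\iota_E\omega_2$ vanishes because $\iota_E$ takes values in commutators, while $\gamma^{(i)}_2=\tfrac12(\alpha^{(i)}_2+\beta^{(i)}_2)\equiv(x_i^{-1}dx_i)^3$ modulo commutators (the Karoubi operator moves $\beta_2$ to $\alpha_2$ up to a commutator). Hence $d\omega_1\equiv\tfrac16\sum_i(\mu(x_i)^{-1}d\mu(x_i))^3$, and orthogonality recombines this into $\tfrac16(\Phi^{-1}d\Phi)^3$, giving $(\mathbb{B}1)$.

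The remaining condition $(\mathbb{B}3)$ --- surjectivity of $(\delta,\eta)\mapsto\iota(\omega)(\delta)+\eta$ on $D_{A/R}\oplus A\,d\Phi\,A\to\Omega_{A/R}$ --- is where I expect the real work to lie, since it is genuinely weaker than the clean isomorphism $\iota(\omega_1)\colon D_{A/R}\xrightarrow{\sim}\Omega_{A/R}$ available in the additive case. Here I would again invoke the non-degeneracy of the relative Calabi--Yau structure, which makes a certain square cartesian, and then use the $1$-smoothness of $A$ to replace every object by a two-term resolution, exactly as in the proof of \cref{rcybisym}. The crucial difference is that the bimodule resolution of each $k[x_i^{\pm1}]$ carries differential given by multiplication by $\mu(x_i)\otimes1-1\otimes\mu(x_i)$, so the multiplicative moment map $\Phi$ enters the resolution directly and the sub-bimodule $A\,d\Phi\,A$ appears as the image of the corresponding comparison map. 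Tracking the induced map on $H_0$ of the resolved complexes, the Calabi--Yau non-degeneracy should translate precisely into surjectivity of the combined map --- rather than into invertibility of $\iota(\omega)$ alone --- with the $d\Phi$-direction absorbing exactly the cokernel that obstructs $\iota(\omega)$ from being an isomorphism. Assembling $(\mathbb{B}1)$, $(\mathbb{B}2)$ and $(\mathbb{B}3)$ then exhibits $(A,\omega,\Phi)$ as quasi-bisymplectic with multiplicative moment map $\Phi=\sum_{i}\mu(x_i)$.
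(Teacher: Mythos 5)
Your proposal is correct and follows essentially the same route as the paper: extract $(\mathbb{B}2)$ and $(\mathbb{B}1)$ from the $u^0$- and $u^1$-coefficients of the cyclic equation $(\iota_E-ud)\big(\sum_k u^k\omega_{k+1}\big)=\Phi(\gamma)$ (using that $\iota_E$ lands in commutators and $\gamma_2\equiv(x^{-1}dx)^3$ in $\DR$), and then derive $(\mathbb{B}3)$ from the Calabi--Yau non-degeneracy by passing to two-term resolutions via $1$-smoothness, obtaining a surjection $D_{A/R}\to\Omega_{A/R}/\langle d\Phi\rangle$.
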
 

\begin{proof}
Define $\Phi:k[x^{\pm1}]\to A$ by $\Phi(x)=\sum_{i=1}^n \mu(x_i)\in\oplus_{i\in I}e_i A^\ast e_i$. Since $\mu$ is $1$-Calabi--Yau, using the notation of \cref{1cypm}, we know that there exists $\omega_k\in\bar\Omega_R^{2k}A$ for all $k$ such that \[
(\iota_E-ud)\bigg(\sum_{k\ge0}u^k\omega_{k+1}\bigg)=\Phi(\gamma)\]
or equivalently\begin{align*}
\iota_E\omega_1&=\Phi(\gamma_1)=\dfrac{1}{2}(\Phi^{-1}d\Phi+d\Phi\Phi^{-1})&&(\mathbb{B}2)\\
\iota_E\omega_2-d\omega_1&=-\dfrac{1}{6}\Phi(\gamma_2)\Rightarrow d\omega_1=\dfrac{1}{6}(\Phi^{-1}d\Phi)^3\mod [-,-]&&(\mathbb B1)\\
\iota_E\omega_3-d\omega_2&=\dfrac{2!}{5!}\Phi(\gamma_3)\\
\vdots\\
\iota_E\omega_{k+1}-d\omega_{k}&=(-1)^k\dfrac{k!}{(2k+1)!}\Phi(\gamma_{k+1})&&k\ge1.
\end{align*}
For $(\mathbb{B}3)$, set $T=k[x^{\pm1}]$ and
write the relative $1$-pre-Calabi--Yau structure \[
A^\vee[1]\to T^\vee[1]\otimes_{T^e}A^e\overset{\gamma}{\simeq} T\otimes_{T^e}A^e\to A\]
 with short resolutions (thanks to our $1$-smoothness assumption) to get the homotopy commuting diagram\[
\xymatrix{A^e\ar[r]^-{\mathrm{id}}\ar[d]_-{E}&A^e\ar[rrr]^-{(\Phi^{-1}\otimes1+1\otimes \Phi^{-1})/2}\ar[d]&&&A^e\ar[r]^-{d\Phi}\ar[d]&\Omega_{A/R}\ar@{_(->}[d]\\
D_{A/R}\ar[r]_-{\mathrm{ev}_\Phi}&A^e\ar[rrr]_-{(\Phi^{-1}\otimes1+1\otimes \Phi^{-1})/2}&&&A^e\ar[r]_-{\mathrm{id}}&A^e}\]
where the homotopy $D_{A/R}\to\Omega_{A/R}$ gives $\iota_E\omega_1=(\Phi^{-1}d\Phi+d\Phi\Phi^{-1})/2$.

Now assume that our Calabi--Yau structure is non-degenerate, that is \[
A^\vee[1]\simeq\mathrm{hofib}\left( T^\vee[1]\otimes_{T^e}A^e\overset{\gamma}{\simeq} T\otimes_{T^e}A^e\to A\right).\]
In short resolutions, this yields a quasi-isomorphism (between vertical complexes)\[
\xymatrix{A^e\ar[rrr]^-{(\Phi^{-1}\otimes1+1\otimes \Phi^{-1})/2}\ar[d]_-{E}&&&A^e\ar[d]^-{d\Phi}\\
D_{A/R}\ar[rrr]_-{\iota_E\omega_1}&&&\Omega_{A/R}}\]
which in particular gives a surjection $D_{A/R} \to \Omega_{A/R}/\langle d\Phi\rangle$, that is $(\mathbb B_3)$.
\end{proof}

\subsection{Fusion}

Set $\mathcal T_{\ge3}=\amalg_{i\ge3}k[x_i^{\pm1}]$ and consider the following composition of $1$-Calabi--Yau cospans \begin{equation}\label{multfus}
\xymatrix{&&\cC^f&&\\
& k\langle x^{\pm1} ,y^{\pm1}\rangle\amalg\mathcal T_{\ge3}\ar[ur]&&~~~~~\mathcal C~~~~~\ar[ul]&\\
k[z^{\pm1}]\amalg\mathcal T_{\ge3} \ar[ur]&&\mathcal T\ar[ul]\ar[ur]&&~~~~\varnothing~~~~\ar[lu]}\end{equation}
where the leftmost one is induced by the pair-of-pants. 
We want to prove the following multiplicative analog of \cref{fusthmadd}.

\begin{theorem}\label{1cyqbs}
Consider a $1$-Calabi--Yau functor $\mathcal T\rightarrow\mathcal C$ inducing the natural $1$-Calabi--Yau structure on each $k[x_i^{\pm1}]$, and assume that $A_\mathcal C$ is $1$-smooth.
Then the quasi-bisymplectic structure on $\cC^f$ induced thanks to \cref{thmcyqbs} by the $1$-Calabi--Yau functor\[
k[z^{\pm1}]\amalg\mathcal T_{\ge3}\rightarrow\cC^f\]
is the  one obtained by fusion of $1$ and $2$ from the quasi-bisymplectic structure of $A_\cC$ induced by \cref{thmcyqbs}.
\end{theorem}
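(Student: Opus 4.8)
The plan is to mirror the strategy of the additive \cref{fusthmadd}, the essential new feature being that the multiplicative pair-of-pants of \cref{popex} carries a \emph{nontrivial} homotopy $\beta_1$, whose image under fusion will account precisely for the correction term $\omega_\mathrm{cor}$ appearing in \cref{fusionqH}. Throughout I set $A=A_\cC$ and identify $A_{\cC^f}\simeq A^f$ via \cref{prop: fusion}. First I would pin down the underlying algebra and moment map: applying \cref{thmcyqbs} to the composite functor $k[z^{\pm1}]\amalg\mathcal T_{\ge3}\to\cC^f$ (whose $1$-smoothness hypothesis on $A_{\cC^f}$ is supplied by the preservation of $1$-smoothness under fusion proved earlier) produces a quasi-bisymplectic structure $(\widetilde\omega,\widetilde\Phi)$ on $A^f$, and the image of $z\mapsto xy$ under the pair-of-pants and fusion is $\Phi_1^f\Phi_2^f=\Phi_1^{f\!\!f}$, with $\widetilde\Phi(x_i)=\Phi_i$ for $i\ge3$. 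Thus $\widetilde\Phi=\Phi^{f\!\!f}$ is exactly the fused multiplicative moment map of \cref{fusionqH}. Since a quasi-bisymplectic structure is determined by the pair $(\omega,\Phi)$ — only the $2$-form part $\omega_1$ of the negative cyclic class is retained — it remains to identify $\widetilde\omega$ with $\omega^{f\!\!f}=\omega^f-\omega_\mathrm{cor}$.

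Next I would decompose the glued homotopy. Composition of Calabi--Yau cospans (\cite[Theorem 6.2]{BD1}) glues the negative cyclic classes, so the apex class on $\cC^f$ witnessing $\Phi^{f\!\!f}(\gamma)\sim0$ is the sum of the pushforward along $\nu:C_{*}(\cC)\to C_{*}(\cC^f)$ of the $\cC$-side homotopy and of the pushforward along $k\langle x^{\pm1},y^{\pm1}\rangle\to\cC^f$ of the pair-of-pants homotopy. Taking the $u^0$ component, \cref{lemma: HH and fusion} identifies the first contribution with $(\omega_1)^f=\omega^f$, exactly as in the additive case. Hence $\widetilde\omega=\omega^f+\theta$, where $\theta\in\bar\Omega^2_{R^f}A^f$ is the $2$-form produced from $\beta_1$, and the whole question reduces to showing $\theta=-\omega_\mathrm{cor}$.

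The hard part, which is the multiplicative counterpart of the calculation behind \cref{fusionqH}, is to evaluate $\theta$. Concretely, I would transport $\beta_1=y^{-1}\otimes x^{-1}\otimes xy-y\otimes y^{-1}x^{-1}\otimes x$ (with $x\mapsto\Phi_1^f$ and $y\mapsto\Phi_2^f$) through the zig-zag \eqref{NCHKR} of \cref{subsechhncform}: apply $a_0\otimes a_1\otimes a_2\mapsto a_0\,da_1\,da_2$, then the harmonic projection, and compare in the quotient of $\bar\Omega^2_{R^f}A^f$ appearing in \eqref{NCHKR}. Using $d(\Phi^{-1})=-\Phi^{-1}d\Phi\,\Phi^{-1}$ together with the Leibniz rule for $d(\Phi_1^f\Phi_2^f)$, the resulting expression should collapse, modulo commutators and exact terms, to $-\tfrac12(\Phi_1^f)^{-1}d\Phi_1^f\,d\Phi_2^f(\Phi_2^f)^{-1}=-\omega_\mathrm{cor}$. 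This bookkeeping — tracking the harmonic projection and the precise cyclic representative $\gamma$ of \cref{1cypm} — is where the real effort lies; it is the ``de Rham side'' analogue of the term-by-term matching carried out in \eqref{compafusion}, and I expect it to be the main obstacle.

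Finally, combining the two contributions yields $\widetilde\omega=\omega^f-\omega_\mathrm{cor}=\omega^{f\!\!f}$ and $\widetilde\Phi=\Phi^{f\!\!f}$, so the quasi-bisymplectic structure induced by the composite Calabi--Yau cospan \eqref{multfus} coincides with the one obtained by fusing the objects $1$ and $2$ as prescribed in \cref{fusionqH}. This proves \cref{1cyqbs}.
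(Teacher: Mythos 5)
Your proposal is correct and follows essentially the same route as the paper: decompose the glued homotopy into the $\cC$-side contribution (which gives $\omega^f$ via the fusion trace, as in the additive case) plus the image of the pair-of-pants homotopy $\beta_1$, and identify the latter with $-\omega_\mathrm{cor}$ so that \cref{fusionqH} finishes the argument. The computation you defer as ``the main obstacle'' is in fact a short direct calculation — the paper carries it out in a few lines, finding that $\beta_1$ maps to $-\tfrac12 x^{-1}dx\,dy\,y^{-1}$ modulo commutators, exactly as you predict — and the only cosmetic difference is that the paper phrases the final identification through the uniqueness of compatible pairs in~\cite[Theorem 7.1]{VdB2} rather than by directly equating $\widetilde\omega$ with $\omega^{f\!\!f}$.
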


\begin{proof}
Denote by $\Phi_1^f,\Phi_2^f$ the images of $x=x_1,y=x_2$ in the pushout $\cC^f$.
The extra difficulty here with respect to the proof of \cref{fusthmadd} is that the homotopy $\beta_1$ involved in the pair-of-pants cospan is nontrivial, see \cref{popex}.
This non-degenerate homotopy\[
\beta_1=\dfrac{1}{2}\Big(y^{-1}\otimes x^{-1}\otimes xy-y\otimes y^{-1}x^{-1}\otimes x\Big)\in\overline{\mathrm{HH}}_2k\langle x^{\pm1} ,y^{\pm1}\rangle\]
 is mapped in $\overline{\mathrm{DR}}^2 k\langle x^{\pm1} ,y^{\pm1}\rangle$ to \begin{align*}
\omega&=\dfrac{1}{4}\Big(y^{-1}d x^{-1}d (xy)-yd( y^{-1}x^{-1})d x\Big)\\
&=\dfrac{1}{4}\Big(-y^{-1}x^{-1}d xx^{-1}(xd y+dxy)+d yy^{-1}x^{-1}d x+x^{-1}dxx^{-1}dx\Big)\\
&=\dfrac{1}{4}\Big(-y^{-1}x^{-1}d xd y-y^{-1}x^{-1}d xx^{-1}dxy+d yy^{-1}x^{-1}d x+x^{-1}dxx^{-1}dx\Big)\\
&\equiv-\dfrac{1}{2}x^{-1}dxdyy^{-1}\quad\mod[-,-]
\end{align*}
which is mapped to\[
-\dfrac{1}{2}(\Phi_1^f)^{-1}d\Phi_1^fd\Phi^f_2(\Phi_2^f)^{-1}\in \overline{\mathrm{DR}}^2_{R^f} \cC^f.\]
The \cref{fusionqH} allows us to conclude, thanks to the uniqueness~\cite[Theorem 7.1]{VdB2} of compatibility and~\cite[Theorem 8.2.1]{VdB2}.
\end{proof}

To summarize, we have proven that the following diagram commutes, where $R^f=\oplus_{i\in I\setminus\{2\}}ke_i$ and ${\mathcal T}^f=\amalg_{i\in I\setminus\{2\}}k[x_i^{\pm1}]$.\[
\xymatrix{
{\left\{\begin{array}{c}1$-Calabi--Yau functors$ \\ {\mathcal T}\to\cC$, over $\mathcal{R},\\
$with $A_\cC$ $1$-smooth$\end{array}\right\}}\ar[rrr]^-{\text{Theorem }\ref{thmcyqbs}}\ar[dd]_-{\substack{\text{composition}\\\text{with multiplicative}\\\text{pair-of-pants}}}&&&{\left\{\begin{array}{c}$quasi-bisymplectic structures$\\$on $1$-smooth $R$-algebras$\end{array}\right\}}\ar[dd]^-{\text{fusion}}\\&&&\\
{\left\{\begin{array}{c}1$-Calabi--Yau functors$ \\ {\mathcal T}^f\to\cC^f$, over $\mathcal{R}^f,\\
$with $A_{\cC^f}$ $1$-smooth$\end{array}\right\}}\ar[rrr]^-{\text{Theorem }\ref{thmcyqbs}}&&&{\left\{\begin{array}{c}$quasi-bisymplectic structures$\\$on $1$-smooth $R^f$-algebras$\end{array}\right\}}
}\]

\subsection{Examples}\label{subsecexqq}

\subsubsection{An elementary quiver}\label{a2ex}

Consider the quiver $A_2=(V=\{1,2\},E=\{e:1\to2\})$, with orthogonal idempotents $e_1$ and $e_2$ satisfying $1=e_1+e_2$, $R=ke_1\oplus ke_2$, and set \[a_1=e_1+e^*e\text{ and }a_2=e_2+ee^*.\]
Let us denote by $A$ the localization $(k\overline{A_2})_{a_{1},a_2}$. Recall that we have given in \cite{BCS2} a relative $1$-Calabi--Yau structure on $ \Phi: k[x^{\pm1}] \to A$  defined by \[
\Phi_1(x_1)=a_1^{-1}\quad \text{and}\quad \Phi_2(x_2)=a_2.
\]
Define $\partial/\partial e$ and $\partial/\partial e^*$ in $D_RA$ by $\partial e/\partial e=e_2\otimes e_1$, $\partial e^*/\partial e=0$, $\partial e^*/\partial e^*=e_1\otimes e_2$ and $\partial e/\partial e^*=0$.

In the previous section we proved that this Calabi--Yau structure induces a quasi-bisymplectic one $\omega_1\in\overline{\mathrm{DR}}_R^2A$ on $A$. We want to prove the following.

\begin{proposition} The double quasi-Poisson bracket compatible with $\omega_1$ through~\cite[Theorem 7.1]{VdB2} is the one described in~\cite[\S 8.3]{VdB2}:
\[P=\dfrac{1}{2} \left(\left(1+ee^* \right) \dfrac{\partial}{\partial e^*} \frac{ \partial}{\partial e} -\left(1+e^*e\right) \dfrac{\partial}{\partial e}\frac{ \partial}{\partial e^*}\right) \in \left( D_RA/[D_RA, D_RA] \right)_2.\] 
\end{proposition}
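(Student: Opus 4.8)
The plan is to exploit the equivalence of \cite[Theorem 7.1]{VdB2} between quasi-bisymplectic structures and non-degenerate double quasi-Poisson brackets, which is precisely characterized by the compatibility relation $\iota(\omega)\iota(P)=1-\tfrac14 T$. Since \cref{thmcyqbs} has already produced the quasi-bisymplectic structure $\omega_1$ on $A$, and since the bracket $P$ of \cite[\S 8.3]{VdB2} is non-degenerate, it suffices to check that this \emph{one} pair $(\omega_1,P)$ satisfies the compatibility relation; the uniqueness part of \cite[Theorem 7.1]{VdB2} then forces $P$ to be the bracket compatible with $\omega_1$. Thus I would not attempt to invert $\iota(\omega_1)$, but rather verify the identity $\iota(\omega_1)\iota(P)=1-\tfrac14 T$ as an endomorphism of $\Omega_{A/R}$.

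First I would reduce this to generators. As $A$ is generated over $R$ by $e,e^*$ together with $a_1^{-1},a_2^{-1}$, the bimodule $\Omega_{A/R}$ is generated by $de$ and $de^*$, so it is enough to check the identity on these. This calls for three inputs, each evaluated on $de,de^*$: the Hamiltonian double derivations $\iota(P)(de)=H_e$ and $\iota(P)(de^*)=H_{e^*}$ read off from the explicit $P$; the correction $T(de)=[e,\Phi^{-1}d\Phi-d\Phi\Phi^{-1}]$ and $T(de^*)=[e^*,\Phi^{-1}d\Phi-d\Phi\Phi^{-1}]$, using the description of $T$ recalled just before \cref{fusionqH}; and finally the contractions $\iota_{H_e}(\omega_1)$ and $\iota_{H_{e^*}}(\omega_1)$. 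Here $\Phi=a_1^{-1}+a_2$, $\Phi^{-1}=a_1+a_2^{-1}$, $da_1=de^*\,e+e^*\,de$ and $da_2=de\,e^*+e\,de^*$, so that every one-form in sight expands, via the Leibniz rule, into explicit words in $e,e^*,a_1^{-1},a_2^{-1}$ times $de,de^*$.

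The main obstacle is the explicit determination of $\omega_1$, since the last two contractions cannot be extracted from $(\mathbb{B}2)$ alone, which only records the single value $\iota_E\omega_1=\tfrac12(\Phi^{-1}d\Phi+d\Phi\Phi^{-1})$ (equivalently the local versions $\iota_{E_i}(\omega_1)=\tfrac12(\Phi_i^{-1}d\Phi_i+d\Phi_i\Phi_i^{-1})$ with $\Phi_1=a_1^{-1}$, $\Phi_2=a_2$). To pin down $\omega_1$ itself I would transport the explicit relative Calabi--Yau structure on $\Phi\colon k[x^{\pm1}]\to A$ from \cite{BCS2} to the de Rham side through the comparison of \cref{subsechhncform}: the Hochschild homotopy realizing $\Phi(\gamma_1)\sim 0$ maps to a closed two-form solving $(\mathbb{B}2)$, and this is $\omega_1$. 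Alternatively, as \cite[\S 8.3]{VdB2} exhibits the quasi-bisymplectic form attached to $P$, one may instead show that our $\omega_1$ agrees with it in $\overline{\mathrm{DR}}_R^2A$ (both being closed and satisfying $(\mathbb{B}2)$ for the same $\Phi$), which by the uniqueness of the compatible bracket immediately yields the claim; this is likely the quickest route.

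Once $\omega_1$ is in hand, verifying $\iota(\omega_1)\iota(P)=1-\tfrac14 T$ on $de$ and $de^*$ is a direct, if lengthy, computation performed in the corner decomposition $A=\bigoplus_{i,j}e_iAe_j$; the bookkeeping is made heavier by the inverses $a_1^{-1},a_2^{-1}$ but is entirely mechanical. I expect it to mirror the structure of the proof of \cref{fusionqH}, where the contraction identities $\iota(2\omega_1)(E_i)=\Phi_i^{-1}d\Phi_i+d\Phi_i\Phi_i^{-1}$ and the Hamiltonian formula $H_a(\Phi)=-\tfrac12(\Phi E+E\Phi)(a)^\circ$ were the decisive inputs; specialising these two ingredients to $a=e,e^*$ should drive the present check and close the argument.
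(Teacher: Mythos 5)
Your main route is exactly the paper's: transport the homotopy $\beta_1$ from \cite{BCS2} to an explicit $\omega_1\in\overline{\mathrm{DR}}^2_RA$ via the comparison of \cref{subsechhncform} (with the factor $1/4$ coming from the degree operator), simplify it modulo commutators using $\Phi e\Phi=e$ and $\Phi e^*\Phi=e^*$ to get $2\omega_1\equiv\Phi\, de^*de-\Phi^{-1}de\,de^*$, and then verify $\iota(\omega_1)\iota(P)=1-\tfrac14 T$ directly on the generators $de$ and $de^*$, concluding by uniqueness in \cite[Theorem 7.1]{VdB2}. One caution on your parenthetical ``alternative'' route: closedness together with $(\mathbb{B}2)$ does not determine the $2$-form, so comparing $\omega_1$ with the form of \cite[\S 8.3]{VdB2} would still require the same explicit computation of $\omega_1$ and is not actually a shortcut.
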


Note that we use the convention regarding concatenation of paths opposite to the one in~\cite{VdB}, that is $e=e_2ee_1$.

\begin{proof}
In \cite{BCS}, one homotopy $\phi(\gamma_1) \sim 0$ is given by
\begin{align}\label{homotopyA2}\begin{split}
\beta_1&=\dfrac{1}{2}\big(e^*\otimes e\otimes \Phi+\Phi\otimes e^*\otimes e- e^*\otimes \Phi^{-1}\otimes e-\Phi^{-1}\otimes e\otimes e^*\\
&\qquad\qquad\qquad\qquad\qquad\qquad\qquad\qquad+1\otimes e^*\otimes e \Phi-1\otimes e\Phi\otimes e^*\big)\end{split}\end{align}
where $\Phi=\Phi_1(x_1)+\Phi_2(x_2)$. It yields an element ($1/4$ appears because of the degree operator)\[
\omega_1=\dfrac{1}{4}\big(e^*ded\Phi+\Phi de^*de-e^*d\Phi^{-1}de-\Phi^{-1}dede^*+de^*d(e\Phi)-d(e\Phi)de^*\big)\]
in $\overline{\mathrm{DR}}^2A= \left( \overline\Omega A/[\overline\Omega A,\overline \Omega A] \right)_2$.
We can heavily simplify this expression working modulo $[\overline\Omega A,\overline \Omega A]$. First note that (again, $dab$ stands for $(da)b$)
\begin{align*}
d\Phi&=-a_1^{-1}(de^*e+e^*de)a_1^{-1}+dee^*+ede^*=-\Phi(de^*e+e^*de)\Phi+dee^*+ede^*\\
d\Phi^{-1}&=de^*e+e^*de-a_2^{-1}(dee^*+ede^*)a_2^{-1}=de^*e+e^*de-\Phi^{-1}(dee^*+ede^*)\Phi^{-1},
\end{align*}
thus, using $\Phi e\Phi=e$ and $\Phi e^*\Phi=e^*$ (cf~\cite[(4.3)]{BCS2}),\begin{align*}
4\omega_1&=\Phi de^*de-\Phi^{-1}dede^*+e^*ded\Phi-e^*d\Phi^{-1}de+2de^*d(e\Phi)\\
&=\Phi de^*de-\Phi^{-1}dede^*-e^*de\Phi(de^*e+e^*de)\Phi\\
&\qquad+e^*\Phi^{-1}(dee^*+ede^*)\Phi^{-1}de+2de^*de\Phi-2de^*e\Phi(de^*e+e^*de)\Phi\\
&=\Phi de^*de-\Phi^{-1}dede^*-e^*de\Phi de^*e\Phi\\
&\qquad\underbrace{-e^*de\Phi e^*de\Phi+e^*\Phi^{-1} dee^*\Phi^{-1}de}_{\equiv0}+e^*\Phi^{-1}ede^*\Phi^{-1}de\\
&\qquad\qquad+2de^*de\Phi-2\underbrace{de^*e\Phi de^*e\Phi}_{\equiv0}-2de^*e\Phi e^*de\Phi\\
&\equiv3\Phi de^*de-\Phi^{-1}dede^*-e\Phi e^*de\Phi de^*+e^*\Phi^{-1}ede^*\Phi^{-1}de+2de^*e\Phi e^*de\Phi\\
&=3\Phi de^*de-\Phi^{-1}dede^*-ee^*\Phi_2^{-1}de\Phi de^*+e^*e\Phi_1 de^*\Phi^{-1}de+2de^*ee^*\Phi_2^{-1} de\Phi\\
&=3\Phi de^*de-\Phi^{-1}dede^*-de\Phi de^*+\Phi^{-1}de\Phi de^*\\
&\qquad+ de^*\Phi^{-1}de-\Phi de^*\Phi^{-1}de-2de^*de\Phi+2de^*\Phi^{-1}de\Phi\\
&\equiv2\Phi de^*de-2\Phi^{-1}dede^*.
\end{align*}
We now need to prove that $P$ and $\omega_1$ are compatible, meaning as previously that\begin{equation}\label{compT}
\iota(\omega_1)\iota(P)=1-\dfrac{1}{4}T\end{equation}
with $T(dp)=[p,\Phi^{-1}d\Phi-d\Phi\Phi^{-1}]$. For $p=e$, the LHS is\begin{align*}
\iota(\omega_1)\iota(P)(de)&=\dfrac{1}{2}\iota(\omega_1)\left(\dfrac{\partial}{\partial e^*}(1+e^*e)+(1+ee^*)\dfrac{\partial}{\partial e^*}\right)\\
&=\dfrac{1}{2}({}^\circ i_{{\partial}/{\partial e^*}}(\omega_1)(1+e^*e)+(1+ee^*){}^\circ i_{{\partial}/{\partial e^*}}(\omega_1))
\end{align*}
where\[
i_{\delta}(pdqdr)=p\delta(q)'\otimes\delta(q)''dr-pdq\delta(r)'\otimes\delta(r)''\in A\otimes\Omega^1+\Omega^1\otimes A\]
as stated earlier.
Note that above we have used, for $\pi,\nu\in A$ and $\delta\in D_{A/R}$,\begin{align*}
{}^\circ i_{\pi\delta\nu}(pdqdr)&={}^\circ(p\delta(q)'\nu\otimes\pi\delta(q)''dr-pdq\delta(r)'\nu\otimes\pi\delta(r)'')\\
&=\pi{}^\circ i_{\delta}(pdqdr)\nu
\end{align*}
since the bimodule structure on $D_{A/R}$ is induced by the inner one on $A^e$, as explained in the proof of~\cite[2.8.6]{CBEG}. We have:\begin{align*}
{}^\circ i_{{\partial}/{\partial e^*}}(2\omega_1)={}^\circ(\Phi\otimes de+\Phi^{-1}de\otimes e_2)=de\Phi+\Phi^{-1}de\end{align*}
thus\begin{align*}
4\iota(\omega_1)\iota(P)(da)&=(de\Phi+\Phi^{-1}de)(1+e^*e)+(1+ee^*)(de\Phi+\Phi^{-1}de)\\
&=2de+\Phi^{-1}de\Phi^{-1}+\Phi de\Phi
\end{align*}
whereas $4$ times the RHS of~\eqref{compT} evaluated at $de$ is\begin{align*}
4de-[e,\Phi^{-1}d\Phi-d\Phi\Phi^{-1}]&=4de-e\Phi^{-1}(-\Phi(de^*e+e^*de)\Phi+dee^*+ede^*)\\
&\qquad+e(-\Phi(de^*e+e^*de)\Phi+dee^*+ede^*)\Phi^{-1}\\
&\qquad\qquad+\Phi^{-1}(-\Phi(de^*e+e^*de)\Phi+dee^*+ede^*)e\\
&\qquad\qquad\qquad-(-\Phi(de^*e+e^*de)\Phi+dee^*+ede^*)\Phi^{-1}e\\
&=4de+ede^*e\Phi+ee^*de\Phi-e\Phi de^*e-e\Phi e^*de\\
&\qquad+\Phi^{-1}dee^*e+\Phi^{-1}ede^*e-dee^*\Phi^{-1}e-ede^*\Phi^{-1}e\\
&=4de+ee^*de\Phi-\Phi^{-1}e e^*de+\Phi^{-1}dee^*e-dee^*e\Phi\\
&=4de+\Phi de\Phi-de\Phi-de+\Phi^{-1}de\\
&\qquad+\Phi^{-1}de\Phi^{-1}-\Phi^{-1}de-de+de\Phi\\
&=2de+\Phi^{-1}de\Phi^{-1}+\Phi de\Phi
\end{align*}
as wished. Computations are similar to prove \cref{compT} evaluated at $de^*$.
\end{proof}

\subsubsection{Arbitrary quivers}\label{arbex}

Let us go back to the proof~\cite[Theorem 4.8]{BCS2} of the $1$-Calabi--Yau structure on the multiplicative moment map
$\mu_Q:\coprod_{v\in V}k[z_v^{\pm1}]  \rightarrow  k\overline {Q}_{loc} :=k\overline {Q}[(1+ee^*)^{-1}]_{e\in\overline E}$ defined by \[
z_v  \longmapsto  \prod_{e\in E\cap t^{-1}(v)}(1+ee^*)\times \prod_{e\in E\cap s^{-1}(v)}(1+e^*e)^{-1}.\]
It is done by realizing this functor as successive compositions of Calabi--Yau cospans.
Let us specify an order that better suits our purpose. As usually we denote by $Q^\mathrm{sep}$ the quiver with same edge set $E$ but vertex set $\overline E=\{v_e=s(e),v_{e^*}=t(e)\}$. It is the disjoint union of $|E|$ copies of $A_2$. We have a 1-Calabi--Yau morphism\begin{equation}\label{CYsep}
\mu_{Q^\mathrm{sep}}:\coprod_{e\in E}(k[x_e^{\pm1}]\amalg k[y_e^{\pm1}])\longrightarrow k\overline {Q^\mathrm{sep}}_{loc}
\end{equation}
given by $x_e\mapsto (e_{s(e)}+e^*e)^{-1}$ and $y_e\mapsto e_{t(e)}+ee^*$.
We know thanks to the previous section that the quasi-bisymplectic structure on $k\overline {Q^\mathrm{sep}}_{loc}$ induced by this $1$-Calabi--Yau multiplicative moment map matches the one described by Van den Bergh in~\cite{VdB2}.

We want to prove the same for $Q$ by fusing pairs of vertices $(v_e,v_f)$ any time $s(e)=s(f)$ in $\overline Q$.
Precisely, pick a finite sequence of fusion of pairs of vertices that takes us from $Q^\mathrm{sep}$ to $Q$, and consider an intermediary step $Q^\diamond$. Assume that the quasi-bisymplectic structure induced by the $1$-Calabi--Yau one on $\mu_{Q^\diamond}$ matches Van den Bergh's, and proceed to the next fusion in our sequence. Assume that we fuse $1$ and $2$ in the vertex set $I$ of $Q^\diamond$. We mean by that that we precisely proceed to the composition~\eqref{multfus}, where $\cC=k\overline{Q^\diamond}_{loc}$.
By induction and using \cref{1cyqbs} we get the following.

\begin{theorem}
The quasi-bisymplectic structure on $k\overline{Q}_{loc}$ induced by the $1$-Calabi--Yau one on $\mu_Q$ matches the one given by Van den Bergh.
\end{theorem}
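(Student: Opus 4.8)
The plan is to induct along the chosen finite sequence of fusions of pairs of vertices carrying $Q^{\mathrm{sep}}$ to $Q$. At an intermediate quiver $Q^\diamond$ I carry two structures on $k\overline{Q^\diamond}_{loc}$: the quasi-bisymplectic structure $\omega^\diamond$ produced from the $1$-Calabi--Yau functor $\mu_{Q^\diamond}$ by \cref{thmcyqbs}, and Van den Bergh's quasi-bisymplectic structure from~\cite{VdB2}. The inductive claim is that these coincide. By the uniqueness part of~\cite[Theorem 7.1]{VdB2}, a quasi-bisymplectic structure is determined by the non-degenerate double quasi-Poisson bracket compatible with it, so the claim is equivalent to matching the associated brackets — the form in which fusion is most transparent.

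For the base case $Q^{\mathrm{sep}}$, recall from~\eqref{CYsep} that the source is a coproduct of $|E|$ copies of the $A_2$ situation, and that the path algebra $k\overline{Q^{\mathrm{sep}}}_{loc}$ correspondingly decomposes into mutually orthogonal blocks indexed by $E$, over which $\Omega_R$, $D_R$ and the double brackets split as direct sums. Since both the construction of \cref{thmcyqbs} and Van den Bergh's construction respect this block decomposition, it suffices to verify the match on a single $A_2$ block. This is exactly the computation carried out in~\cref{a2ex}, where the induced $\omega_1$ is shown to be compatible with the double quasi-Poisson bracket $P$ of~\cite[\S8.3]{VdB2}.

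For the inductive step, assume the match on $Q^\diamond$ and suppose we next fuse the vertices $1$ and $2$, realised as the composition of Calabi--Yau cospans~\eqref{multfus} with $\cC=k\overline{Q^\diamond}_{loc}$. By \cref{1cyqbs}, the structure that \cref{thmcyqbs} assigns to the fused functor $k[z^{\pm1}]\amalg\mathcal T_{\ge3}\to\cC^f$ is precisely the fusion $\omega^{f\!\!f}$ of $\omega^\diamond$ of \cref{fusionqH}. Now \cref{fusionqH} shows that $\omega^{f\!\!f}$ is compatible with $P^{f\!\!f}=P^f+\frac12 E_1^f E_2^f$, which is Van den Bergh's fusion of the compatible double quasi-Poisson bracket (\cite[\S5.3]{VdB} and~\cite[Theorem 8.2.1]{VdB2}). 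Since by the inductive hypothesis $\omega^\diamond$ is the quasi-bisymplectic dual of Van den Bergh's bracket on $Q^\diamond$, and Van den Bergh's structure on the next quiver is obtained from it by exactly this fusion, the uniqueness of~\cite[Theorem 7.1]{VdB2} forces $\omega^{f\!\!f}$ to be Van den Bergh's structure there as well. Iterating over the whole sequence gives the statement for $k\overline{Q}_{loc}$.

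The genuinely new analytic input — the fusion formula and its compatibility with the duality of~\cite[Theorem 7.1]{VdB2} — has already been isolated in \cref{fusionqH}, and the base case in~\cref{a2ex}, so the remaining difficulty is bookkeeping rather than computation. The main point to check is that the chosen sequence of pair-of-pants compositions~\eqref{multfus} faithfully models the combinatorics of recovering $Q$ from $Q^{\mathrm{sep}}$: each binary fusion multiplies two moment-map generators via $z\mapsto xy$, and these products must accumulate correctly into the factors $\prod_{t(e)=v}(1+ee^*)\,\prod_{s(e)=v}(1+e^*e)^{-1}$ of $\mu_Q$ at each vertex $v$, with the idempotent identifications and the choices of $\Phi_i^f$ matching across consecutive steps.
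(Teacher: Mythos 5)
Your proposal follows essentially the same route as the paper: the paper also reduces to the disjoint union of $A_2$'s via $Q^{\mathrm{sep}}$ (base case handled by the computation in \cref{a2ex}), then inducts along a chosen sequence of binary vertex fusions, with each step realised as the pair-of-pants composition~\eqref{multfus} and concluded by \cref{1cyqbs} together with \cref{fusionqH} and the uniqueness statement of~\cite[Theorem 7.1]{VdB2}. The extra remarks you add (the block decomposition at the base case and the bookkeeping of how the $z\mapsto xy$ products accumulate into the factors of $\mu_Q$) are consistent with, and slightly more explicit than, what the paper records.
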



\section{Representation spaces}

As before assume that $A$ is a $1$-smooth $R$-algebra with $R=\oplus_{i\in I}ke_i$ where the $e_i$ are pairwise orthogonal idempotents and $I:=\{1, \cdots, n\}$. For any $I$-graded finite dimensional space $V$ define $A_V$  by\[
\Hom_{\mathrm{Alg}/R}(A,\mathrm{End}(V))=\Hom_{\mathrm{CommAlg}/k}(A_V,k).
\]
Thanks to~\cite[(6.2.2)]{CBEG}, setting $X_V=\mathrm{Spec}(A_V)$, we have a map\begin{equation}\label{deftra}
\underline{\mathrm{tr}}:\mathrm{DR}^* A\longrightarrow \Omega^*( X_V)^{\mathrm{GL}_V}\end{equation}
given by $\alpha\mapsto\mathrm{tr}(\hat\alpha)$ where $\hat\alpha$ is induced by the evaluation\[
A\rightarrow(A_V\otimes \mathrm{End}(V))^{\mathrm{GL}_V}\quad;\quad a\mapsto\hat a.\]

Thanks to~\cite[Proposition 6.1]{VdB2}, there is a quasi-Hamiltonian structure on $(X_V,\underline{\mathrm{tr}}(\omega),\hat\Phi)$ when $(A,\omega,\Phi)$ is quasi-bisymplectic. Now $\hat\Phi:X_V\to \mathrm{GL}_V$ induces a lagrangian structure on $[X_V/\mathrm{GL}_V]\to[\mathrm{GL}_V/\mathrm{GL}_V]$. 

On the other hand, thanks to~\cite{BD2}, if $\Phi$ carries a $1$-Calabi--Yau structure, it yields a lagrangian structure on $\mathrm{Perf}_A\to\mathrm{Perf}_{k[x^{\pm1}]}$, and thus considering substacks on $[X_V/\mathrm{GL}_V]\to[\mathrm{GL}_V/\mathrm{GL}_V]$ again.

In both cases, we know that the induced $1$-shifted symplectic structure on $[\mathrm{GL}_V/\mathrm{GL}_V]$ is the standard one, thanks to~\cite[§5.1]{BCS2} for the latter. 

Now assume that the $1$-Calabi--Yau structure on $\Phi$ induces the quasi-bisymplectic structure $(A,\omega,\Phi)$, that is $\omega_1$ in the proof of \cref{thmcyqbs} is $\omega$.
The current section is devoted to the proof of the following.

\begin{theorem}\label{complagthm}
These two lagrangian structures are identical.
\end{theorem}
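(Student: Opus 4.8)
The plan is to exhibit both lagrangian structures as the image, under the moduli--of--objects functor $\mathrm{Perf}$, of one and the same piece of noncommutative data---the relative $1$-Calabi--Yau structure on $\Phi$---and to deduce that they coincide. Since the two structures are defined on the same morphism $[X_V/\mathrm{GL}_V]\to[\mathrm{GL}_V/\mathrm{GL}_V]$ and, as recalled above, both induce the standard $1$-shifted symplectic structure on the target, comparing them reduces to comparing the isotropic data on $[X_V/\mathrm{GL}_V]$: the underlying $2$-form together with the homotopy trivialising the pullback $\hat\Phi^*\omega_{[\mathrm{GL}_V/\mathrm{GL}_V]}$ in the complex of closed $2$-forms of degree $1$.

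The key tool is a cyclic enhancement of the trace map~\eqref{deftra}. The $\mathrm{GL}_V$-equivariant evaluation $A\to(A_V\otimes\mathrm{End}(V))^{\mathrm{GL}_V}$ intertwines the noncommutative de Rham differential $d$ with the de Rham differential on $X_V$, and the contraction $\iota_E$ with contraction by the fundamental vector field of the infinitesimal $\mathrm{GL}_V$-action; it therefore upgrades to a morphism from the negative cyclic complex $\big(\overline{\DR}^*_R A[\![u]\!],\iota_E-ud\big)$ to the Cartan (equivariant de Rham) complex computing closed forms on the stack $[X_V/\mathrm{GL}_V]$. This is precisely the concrete incarnation of $\mathrm{Perf}$ on the complexes computing closed forms, and by construction it is compatible with the Ginzburg--Schedler identifications of \cref{subsechhncform} and with the explicit class of \cref{1cypm}. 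In particular it sends the degree-zero component $\omega_1$ of the Calabi--Yau class to $\underline{\mathrm{tr}}(\omega_1)$, and the full homotopy $\Phi(\gamma)$ assembled from $\gamma=\sum_k u^k\gamma_{k+1}$ to a homotopy trivialising $\hat\Phi^*\omega_{[\mathrm{GL}_V/\mathrm{GL}_V]}$.

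With this in hand, the first (Brav--Dyckerhoff) lagrangian structure is, by the very definition of $\mathrm{Perf}$ applied to relative Calabi--Yau structures~\cite{BD2}, the image of this data: its underlying $2$-form is $\underline{\mathrm{tr}}(\omega_1)$ and its isotropic homotopy is the image of $\Phi(\gamma)$. On the other hand, the second lagrangian structure is built from Van den Bergh's quasi-Hamiltonian space $(X_V,\underline{\mathrm{tr}}(\omega),\hat\Phi)$ via~\cite{Cal,Saf}; its underlying $2$-form is $\underline{\mathrm{tr}}(\omega)$ and its isotropic homotopy is the canonical one, à la Alekseev--Malkin--Meinrenken, determined by the quasi-Hamiltonian $2$-form and the moment map. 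Since by hypothesis $\omega=\omega_1$ (that is, the form $\omega_1$ produced in the proof of \cref{thmcyqbs} is the quasi-bisymplectic form), the two underlying $2$-forms coincide, and it only remains to identify the two isotropic homotopies.

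This identification is the main obstacle, since the two homotopies are produced by genuinely different means---one derived, through $\mathrm{Perf}$, the other classical, through the Alekseev--Malkin--Meinrenken construction. I would settle it by tracing both through the equivariant cyclic trace map and matching them order by order. The point is that the quasi-Hamiltonian axioms satisfied by $(\underline{\mathrm{tr}}(\omega),\hat\Phi)$, which govern the Alekseev--Malkin--Meinrenken trivialisation, are exactly $\underline{\mathrm{tr}}$ applied to the conditions $(\mathbb{B}1)$--$(\mathbb{B}3)$; and these were shown, in the proof of \cref{thmcyqbs}, to be precisely the successive truncations of the single negative cyclic identity $(\iota_E-ud)\big(\sum_k u^k\omega_{k+1}\big)=\Phi(\gamma)$. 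Thus the higher forms $\underline{\mathrm{tr}}(\omega_2),\underline{\mathrm{tr}}(\omega_3),\dots$ supply exactly the higher coherences required by the Alekseev--Malkin--Meinrenken homotopy, and applying $\underline{\mathrm{tr}}$ term by term identifies the latter with the image of $\Phi(\gamma)$. Combined with the equality of the underlying $2$-forms and of the standard structures on the target, this shows that the two lagrangian structures are identical.
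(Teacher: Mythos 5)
Your reduction of the problem to comparing the isotropic data on $[X_V/\mathrm{GL}_V]$, and your identification of the two underlying $2$-forms via the trace map $\underline{\mathrm{tr}}$ (through the chain $\overline{\mathrm{HH}}A\to\overline{\mathrm{HH}}(\mathrm{Mod}^{\mathrm{perf}}_{A_V})\leftarrow\overline{\mathrm{HH}}(\mathrm{End}_{A_V}(M))\to\overline{\mathrm{HH}}A_V$ induced by the bimodule $M$ underlying $X_V\to\mathrm{Perf}_A$), agree with what the paper does. But your last step --- ``it only remains to identify the two isotropic homotopies \dots\ I would settle it by tracing both through the equivariant cyclic trace map and matching them order by order'' --- is where the argument has a genuine gap. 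You assert that the higher forms $\underline{\mathrm{tr}}(\omega_2),\underline{\mathrm{tr}}(\omega_3),\dots$ ``supply exactly the higher coherences required by the Alekseev--Malkin--Meinrenken homotopy,'' but this is precisely the nontrivial comparison, and nothing in your write-up establishes it; the two homotopies arise from entirely different constructions ($\mathrm{Perf}$ applied to a negative cyclic class on one side, the classical quasi-Hamiltonian trivialisation on the other), and an order-by-order match in $u$ would require an explicit compatibility between the Ginzburg--Schedler mixed complex and the weight-graded Cartan model that you have not proved.

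The paper sidesteps this entirely by showing that there is nothing to compare: computing the weight $\geq 2$ part of the de Rham complex of $[X/G]$, it proves that the space of homotopies between $[\mu]^*\underline\omega$ and $0$ in $\mathcal A^{2,\mathrm{cl}}([X/G],1)$ is \emph{discrete}, and is identified with the set of invariant $2$-forms $\alpha\in\Omega^2(X)^G$ satisfying the moment condition~\eqref{multmo} together with $d_{\mathrm{dR}}\alpha=\mu^*\underline\omega_1$. Consequently a lagrangian structure on $[\mu]$ carries no higher homotopical data beyond its underlying $2$-form on $X$, and two lagrangian structures coincide if and only if those $2$-forms agree --- which you have already checked. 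So the correct completion of your argument is not to match the homotopies term by term, but to prove they live in a discrete space; without that (or an actual verification of your coherence claim) the proof is incomplete.
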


\subsection{Lagrangian morphisms and quasi-hamiltonian spaces}

Let $X$ be a smooth algebraic variety. Since we will apply the following results to $X=X_V$ we assume $X$ to be affine for simplicity but these results 
can be extended to the non-affine case. Assume that a reductive group $G$ acts on $X$ and consider a $G$-equivariant morphism $\mu:X\to{G}$, 
which induces $[\mu]:[X/G]\to[G/G]$. Consider the standard 1-shifted symplectic structure on $[G/G]$ given by 
$\underline\omega=\underline\omega_0+\underline\omega_1$ where $\underline\omega_0\in(\Omega^1(G)\otimes \mathfrak g^*)^G$ and 
$\underline\omega_1\in\Omega^3(G)^G$.

We refer to~\cite[§3]{BCS} for a precise definition of the space $\mathcal A^{p,(\mathrm{cl})}(X,n)$ of (closed) $p$-forms of degree $n$ on $X$.
When $\alpha \in \Omega^2(X)^G$, we say that $(\alpha,\mu)$ satisfy the multiplicative moment condition if \begin{equation}\label{multmo}
\forall u\in\mathfrak g,~~i_{\vec u}\alpha=\langle\mu^*\underline\omega_0,u\rangle.\tag{$\mathbb M$}\end{equation}
This is condition (B2) in~\cite{VdB2}.

\begin{lemma}
The space of homotopies between $[\mu]^*\underline\omega_0$ and $0$ in $\mathcal A^{2,\mathrm{cl}}([X/G],1)$ is discrete. It is the space of invariant 
$2$-forms $\alpha\in\Omega^2(X)^G$ satisfying~\eqref{multmo}.
\end{lemma}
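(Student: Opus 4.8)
The plan is to compute the space $\mathcal{A}^{2,\mathrm{cl}}([X/G],1)$ through a completely explicit model and to read off both assertions from a degree count. Since $X$ is affine and $G$ is reductive, I would model the weight-graded de Rham complex of $[X/G]$ (as in~\cite[\S3]{BCS}) by the Cartan--Koszul bicomplex whose $(p,q)$-term is
\[
V^{p,q}:=\big(\Omega^{p-q}(X)\otimes\mathrm{Sym}^q\mathfrak{g}^*\big)^G,\qquad 0\le q\le p,
\]
arising from $\wedge^p$ of the cotangent complex $\mathbb{L}_{[X/G]}$, which restricts on $X$ to the two-term complex $[\Omega^1_X\to\mathfrak{g}^*\otimes\mathcal{O}_X]$ placed in degrees $0$ and $1$. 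The two differentials are the internal Koszul differential $\delta\colon V^{p,q}\to V^{p,q+1}$ given by contraction with the fundamental vector fields, $\delta(\beta\otimes P)=\sum_a i_{\vec e_a}\beta\otimes e^a P$ for a basis $\{e_a\}$ of $\mathfrak{g}$ with dual basis $\{e^a\}$, and the de Rham differential $d_{\mathrm{dR}}\colon V^{p,q}\to V^{p+1,q}$ acting on the $\Omega^{p-q}(X)$ factor. I assign $V^{p,q}$ the total degree $q+p-2$, so that a closed $2$-form of degree $1$ has its leading term in $V^{2,1}=(\Omega^1(X)\otimes\mathfrak{g}^*)^G$, which is exactly where $[\mu]^*\underline\omega_0$ lives.

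The discreteness is then immediate: because every term satisfies $p\ge2$ and $q\ge0$, one has $q+p-2\ge0$, so the total complex vanishes in negative total degrees. In the Dold--Kan presentation of $\mathcal{A}^{2,\mathrm{cl}}([X/G],1)$ the $0$-simplices are the total-degree-$1$ elements, the $1$-simplices (homotopies) are the total-degree-$0$ elements, and $2$-simplices would be total-degree-$(-1)$ elements; as there are none, any space of homotopies between two $0$-cells is $0$-truncated, i.e.\ discrete. For the identification I would note that the only total-degree-$0$ term is $V^{2,0}=\Omega^2(X)^G$, so a homotopy is precisely an invariant $2$-form $\alpha$, whose boundary is $D\alpha=\delta\alpha+d_{\mathrm{dR}}\alpha\in V^{2,1}\oplus V^{3,0}$. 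Matching the $V^{2,1}$-component of $D\alpha$ with $[\mu]^*\underline\omega_0$ gives $\delta\alpha=[\mu]^*\underline\omega_0$, and pairing with $u\in\mathfrak{g}$ yields $\langle\delta\alpha,u\rangle=\sum_a\langle e^a,u\rangle\, i_{\vec e_a}\alpha=i_{\vec u}\alpha$, so that this equation is exactly the multiplicative moment condition~\eqref{multmo}.

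The point that requires care --- and which I expect to be the main obstacle --- is the interaction with the de Rham part of the complex. First, one must verify that $[\mu]^*\underline\omega_0$ is a legitimate endpoint, i.e.\ $\delta$-closed; this I would obtain by pulling back the relation $\delta\underline\omega_0=0$ on $[G/G]$, which expresses that $\underline\omega_0$ is the leading term of the standard $1$-shifted symplectic form. Second, and more delicately, the $V^{3,0}$-component $d_{\mathrm{dR}}\alpha$ of the boundary has to be reconciled with the $3$-form datum $\underline\omega_1\in\Omega^3(G)^G$: matching the conventions of~\cite{BCS} and~\cite{VdB2}, one must check that this component is exactly accounted for by $[\mu]^*\underline\omega_1$ and hence imposes no surviving constraint on $\alpha$ beyond~\eqref{multmo} --- it is the differential-form counterpart of Van den Bergh's condition~($\mathbb{B}$1). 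Once the normalisations of $\delta$ and of the pairing $\langle-,-\rangle$ are pinned down so that these two checks go through, the homotopies from $[\mu]^*\underline\omega_0$ to $0$ form precisely the affine space of invariant $2$-forms $\alpha\in\Omega^2(X)^G$ satisfying~\eqref{multmo}, as claimed.
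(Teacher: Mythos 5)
Your degree count and your identification of the contraction differential (your $\delta$, the paper's $\partial$) with $u\mapsto i_{\vec u}\alpha$ reproduce exactly the paper's argument: the relevant complex sits in non-negative total degrees, so the homotopy space is discrete, and a homotopy from $[\mu]^*\underline\omega_0$ to $0$ is an $\alpha\in\Omega^2(X)^G$ whose boundary in $(\Omega^1(X)\otimes\mathfrak g^*)^G$ equals $[\mu]^*\underline\omega_0$, which is condition~\eqref{multmo}. Up to that point your proof is correct and coincides with the paper's.

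Your final paragraph, however, contains a genuine error. You claim that the $V^{3,0}$-component $d_{\mathrm{dR}}\alpha$ of the boundary is ``exactly accounted for by $[\mu]^*\underline\omega_1$ and hence imposes no surviving constraint on $\alpha$ beyond~\eqref{multmo}''. This is false: the equation $d_{\mathrm{dR}}\alpha=\mu^*\underline\omega_1$ is an honest additional constraint --- it is Van den Bergh's condition $(\mathbb{B}1)$, and condition~\eqref{multmo} does not determine $d_{\mathrm{dR}}\alpha$ (two solutions of~\eqref{multmo} differ by a basic $2$-form, whose de Rham differential can be arbitrary). If one really worked in the space of \emph{closed} $2$-forms with target the full $[\mu]^*\underline\omega=[\mu]^*(\underline\omega_0+\underline\omega_1)$, the correct answer would be ``\eqref{multmo} together with $d_{\mathrm{dR}}\alpha=\mu^*\underline\omega_1$'', which is precisely the content of the \emph{next} lemma in the paper, not of this one. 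The resolution is that the present lemma concerns only the underlying, not necessarily closed, $2$-form: $[\mu]^*\underline\omega_0$ by itself does not define a point of the closed-forms space, and the relevant complex is the single weight-$2$ column $\Omega^2(X)^G\to(\Omega^1(X)\otimes\mathfrak g^*)^G\to(\mathcal O(X)\otimes S^2\mathfrak g^*)^G$ carrying only the contraction differential. There is then no $V^{3,0}$-term at all, no de Rham component to reconcile, and the homotopies are exactly the $\alpha$ satisfying~\eqref{multmo}. So you should either discard the rows $p\geq 3$ of your bicomplex (i.e.\ work in $\mathcal A^2([X/G],1)$ rather than $\mathcal A^{2,\mathrm{cl}}([X/G],1)$), or else prove the closed statement for $\underline\omega$ and include the extra equation $d_{\mathrm{dR}}\alpha=\mu^*\underline\omega_1$ in the conclusion; asserting that this equation holds automatically is not an available option.
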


\begin{proof}
The \textit{cochain} complex of $2$-forms on $[X/G]$ is given by 
\[
\xymatrix{
\Omega^2(X)^G\ar[r]^-\partial&(\Omega^1(X)\otimes \mathfrak g^*)^G\ar[r]&(\mathcal O(X)\otimes S^2 \mathfrak g^*)^G}.
\]
The result follows from the fact that, by definition, $\partial$ is given by $\langle\partial\alpha,u\rangle=i_{\vec u}\alpha$ for every $u\in\mathfrak g$.
\end{proof}

This can be extended to the following, where we recognize the extra condition (B1) of~\cite{VdB2}.

\begin{lemma}
The space of homotopies between $[\mu]^*\underline\omega$ and $0$ in $\mathcal A^{2,\mathrm{cl}}([X/G],1)$ is discrete. It is the space of $2$-forms 
$\alpha\in\Omega^2(X)^G$ satisfying~\eqref{multmo} and
\begin{align*}
d_{\mathrm{dR}}\alpha&=\mu^*\underline\omega_1.
\end{align*}
\end{lemma}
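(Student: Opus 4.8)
The plan is to upgrade the preceding lemma---which only trivializes the weight component $\underline\omega_0$---to the full $1$-shifted form $\underline\omega=\underline\omega_0+\underline\omega_1$, where $\underline\omega_1\in\Omega^3(G)^G$ is the Cartan $3$-form. First I would recall, following~\cite[§3]{BCS}, the explicit cochain model for $\mathcal A^{2,\mathrm{cl}}([X/G],1)$: it is the totalization over the de Rham weight of the complexes computing $(2{+}i)$-forms of degree $1$ on $[X/G]$, glued by the de Rham differential $d_{\mathrm{dR}}$ (which raises the weight) and by the Chevalley--Eilenberg differential $\partial$ already used in the preceding lemma, the latter being characterized by $\langle\partial\alpha,u\rangle=i_{\vec u}\alpha$. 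Since $X$ is smooth affine and $G$ is reductive, taking $G$-invariants is exact, and in low cohomological degree this total complex reads
\[
\Omega^2(X)^G \xrightarrow{\ \alpha\mapsto(\partial\alpha,\,d_{\mathrm{dR}}\alpha)\ } \bigl(\Omega^1(X)\otimes\mathfrak g^*\bigr)^G\oplus\Omega^3(X)^G \longrightarrow \cdots,
\]
with the left term sitting in degree $0$ and the middle term in degree $1$.

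Next I would identify $[\mu]^*\underline\omega$ as the degree-$1$ cocycle whose two coordinates are $[\mu]^*\underline\omega_0\in\bigl(\Omega^1(X)\otimes\mathfrak g^*\bigr)^G$ and $\mu^*\underline\omega_1\in\Omega^3(X)^G$. A homotopy between $[\mu]^*\underline\omega$ and $0$ is then a degree-$0$ element, i.e.\ a $2$-form $\alpha\in\Omega^2(X)^G$, whose differential equals $[\mu]^*\underline\omega$; unwinding the two coordinates of the differential displayed above yields exactly $\partial\alpha=[\mu]^*\underline\omega_0$ and $d_{\mathrm{dR}}\alpha=\mu^*\underline\omega_1$. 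The first equation is precisely~\eqref{multmo} by the computation of the preceding lemma, and the second is the asserted condition. Consistency is automatic: $d_{\mathrm{dR}}\mu^*\underline\omega_1=\mu^*d_{\mathrm{dR}}\underline\omega_1=0$ because $\underline\omega_1$ is closed, and no weight-$\geq 4$ term can appear since $\underline\omega$ on $[G/G]$ has none, so the higher terms of the de Rham tower may be taken to vanish and impose no further conditions. Conversely, any $\alpha$ satisfying both equations assembles into such a homotopy.

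Finally, discreteness follows exactly as in the preceding lemma: the homotopy groups of the space of homotopies are the cohomology groups of the total complex in strictly negative degrees, which vanish because the complex is concentrated in nonnegative degrees (there are no forms of negative weight on $X$, and $G$-invariants are exact). Hence the space of homotopies is a set, namely $\{\alpha\in\Omega^2(X)^G : \alpha\ \text{satisfies}\ \eqref{multmo}\ \text{and}\ d_{\mathrm{dR}}\alpha=\mu^*\underline\omega_1\}$, as claimed. The main obstacle I anticipate is bookkeeping the total complex of the stack $[X/G]$ correctly---placing $\underline\omega_0$ and $\underline\omega_1$ in the same total degree despite their different weights, and verifying that the higher de Rham tower neither creates extra homotopies (discreteness) nor imposes conditions beyond the two stated (truncation of $\underline\omega$).
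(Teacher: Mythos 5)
Your proposal is correct and follows essentially the same route as the paper: both identify the relevant piece of the de Rham bicomplex of $[X/G]$ in weight $\geq 2$, observe that a homotopy from $[\mu]^*\underline\omega$ to $0$ is a degree-zero element $\alpha\in\Omega^2(X)^G$ whose total differential $\partial\alpha\oplus d_{\mathrm{dR}}\alpha$ hits $[\mu]^*\underline\omega_0\oplus\mu^*\underline\omega_1$, and note that discreteness holds because the total complex has nothing in negative degree. Your extra remarks on the vanishing of higher-weight conditions and the closedness of $\underline\omega_1$ are consistent with (and slightly more explicit than) the paper's argument.
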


\begin{proof}
The de Rham (cochain) complex of $[X/G]$ in weight $\ge2$ is the total (cochain) complex of the bicomplex
\[
\xymatrix{
&&&\\
\Omega^3(X)^G\ar@{--}[u]\ar[r]&(\Omega^2(X)\otimes \mathfrak g^*)^G\ar@{--}[u]\ar[r]&(\Omega^2(X)\otimes  S^2\mathfrak g^*)^G\ar@{--}[u]\ar[r]&(\mathcal O(X)\otimes S^3 \mathfrak g^*)^G\ar@{--}[u]\\
\Omega^2(X)^G\ar[u]^-{d_{\mathrm{dR}}}\ar[r]^-\partial&(\Omega^1(X)\otimes \mathfrak g^*)^G\ar[u]\ar[r]&(\mathcal O(X)\otimes S^2 \mathfrak g^*)^G\ar[u].&
}
\]
The space of 2-forms $\alpha\in\Omega^2(X)^G$ mapped on $\mu^*\omega\in\Omega^3(X)^G\oplus (\Omega^1(X)\otimes \mathfrak g^*)^G$ 
by $d_{\mathrm{dR}}\oplus\partial$ has the expected description.
\end{proof}

Now thanks to~\cite{PTVV}, the non-degeneracy condition (that is (B3) in~\cite{VdB2}) defines an union of connected components in the space of (closed) $2$-forms.
Therefore we have the following result (which is already implicit in \cite{Cal,Saf}). 
\begin{theorem}
The space of lagrangian structures on $[\mu]$ is discrete; it is the set of $2$-forms $\alpha\in\Omega^2(X)^G$ such that~\eqref{multmo}.
\end{theorem}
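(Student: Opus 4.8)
The plan is to read the statement off from the two preceding lemmas together with the non-degeneracy result of~\cite{PTVV}. Recall that, by definition, a lagrangian structure on $[\mu]\colon[X/G]\to[G/G]$ relative to the standard $1$-shifted symplectic form $\underline\omega$ consists of an \emph{isotropic} structure---a homotopy between $[\mu]^*\underline\omega$ and $0$ in $\mathcal A^{2,\mathrm{cl}}([X/G],1)$---subject to a \emph{non-degeneracy} condition on the induced map of tangent complexes. So the first step is to describe the space of isotropic structures, and the second is to impose non-degeneracy.

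For the first step I would invoke the previous lemma directly: the space of homotopies between $[\mu]^*\underline\omega$ and $0$ is discrete, and is exactly the set of invariant $2$-forms $\alpha\in\Omega^2(X)^G$ satisfying the multiplicative moment condition~\eqref{multmo} together with $d_{\mathrm{dR}}\alpha=\mu^*\underline\omega_1$. Thus the ambient space of isotropic structures is already equivalent to a (discrete) set of $2$-forms, with no higher homotopy. For the second step I would use that, by~\cite{PTVV}, the non-degeneracy condition defines a union of connected components inside the space of closed $2$-forms: it is simultaneously open and closed. Intersecting a union of connected components with a discrete space simply selects a subset of its points. Consequently the space of lagrangian structures on $[\mu]$ is again discrete, and it is cut out inside the forms satisfying~\eqref{multmo} by the remaining two requirements.

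The final bookkeeping step is to recognize these requirements as Van den Bergh's conditions: \eqref{multmo} is exactly $(\mathbb B2)$, the equality $d_{\mathrm{dR}}\alpha=\mu^*\underline\omega_1$ is the image of $(\mathbb B1)$ under the trace map~\eqref{deftra}, and the non-degeneracy of the lagrangian is precisely the surjectivity statement $(\mathbb B3)$. Matching these up is what makes the discrete set of lagrangian structures coincide with the quasi-bisymplectic forms on $A$ evaluated on $X=X_V$.

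The hard part will be this last identification, namely checking that the~\cite{PTVV} non-degeneracy of the isotropic structure---cartesianness of the relative cotangent square for $[\mu]$---translates into the surjectivity condition $(\mathbb B3)$ on $X$. I would handle this by unwinding the tangent complex of $[X/G]\to[G/G]$ at a representation, expressing it in terms of $D_{A/R}$, $\Omega_{A/R}$ and $Ad\Phi A$ through the map $\iota(\omega)$ and the trace~\eqref{deftra}, and then comparing with the map appearing in $(\mathbb B3)$; this is where the $1$-smoothness of $A$ enters, allowing the passage to the short resolutions already exploited in the proof of~\cref{thmcyqbs}. Everything else in the argument is formal, reducing to the discreteness supplied by the earlier lemmas and the open-and-closed character of non-degeneracy.
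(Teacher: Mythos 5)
Your argument is correct and matches the paper's: the two preceding lemmas give discreteness of the space of isotropic structures together with its identification as a set of invariant $2$-forms, and the fact from~\cite{PTVV} that non-degeneracy cuts out a union of connected components reduces the lagrangian condition to selecting a subset of a discrete set. The identification of non-degeneracy with Van den Bergh's condition $(\mathbb B3)$, which you flag as the hard remaining step, is not actually needed for this statement --- the paper only records that correspondence parenthetically --- so your proof is already complete without it.
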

In particular, the space of lagrangian structures on $[\mu]$ (or, equivalently, the set of quasi-hamiltonian structures on $X$ with group valued moment map $\mu$) 
is a subset of $\Omega^2(X)$. 
\begin{corollary}
Two lagrangian structures on $[\mu]$ coincide if and only if the associated $2$-forms on $X$ are the same. 
\end{corollary}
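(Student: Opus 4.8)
The plan is to read off the statement from the theorem just proved, which exhibits the space of lagrangian structures on $[\mu]$ as a discrete space identified with a concrete set of $2$-forms on $X$. Since that theorem does the real work, the corollary amounts to recording that passing to the underlying $2$-form is an \emph{injective} assignment.

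First I would make precise what ``the associated $2$-form'' is. By the lemma identifying homotopies between $[\mu]^*\underline\omega$ and $0$ in $\mathcal A^{2,\mathrm{cl}}([X/G],1)$, such a homotopy is \emph{the same datum} as an invariant form $\alpha\in\Omega^2(X)^G$ satisfying the multiplicative moment condition~\eqref{multmo} together with $d_{\mathrm{dR}}\alpha=\mu^*\underline\omega_1$, the correspondence being a bijection precisely because that space of homotopies is discrete. Imposing non-degeneracy~\cite{PTVV} selects a union of connected components of this space, i.e.\ a subset, and this subset is by definition the space of lagrangian structures on $[\mu]$; the $2$-form attached to a lagrangian structure is simply its image under this identification. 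In particular the map sending a lagrangian structure to its underlying $2$-form is injective.

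The corollary follows at once. One implication is formal: equal lagrangian structures have equal underlying forms. For the converse, if two lagrangian structures share the same associated $2$-form $\alpha$, then by the injectivity just recorded they are equal as points of the (discrete) space of lagrangian structures, and hence coincide. The only subtlety to keep in mind is that, working homotopically, ``coincide'' means equality in $\pi_0$ of the space of lagrangian structures; discreteness of that space is exactly what guarantees this is detected by the single form $\alpha$, so there is no genuine obstacle beyond invoking the discreteness already established in the theorem above.
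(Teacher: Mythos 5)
Your proposal is correct and matches the paper's own (implicit) argument: the preceding theorem exhibits the space of lagrangian structures on $[\mu]$ as a discrete subset of $\Omega^2(X)$, so the passage to the associated $2$-form is injective and the corollary is immediate. Your extra remark that discreteness is exactly what lets $\pi_0$-equality be detected by the single form $\alpha$ is the right point to emphasize, and nothing more is needed.
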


\begin{remark}
Here is how we understand geometrically the $2$-form on $X$ we get from an $\alpha$ satisfying~\eqref{multmo}. The pull-back of $\underline\omega_0$ 
along the quotient $G\to[G/G]$ is zero. As $[\mu]^*\underline\omega_0\sim0$ via $\alpha$, we get a self-homotopy of $0$ in the space $2$-forms of degree 
$1$ on the fiber product
\[
[X/G]\underset{[G/G]}{\times}G\simeq X.
\]
Such a self-homotopy is a $2$-form of degree $0$ on $X$, which is nothing but $\alpha$.
\end{remark}

\subsection{Identifying two lagrangian structures: proof of \cref{complagthm}}

Consider the composition \[
\mathrm{Spec}(A_V)=X_V\twoheadrightarrow[X_V/\mathrm{GL}_V]\hookrightarrow\mathrm{Perf}_A.\]
 It is given by an $A-A_V$-bimodule $M$ which induces a chain\[
\xymatrix{
\overline{\mathrm{HH}}A\ar@/_1.7pc/[rr]_-{a\mapsto \hat a}\ar[r]&\overline{\mathrm{HH}}(\mathrm{Mod}_{A_V}^\text{perf})&\overline{\mathrm{HH}}(\mathrm{End}_{A_V}(M))\ar[r]^-{\mathrm{tr}}\ar[l]_-{\sim}&\overline{\mathrm{HH}}A_V\simeq\Omega^* A_V}
\]
given by\[
a_0\otimes a_1\otimes\dots\otimes a_n\mapsto\mathrm{tr}(\hat a_0)d\mathrm{tr}(\hat a_1)\dots d\mathrm{tr}(\hat a_n),\]
that is $\underline{\mathrm{tr}}$ again, \textit{cf}~\eqref{deftra}. Thus the $2$-forms match on $X_V$, and therefore the associated lagrangian structures as well thanks to the previous subsection.

\begin{example}
\begin{enumerate}
\item[(i)]
Let us get back to \cref{a2ex}, where $A$ is a localization of the path algebra of the $A_2$ quiver and $\Phi$ denotes the associated multiplicative moment map. Thanks to the computations in \cref{a2ex}, \cref{complagthm} applies and the $1$-Calabi--Yau structure on $\Phi$ exhibited in~\cite{BCS2} induces the same lagrangian structure on 
\[
\big[\hat\Phi\big]:\big[\mathrm{Rep}(A,\vec n)/GL_{\vec n}\big]{\longrightarrow}\big[GL_{\vec n}/GL_{\vec n}\big],
\]
for some dimension vector $\vec n=(n_1,n_2)$, as the one induced by Van den Bergh's quasi-Hamiltonian $GL_{\vec n}$-structure in~\cite{VdB2}.
\item[(ii)] Similarly, using \cref{arbex}, we finally prove the conjecture raised in~\cite[\S5.3]{BCS2} which is the identical statement for an arbitrary quiver $Q$.
\end{enumerate}
\end{example}

\end{document}